\let\origsection=\section \def\section{\@ifstar{\origsection*}{\mysection}}
\def\mysection{\@startsection{section}{1}\z@{.7\linespacing\@plus\linespacing}{.5\linespacing}{\normalfont\scshape\centering\S}}
\renewcommand{\PrintDOI}[1]{\doi{#1}}
\numberwithin{equation}{section}
\numberwithin{figure}{section}
\def\rmlabel{\upshape({\itshape \roman*\,})}
\def\alabel{\upshape({\itshape \alph*\,})}
\let\polishlcross=\l
\def\l{\ifmmode\ell\else\polishlcross\fi}
\def\qand{\quad\text{and}\quad}
\def\qqand{\qquad\text{and}\qquad}
\let\emptyset=\varnothing
\let\sm=\setminus
\def\moverlay{\mathpalette\mov@rlay}
\def\mov@rlay#1#2{\leavevmode\vtop{   \baselineskip\z@skip \lineskiplimit-\maxdimen
   \ialign{\hfil$\m@th#1##$\hfil\cr#2\crcr}}}
\newcommand{\charfusion}[3][\mathord]{
    #1{\ifx#1\mathop\vphantom{#2}\fi
        \mathpalette\mov@rlay{#2\cr#3}
      }
    \ifx#1\mathop\expandafter\displaylimits\fi}
\newcommand{\dcup}{\charfusion[\mathbin]{\cup}{\cdot}}
\newcommand{\bigdcup}{\charfusion[\mathop]{\bigcup}{\cdot}}
\DeclareFontFamily{U}  {MnSymbolC}{}
\DeclareSymbolFont{MnSyC}         {U}  {MnSymbolC}{m}{n}
\DeclareFontShape{U}{MnSymbolC}{m}{n}{
    <-6>  MnSymbolC5
   <6-7>  MnSymbolC6
   <7-8>  MnSymbolC7
   <8-9>  MnSymbolC8
   <9-10> MnSymbolC9
  <10-12> MnSymbolC10
  <12->   MnSymbolC12}{}
\DeclareMathSymbol{\powerset}{\mathord}{MnSyC}{180}
\DeclareFontFamily{U}{MnSymbolA}{}
\DeclareFontShape{U}{MnSymbolA}{m}{n}{
    <-6>  MnSymbolA5
   <6-7>  MnSymbolA6
   <7-8>  MnSymbolA7
   <8-9>  MnSymbolA8
   <9-10> MnSymbolA9
  <10-12> MnSymbolA10
  <12->   MnSymbolA12}{}
\DeclareFontShape{U}{MnSymbolA}{b}{n}{
    <-6>  MnSymbolA-Bold5
   <6-7>  MnSymbolA-Bold6
   <7-8>  MnSymbolA-Bold7
   <8-9>  MnSymbolA-Bold8
   <9-10> MnSymbolA-Bold9
  <10-12> MnSymbolA-Bold10
  <12->   MnSymbolA-Bold12}{}
\DeclareSymbolFont{MnSyA}{U}{MnSymbolA}{m}{n}
\DeclareRobustCommand{\overleftharpoon}{\mathpalette{\overarrow@\leftharpoonfill@}}
\DeclareRobustCommand{\overrightharpoon}{\mathpalette{\overarrow@\rightharpoonfill@}}
\def\leftharpoonfill@{\arrowfill@\leftharpoondown\mn@relbar\mn@relbar}
\def\rightharpoonfill@{\arrowfill@\mn@relbar\mn@relbar\rightharpoonup}
\DeclareMathSymbol{\leftharpoondown}{\mathrel}{MnSyA}{'112}
\DeclareMathSymbol{\rightharpoonup}{\mathrel}{MnSyA}{'100}
\DeclareMathSymbol{\mn@relbar}{\mathrel}{MnSyA}{'320}
\let\epsilon=\varepsilon
\let\eps=\epsilon
\let\rho=\varrho
\let\theta=\vartheta
\let\phi=\varphi
\def\NN{{\mathds N}}
\def\ZZ{{\mathds Z}}
\def\PP{{\mathds P}}
\newcommand{\cK}{\mathcal{K}}
\newcommand{\cP}{\mathcal{P}}
\newcommand{\cQ}{\mathcal{Q}}
\newcommand{\cX}{\mathcal{X}}
\newcommand{\cY}{\mathcal{Y}}
\newcommand{\ccA}{\mathscr{A}}
\newcommand{\ccB}{\mathscr{B}}
\newcommand{\ccE}{\mathscr{E}}
\newcommand{\ccF}{\mathscr{F}}
\newcommand{\ccP}{\mathscr{P}}
\def\red{{\rm red}}
\def\blue{{\rm blue}}
\def\green{{\rm green}}
\theoremstyle{plain}
\newtheorem{thm}{Theorem}[section]
\newtheorem{prop}[thm]{Proposition}
\newtheorem{lemma}[thm]{Lemma}
\theoremstyle{definition}
\newtheorem{rem}[thm]{Remark}
\newtheorem{dfn}[thm]{Definition}
\newtheorem{exmp}[thm]{Example}
\newtheorem{conj}[thm]{Conjecture}
\DeclareMathOperator{\ex}{ex}
\newsavebox\vdegbox
\savebox\vdegbox{\tikz{
		\draw[black,fill=black] (90:1) circle (.35);
		\draw[black,line width=0.10cm] (210:1) circle (.30);
		\draw[black,line width=0.10cm] (330:1) circle (.30);
		\draw[opacity=0] (0:1.2) circle (0.1);
	}}
\newsavebox\vvbox
\savebox\vvbox{\tikz{
		\draw[black,line width=0.10cm] (90:1) circle (.30);
		\draw[black,fill=black] (210:1) circle (.35);
		\draw[black,fill=black] (330:1) circle (.35);
		\draw[opacity=0] (0:1.2) circle (0.1);
	}}
\newsavebox\pdegbox
\savebox\pdegbox{\tikz{
		\draw[black,line width=0.10cm] (90:1) circle (.30);
		\draw[black,fill=black] (210:1) circle (.35);
		\draw[black,fill=black] (330:1) circle (.35);
		\draw[black,line width=0.28cm ] (210:1) -- (330:1);
		\draw[opacity=0] (0:1.2) circle (0.1);
	}}
\newsavebox\vvvbox
\savebox\vvvbox{\tikz{
		\draw[black,fill=black] (90:1) circle (.35);
		\draw[black,fill=black] (210:1) circle (.35);
		\draw[black,fill=black] (330:1) circle (.35);
		\draw[opacity=0] (0:1.2) circle (0.1);
	}}
\newcommand{\vvv}{\mathord{\scaleobj{1.2}{\scalerel*{\usebox{\vvvbox}}{x}}}}
\newcommand{\pivvv}{\pi_{\vvv}}
\newsavebox\evbox
\savebox\evbox{\tikz{
		\draw[black,fill=black] (90:1) circle (.35);
		\draw[black,fill=black] (210:1) circle (.35);
		\draw[black,fill=black] (330:1) circle (.35);
		\draw[black,line width=0.28cm ] (210:1) -- (330:1);
		\draw[opacity=0] (0:1.2) circle (0.1);
	}}
\newcommand{\ev}{\mathord{\scaleobj{1.2}{\scalerel*{\usebox{\evbox}}{x}}}}	
\newcommand{\piev}{\pi_{\ev}}
\newsavebox\eebox
\savebox\eebox{\tikz{
		\draw[black,fill=black] (90:1) circle (.35);
		\draw[black,fill=black] (210:1) circle (.35);
		\draw[black,fill=black] (330:1) circle (.35);
		\draw[black,line width=0.28cm ] (90:1) -- (330:1);
		\draw[black,line width=0.28cm ] (90:1) -- (210:1);
		\draw[opacity=0] (0:1.2) circle (0.1);
	}}
\newcommand{\ee}{\mathord{\scaleobj{1.2}{\scalerel*{\usebox{\eebox}}{x}}}}
\newcommand{\piee}{\pi_{\ee}}
\newsavebox\eeebox
\savebox\eeebox{\tikz{
		\draw[black,fill=black] (90:1) circle (.35);
		\draw[black,fill=black] (210:1) circle (.35);
		\draw[black,fill=black] (330:1) circle (.35);
		\draw[black,line width=0.28cm ] (90:1) -- (330:1);
		\draw[black,line width=0.28cm ] (90:1) -- (210:1);
		\draw[black,line width=0.28cm ] (210:1) -- (330:1);
		\draw[opacity=0] (0:1.2) circle (0.1);
	}}
\newcommand{\eee}{\mathord{\scaleobj{1.2}{\scalerel*{\usebox{\eeebox}}{x}}}}
\newcommand{\pieee}{\pi_{\eee}}
\newcommand{\pird}{\pi^{\mathrm{rd}}}
\begin{document}
\title[Extremal problems in uniformly dense hypergraphs]{Extremal problems in uniformly dense hypergraphs}

\author[Christian Reiher]{Christian Reiher}
\address{Fachbereich Mathematik, Universit\"at Hamburg, Hamburg, Germany}
\email{Christian.Reiher@uni-hamburg.de}

\keywords{Tur\'an's hypergraph problem, uniformly dense hypergraphs,  
hypergraph regularity method}
\subjclass[2010]{05C35 (primary), 05C65, 05C80 (secondary)}

\begin{abstract}
For a $k$-uniform hypergraph $F$ 
let $\textrm{ex}(n,F)$ be the maximum number of edges of a
$k$-uniform $n$-vertex hypergraph $H$ which contains no copy of~$F$.
Determining or estimating $\textrm{ex}(n,F)$ is a classical and central problem 
in extremal combinatorics. While for graphs ($k=2$) this problem is well understood, 
due to the work of Mantel, Tur\'an, Erd\H os, Stone, Simonovits and many others, only very little is known 
for $k$-uniform hypergraphs for $k>2$. Already the 
case when $F$ is a~$k$-uniform hypergraph with three edges on~$k+1$ vertices
is still wide open even for~$k=3$.

We consider variants of such problems where the large hypergraph~$H$ 
enjoys additional hereditary density conditions. Questions of this type were
suggested by Erd\H os and S\'os about 30 years ago. In recent work with 
R\"odl and Schacht it turned out that the \emph{regularity method for hypergraphs}, 
established by Gowers and by R\"odl et al.\ about a decade ago, is a suitable 
tool for extremal problems of this type and we shall discuss some of those 
recent results and some interesting open problems in this area.
\end{abstract} 

\maketitle

%\tableofcontents

\section{Introduction}  
\subsection{Tur\'{a}n's extremal problem} 
\label{subsec:thp}
Extremal graph theory is known to have been initiated by Tur\'an's seminal article~\cite{Tu41},
in which he proved that for $n\ge r\ge 2$ there is, among all graphs on $n$ vertices not 
containing a clique of order $r$, exactly one whose number of edges is maximal, namely   
the balanced complete $(r-1)$-partite graph. 
Tur\'an then asked for similar results, where instead of a clique 
one intends to find the $1$-skeleton of a given platonic solid in the host graph. Moreover, 
he proposed to study analogous questions in the context of hypergraphs. 

Fixing some terminology, we say for a nonnegative integer $k$ that a pair $H=(V, E)$
is a {\it $k$-uniform hypergraph}, if~$V$ is a finite set of {\it vertices} and 
$E\subseteq V^{(k)}=\{e\subseteq V\colon |e|=k\}$ is a set of $k$-element subsets of~$V$, 
whose members are called the {\it edges} of $H$. As usual $2$-uniform hypergraphs are simply 
called {\it graphs}. Associated with every given $k$-uniform hypergraph~$F$ one has 
{\it Tur\'an's extremal function} $\ex(\cdot, F)$ mapping every positive integer~$n$ to 
\[
	\ex(n, F)
	=
	\max
	\big\{|E|\colon \text{$H=(V,E)$ is an $F$-free, $k$-uniform hypergraph with $|V|=n$}\big\}\,,
\]
i.e., to the largest number of edges 
that a $k$-uniform hypergraph on $n$ vertices 
without containing $F$ as a (not necessarily induced) subhypergraph can have. 
In its strictest sense, {\it Tur\'an's hypergraph problem} asks to determine this 
function for every hypergraph $F$. 

Using an averaging argument, Katona, Nemetz, and Simonovits~\cite{KNS64} have shown 
that for every $k$-uniform hypergraph $F$ the sequence 
$n\longmapsto \ex(n, F)\big/ \binom{n}{k}$ is nonincreasing. Therefore the limit
\[
	\pi(F)=\lim_{n\to\infty}\frac{\ex(n, F)}{\binom{n}{k}}\,,
\]
known as the {\it Tur\'{a}n density of $F$}, exists. The problem to determine the
Tur\'{a}n densities of all hypergraphs is likewise called {\it Tur\'{a}n's hypergraph
problem}.
 
It may be observed that these problems are trivial for $k\in\{0, 1\}$, while the case $k=2$ 
is fairly well understood. Tur\'{a}n himself~\cite{Tu41} determined $\ex(n, K_r)$ for all 
integers $n$ and~$r$, thus proving~$\pi(K_r)=\tfrac{r-2}{r-1}$ for every integer $r\ge 2$. 
This was further generalised by Erd\H{o}s and Stone~\cite{ErSt46}, and their result 
can be shown to yield the full answer to the 
Tur\'{a}n density problem in the case of graphs. Explicitly, we have
\begin{equation}\label{eq:miki}
	\pi(F)=\frac{\chi(F)-2}{\chi(F)-1} 
\end{equation}
for every graph $F$ with at least one edge, where $\chi(F)$ denotes the {\it chromatic
number} of~$F$, i.e., the least integer $r$ for which there exists a graph homomorphism
from $F$ to~$K_r$ (see also~\cite{ErSi66}, where the connection with the chromatic number 
appeared first).   

Already for $k=3$, however, our knowledge is very limited and there are only very few 
$3$-uniform hypergraphs $F$ for which the function $\ex(\cdot, F)$ is completely known. 
A notable example occurs when $F$ denotes the Fano plane. S\'os conjectured in the 1970s 
that for~$n\ge 7$ the balanced, complete, bipartite hypergraph is extremal for this problem.  
The first result in this direction is due to de Caen and F\"uredi~\cite{DeFu00}, 
who proved that at least the consequence $\pi(F)=\tfrac34$ of S\'os's conjecture holds. 
By combining their work with Simonovits's stability method~\cite{Si68} it was shown
in~\cites{FuSi05, KeSu05} that the conjecture holds for all sufficiently large 
hypergraphs. A full proof applying to all $n\ge 7$ was recently obtained in~\cite{BR}.

On the other hand, even concerning the $3$-uniform hypergraphs on four vertices with 
three and four edges, denoted by $K_4^{(3)-}$ and $K_4^{(3)}$ respectively, it is only known 
that
\[
		\frac{2}{7}\leq\pi(K_4^{(3)-})\leq 0.2871
		\qqand
		\frac{5}{9}\leq\pi(K_4^{(3)})\leq 0.5616\,.
\] 

The lower bounds are derived from explicit constructions due to 
Frankl and F\"uredi~\cite{FrFu84} and to Tur\'{a}n (see, e.g.,~\cite{Er77}),
and in both cases they are universally believed to be optimal. 
The upper bounds were obtained by computer assisted calculations based on Razborov's
\emph{flag algebra method} introduced in~\cite{Ra07}.
They are due to Baber and Talbot~\cite{BaTa11}, and to Razborov himself~\cite{Ra10}. 
For a more detailed discussion of our current knowledge about Turan's hypergraph problem 
we refer to Keevash's survey~\cite{Ke11}.

\subsection{Uniformly dense hypergraphs} 
\label{subsec:udh}
 
Let us now restrict our attention to $3$-uniform hypergraphs. Accordingly, the 
word {\it hypergraph} will henceforth always mean $3$-uniform hypergraph. Concerning the 
extremal problem for $K_4^{(3)-}$ it was thought for a while that its Tur\'an density 
might be $\tfrac14$. 

This notion was based on the following construction, which goes back to 
the work of Erd\H{o}s and Hajnal~\cite{ErHa72}. Take a random 
tournament $T$ on a large set~$V$ of vertices. Evidently any three vertices in $V$ induce 
either a transitive subtournament of~$T$ or a cyclic triangle. Furthermore, the former 
happens with a probability of $\tfrac34$ and the latter with a probability of~$\tfrac14$.
Define, depending on $T$, a random hypergraph $H(T)$ on $V$ whose edges correspond to the cyclic 
triangles in $T$. One checks easily that $H(T)$ can never contain a~$K_4^{(3)-}$ and the 
random choice of $T$ causes $H(T)$ to have, with high probability, an edge density close 
to~$\tfrac14$.

While the construction of Frankl and F\"uredi~\cite{FrFu84} mentioned earlier shows that 
the hypergraphs $H(T)$ cannot be {\it optimal among all $K_4^{(3)-}$-free hypergraphs},
it was suggested by Erd\H{o}s and S\'{o}s (see e.g.,~\cites{ErSo82, Er90}) that there 
might still be a natural sense in which they are optimal $K_4^{(3)-}$-free hypergraphs.
Specifically, they suggested to focus only on {\it uniformly dense} host hypergraphs defined
as follows. 

\begin{dfn} \label{dfn:vtxdense}
For real numbers $d\in[0, 1]$ and $\eta>0$ we say that a $3$-uniform hypergraph 
$H=(V, E)$ is {\it uniformly $(d, \eta)$-dense} if for all $U\subseteq V$ the 
estimate 
\[
	|U^{(3)}\cap E|\ge d\binom{|U|}{3}-\eta\,|V|^3
\]
holds. 
\end{dfn} 
 
Using standard probabilistic estimates one checks easily that for every accuracy 
para\-meter~${\eta>0}$ the probability that $H(T)$ is uniformly $(\tfrac14, \eta)$-dense 
tends to $1$ as the number of vertices tends to infinity. 
The Tur\'an theoretic question about the optimal 
density of uniformly dense hypergraphs not containing a given hypergraph $F$ (such as 
$K_4^{(3)-}$) can be made precise by introducing the quantities 
\begin{align} \label{eq:11}
	\pi_{\vvv}(F)  =\sup\bigl\{d\in[0,1]\colon &\text{for every $\eta>0$ and $n\in \NN$ there exists an $F$-free,} \notag\\
	& \text{ uniformly $(d,\eta)$-dense hypergraph $H$ with $|V(H)|\geq n$}\bigr\}\,,
\end{align}
which are to be regarded as modified versions of the usual Tur\'an densities
for uniformly dense hypergraphs. 
With this notation at hand,
the tournament construction shows that~$\pi_{\vvv}(K_4^{(3)-})\ge \tfrac 14$ 
and the aforementioned conjecture of Erd\H{o}s and S\'{o}s states that, actually,  
this holds with equality. Recently this has been shown independently in~\cite{GKV} 
and in~\cite{RRS-a}. 

\begin{thm} \label{thm:jems}
	We have $\pi_{\vvv}(K_4^{(3)-}) = \tfrac 14$.
\end{thm}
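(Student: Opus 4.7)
The lower bound $\pivvv(K_4^{(3)-}) \ge \tfrac14$ is witnessed by the Erd\H{o}s--Hajnal random tournament construction $H(T)$ already described: with high probability it is uniformly $(\tfrac14 - o(1), \eta)$-dense, and the link graph of every vertex $v$ in $H(T)$ is bipartite (the bipartition being given by the in- versus out-neighbourhood of $v$ in $T$), hence triangle-free, so $H(T)$ contains no $K_4^{(3)-}$ (since a $K_4^{(3)-}$ whose apex is $v$ requires a triangle in the link $L_v$). It remains to prove the matching upper bound $\pivvv(K_4^{(3)-}) \le \tfrac14$.

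Suppose $d > \tfrac14$, let $\eta = \eta(d) > 0$ be sufficiently small, and let $H=(V,E)$ be uniformly $(d, \eta)$-dense with $|V| = n$ large. My plan is to locate a $K_4^{(3)-}$ via the hypergraph regularity method of Gowers and R\"odl--Skokan. First, apply the regularity lemma to $H$: this produces a vertex partition $V = V_1 \dcup \cdots \dcup V_t$ together with a partition, for each pair $\{i,j\}$, of the complete bipartite graph between $V_i$ and $V_j$ into $\epsilon$-regular bipartite graphs of some controlled density, in such a way that $H$ is $\delta$-regular on almost every \emph{triad} --- the tripartite graph obtained by selecting one regular bipartite piece over each pair among three clusters. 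Second, exploit the uniform $(d, \eta)$-density of $H$: applying the defining inequality to suitable unions of regularity atoms and averaging shows that almost every triad carries a regular subhypergraph of $H$ of relative density at least $d - o(1) > \tfrac14$.

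The third step, which is the crux, locates a \emph{$K_4^{(3)-}$-configuration} in the reduced structure: four clusters $V_{i_1}, V_{i_2}, V_{i_3}, V_{i_4}$ and a choice of one regular bipartite piece $P_{ab}$ for each pair $\{a,b\} \subseteq \{i_1, \ldots, i_4\}$ such that three of the four resulting triads (say those indexed by $\{1,2,3\}$, $\{1,2,4\}$, $\{1,3,4\}$) each support a regular subhypergraph of $H$ of density at least $d - o(1)$. Once such a configuration is secured, the counting/embedding lemma for regular triads produces many copies of $K_4^{(3)-}$ in $H$, contradicting the assumed freeness.

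The main obstacle is this last step. A single dense triad follows trivially from the second step, but one needs three \emph{compatible} dense triads sharing the prescribed polyads over a common four-cluster skeleton, and here the gap $d - \tfrac14$ must be used essentially --- any argument that worked at the threshold $\tfrac14$ would contradict the tournament construction. I expect the combinatorial heart of the proof to be a local/link argument at the level of the reduced structure: assuming no such configuration exists, one should be able to encode the reduced hypergraph by a tournament-like orientation on its clusters (with each polyad playing the role of an oriented edge), and a short density computation mirroring the calculation behind $H(T)$ would then cap the total density at $\tfrac14$, contradicting $d > \tfrac14$. Making this dichotomy quantitative --- possibly by invoking a stability version of the triangle-free-graph structure theorem on typical links in the reduced hypergraph --- is where I anticipate the real work to lie.
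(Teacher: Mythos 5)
Your lower bound argument is correct and is the same Erd\H{o}s--Hajnal tournament construction already described in the paper; the observation that each link graph is bipartite between the in- and out-neighbourhood is exactly why $H(T)$ is $K_4^{(3)-}$-free. For the upper bound, however, note first that this survey does not itself prove Theorem~\ref{thm:jems} but cites~\cite{GKV} and~\cite{RRS-a} for it. The general strategy you outline --- regularize, pass to a reduced structure, find a $K_4^{(3)-}$-configuration, embed --- is exactly the skeleton this paper formalises in Theorem~\ref{thm:33} together with Propositions~\ref{prop:1}, \ref{prop:irregular} and~\ref{prop:3}. That machinery reduces $\pi_{\vvv}(K_4^{(3)-})\le\tfrac14$ to the purely combinatorial assertion that every $(\tfrac14+\eps,\vvv)$-dense reduced hypergraph with sufficiently many indices contains a reduced image of $K_4^{(3)-}$. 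Once the regularity framework is in place, that assertion \emph{is} the content of the theorem, and it is precisely the step you leave open (``I expect\ldots'', ``I anticipate the real work to lie''). The proposal therefore does not contain a proof of the upper bound; it contains a reduction together with an unproved conjecture.

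Furthermore, the dichotomy you speculate about --- that absence of a $K_4^{(3)-}$-configuration forces a tournament-like orientation on the clusters, or that one can invoke stability of triangle-free graphs on links of the reduced structure --- is neither established nor obviously true, and it does not match what the paper reports about~\cite{RRS-a}. As stated in Example~\ref{ex:tourn}, the actual argument there shows that a $(\tfrac14+\eps,\eta,\vvv)$-dense hypergraph has a \emph{vertex} whose link graph contains a triangle, a more delicate local argument than a global tournament encoding. The Erd\H{o}s--Hajnal construction shows that tournaments yield $K_4^{(3)-}$-free dense hypergraphs, but there is no converse, and the reduced hypergraph $\ccA$ --- with one vertex class per index pair and one constituent per index triple --- does not straightforwardly encode a tournament on the clusters. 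Also, as a minor point, your claim in the second step that ``almost every triad carries a regular subhypergraph of relative density at least $d-o(1)$'' is not what uniform density gives: the correct reduced-level statement (cf.\ the first case in the proof of Proposition~\ref{prop:3}) is that each constituent $\ccA^{ijk}$ has \emph{edge density} at least $d-o(1)$, not that typical individual triads are dense.
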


One of the two proofs referred to above consists of a 
computer-generated argument based on Razborov's flag algebra method, while the other one 
uses the hypergraph regularity method. 
The subsequent progress in this area (see~\cites{RRS-e, RRS-zero}) has followed the 
latter approach. Moreover, continuing the collaboration with R\"odl and Schacht, we have 
shown that there is a large number of further variants of the classical Tur\'an 
density that can likewise be studied by means of the hypergraph regularity method  
(see~\cites{RRS-c, RRS-d}). The goal of this article is to survey these 
recent developments.

Before we proceed any further, however, we would like to draw the reader's attention to 
perhaps the most urgent problem in the area, the determination of $\pivvv(K^{(3)}_4)$. The following 
construction, due to R\"odl~\cite{Ro86}, shows that this number has to be at least $\tfrac 12$.
Consider, for a sufficiently large natural number $n$, the elements of 
$[n]=\{1, 2, \ldots, n\}$ as vertices. Assign to every pair $ij$ of vertices uniformly 
at random one of the colours {\it red}
or {\it green}. Declare a triple $ijk$ with $1\le i<j<k\le n$ to be an edge of the 
hypergraph $H$ we are about to exhibit, if the colours of $ij$ and $ik$ disagree. 
Of course this happens with a probability of $\tfrac 12$ and, again, standard probabilistic 
arguments show that for every $\eta>0$ it happens asymptotically almost surely that $H$ 
is uniformly $(\frac 12, \eta)$-dense. Moreover, it is impossible that $H$ contains a 
tetrahedron. This is because for any four vertices $i<j<k<\ell$ it must be the case that two
of the three pairs $ij$, $ik$, and $i\ell$ receive the same colour, meaning that the three 
triples $ijk$, $ij\ell$, and $ik\ell$ cannot be present in $H$ at the same time. 
\begin{conj} \label{conj:wahres1/2}
	R\"odl's construction is optimal, i.e., we have $\pi_{\vvv}(K_4^{(3)})= \tfrac 12$.
\end{conj}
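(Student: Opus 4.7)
The lower bound $\pi_{\vvv}(K_4^{(3)}) \ge \tfrac{1}{2}$ is already supplied by R\"odl's construction described above, so the task is to establish the matching upper bound: for every $d > \tfrac{1}{2}$ there should exist $\eta > 0$ and $n_0$ such that every uniformly $(d, \eta)$-dense hypergraph $H$ on $n \ge n_0$ vertices contains a copy of~$K_4^{(3)}$. My plan is to attack this in the spirit of the regularity-based approach that succeeded for $K_4^{(3)-}$ in~\cite{RRS-a}.

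The first step is to apply the hypergraph regularity lemma, obtaining a vertex partition $V(H) = V_1 \dcup \cdots \dcup V_t$ together with, for each pair $\{i,j\}$, a partition of the bipartite graph between $V_i$ and $V_j$ into regular bipartite pieces $P^{ij}_1, \ldots, P^{ij}_r$, such that most triads $(P^{ij}_\alpha, P^{ik}_\beta, P^{jk}_\gamma)$ support a regular slice of~$H$ with a well-defined density $d^{ijk}_{\alpha\beta\gamma}$. Unpacking the uniform density hypothesis along the partition yields a lower bound on a suitable average of these slice-densities, and the problem reduces to finding a \emph{tetrahedron pattern} in the reduced structure: four clusters $V_{i_1}, \ldots, V_{i_4}$ together with a choice of pair-colour $\alpha_{ab}$ on each of the six pair-clusters $V_{i_a} V_{i_b}$ such that all four corresponding triple-slices have positive density. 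A counting/embedding lemma of the type developed in~\cite{RRS-a} would then promote such a pattern to a copy of~$K_4^{(3)}$ in~$H$.

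The main analytic obstacle is to prove the matching extremal statement in the reduced setting: if the average fraction of supported triads exceeds~$\tfrac{1}{2}$, then a tetrahedron pattern as above must exist. R\"odl's construction pinpoints why the threshold is sharp---when the pair-colouring is globally induced by a $2$-colouring of $V(H)^{(2)}$, every vertex's link graph is bipartite and no four pairwise compatible colour choices can coexist. The difficulty is that, unlike for $K_4^{(3)-}$ where only three of the four triples on a $4$-set must be realised and the six pair-colour constraints decouple to a large extent, for~$K_4^{(3)}$ all four triples must be present simultaneously and each of the six pairs participates in two of them, tightly coupling all the constraints. Ruling out reduced configurations that locally mimic R\"odl's global $2$-colouring, without being able to assume any global colouring structure, appears to require a genuinely new combinatorial or Cauchy--Schwarz-type inequality for the reduced pair-partition, and presumably this is why the conjecture remains open.
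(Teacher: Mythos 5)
This is Conjecture~\ref{conj:wahres1/2} in the paper, and it is genuinely open: the paper does not prove it, and the only recorded progress is the strictly weaker Theorem~\ref{thm:jctb50} that $\piev(K_4^{(3)}) = \tfrac12$. Your proposal handles this honestly---you supply the lower bound via R\"odl's construction, sketch the natural regularity-method plan (which the paper formalises through reduced hypergraphs in Theorem~\ref{thm:33}), pinpoint the missing extremal statement at the reduced level as the true obstacle, and correctly conclude that the problem remains open, so there is no proof here to compare against one in the paper. One small imprecision worth flagging: R\"odl's hypergraph depends on the vertex \emph{ordering} as well as on the $2$-colouring of pairs, and the reason $K_4^{(3)}$ is excluded is best phrased in terms of the minimum vertex of a putative copy---among the three pairs leaving it, two must share a colour, so one of the three triples through it is absent---rather than the blanket claim that every vertex's link graph is bipartite (which only holds cleanly for the link restricted to larger vertices). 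This is not a cosmetic point: once you pass to the reduced hypergraph $\ccA$ there is no canonical ordering of the index set, so the structural reason for R\"odl's bound does not transplant directly, and this is part of what makes the extremal lemma you would need so elusive. Your intuition that $K_4^{(3)}$ is qualitatively harder than $K_4^{(3)-}$ because all four triples must be realised simultaneously, tightly coupling all six pair-choices, matches the paper's assessment that this remains ``perhaps the most urgent problem in the area.''
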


A partial result in this direction is given by Theorem~\ref{thm:jctb50} below. 

\subsection{Further Tur\'an densities} 
\label{subsec:13}
For proving results about $\pivvv(\cdot)$ one typically works with a property of 
hypergraphs that turns out to be more useful than the uniform density condition 
introduced in Definition~\ref{dfn:vtxdense}. Rather than knowing something about 
the edge densities {\it within single} sets of vertices, it is more helpful to 
have comparable knowledge about the edge densities {\it between any three} sets 
of vertices. Explicitly, if $H=(V, E)$ denotes a hypergraph and $A, B, C\subseteq V(H)$, 
we set 
\[
	E_{\vvv}(A, B, C)=\{(a, b, c)\in A\times B\times C\colon abc\in E\}\,.
\]
Moreover, for two real numbers $d\in[0, 1]$ and $\eta>0$ we say that $H$ 
is {\it $(d, \eta, \vvv)$-dense} if 
\[
	|E_{\vvv}(A, B, C)| \ge d |A| |B| |C| -\eta |V|^3
\]
holds for all $A, B, C\subseteq V$. One checks immediately by setting $U=A=B=C$ 
that every $(d, \eta, \vvv)$-dense hypergraph is also uniformly $(d, \eta/6)$-dense. 
In the converse direction one can only show that large uniformly dense hypergraphs 
contain linear sized subhypergraphs that are still dense in this new sense with almost
the same density, and that this is enough for proving   
\begin{align} \label{eq:12}
	\pi_{\vvv}(F) =\sup\bigl\{ d & \in[0,1]\colon  \text{for every $\eta>0$ and $n\in \NN$ there exists} \notag\\
	& \text{an $F$-free, $(d,\eta, \vvv)$-dense hypergraph $H$ with $|V(H)|\geq n$}\bigr\}\,.
\end{align}

A proof of this equality can be found in~\cite{RRS-e}*{Proposition~2.5}, where one has to 
set $k=3$ and $j=1$. Alternatively, the reader may prefer to regard~\eqref{eq:12} as the 
``official definition'' of~$\pivvv(\cdot)$ and treat~\eqref{eq:11} just like an 
additional piece of information that is not going to be used throughout the rest of this 
article. As a matter of fact, this may even be the more natural approach to this subject,
and the three dots occurring in the symbol~$\pivvv(\cdot)$ are intended to remind us of 
the three sets $A$, $B$, and $C$ mentioned in the definition of being~$(d, \eta, \vvv)$-dense.    
 
We proceed with a more restrictive property of hypergraphs shared by both the random 
tournament construction and by R\"odl's hypergraph introduced in the previous subsection. 
Given a hypergraph $H=(V, E)$, a set $A\subseteq V$, and a set of ordered 
pairs~$P\subseteq V^2$ we set 
\[
		E_{\ev}(A, P)=\{(a, b, c)\in V^3\colon a\in A, (b, c)\in P, \text{ and } abc\in E\}\,.
\]
So for instance $E_{\ev}(A, B\times C)=E_{\vvv}(A, B, C)$ holds for all $A, B, C\subseteq V$.
Next, for two real numbers $d\in[0, 1]$ and $\eta>0$ we say that $H$ is 
{\it $(d, \eta, \ev)$-dense} provided that 
\[
	|E_{\ev}(A, P)| \ge d |A| |P| -\eta |V|^3
\]
holds for all $A\subseteq V$ and $P\subseteq V^2$. Finally we define
\begin{align*} %\label{eq:12}
	\piev(F) =\sup\bigl\{d & \in[0,1]\colon  \text{for every $\eta>0$ and 
		$n\in \NN$ there exists} \notag\\
	& \text{an $F$-free, $(d,\eta, \ev)$-dense hypergraph $H$ with $|V(H)|\geq n$}\bigr\}
\end{align*}
for every hypergraph $F$. Since every $(d, \eta, \ev)$-dense hypergraph is, in particular, 
also $(d, \eta, \vvv)$-dense, we have 
\[
	\piev(F)\le \pivvv(F)
\]
for every hypergraph $F$. Let us remark at this point that due to the fact that 
R\"odl's hypergraph is $(\frac12, \eta, \ev)$-dense we have $\piev(K_4^{(3)})\ge\frac12$.
Thus the following result from~\cite{RRS-c} shows that a considerably weaker version of 
Conjecture~\ref{conj:wahres1/2} is true. 
\begin{thm} \label{thm:jctb50}
	We have $\piev(K_4^{(3)}) = \tfrac12$.
\end{thm}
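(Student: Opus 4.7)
The lower bound $\pi_{\ev}(K_4^{(3)}) \geq \tfrac12$ follows directly from R\"odl's construction discussed just before Conjecture~\ref{conj:wahres1/2}, which one checks to be $(\tfrac12, \eta, \ev)$-dense for every $\eta > 0$. For the upper bound one must show that for every $d > \tfrac12$ there exist $\eta > 0$ and $n_0$ such that every $(d, \eta, \ev)$-dense hypergraph on $n \geq n_0$ vertices contains a copy of $K_4^{(3)}$.

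The plan is to apply the hypergraph regularity method to the host hypergraph $H$. This produces a partition of the vertex set into clusters $V_1, \ldots, V_t$ together with a refined partition of each pair $V_i \times V_j$ into bipartite blocks, with a regularity guarantee on all but a $o(1)$-fraction of the resulting triads. After cleaning away edges lying in irregular or sparse triads---a negligible amount provided $\eta$ is small relative to the regularity parameters---the dense counting lemma produces many copies of $K_4^{(3)}$ as soon as one locates four clusters $V_{i_1}, \ldots, V_{i_4}$ and six bipartite blocks $B_{i_ai_b}$ between them such that all four triads $(B_{i_ai_b}, B_{i_ai_c}, B_{i_bi_c})$ are regular and support positive $H$-edge density. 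Thus the entire problem reduces to locating such a tetrahedral arrangement at the reduced level.

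The principal use of the $\ev$-density hypothesis is to convert $d > \tfrac12$ into a structural statement about this reduced setup. Applying the $\ev$-condition with $A$ a cluster and $P$ a union of bipartite blocks, and averaging over the partition, one shows that the collection of blocks admitting, through a third cluster, a triad of positive $H$-density carries total weight strictly above $\tfrac12$ in the appropriate normalisation. The combinatorial core of the proof is then a tetrahedral lemma asserting that any reduced system of blocks whose dense triads have weight exceeding $\tfrac12$ must contain a tetrahedral configuration of the type above. I expect this step to be the main obstacle, and it is the place where the threshold $\tfrac12$ is tight: R\"odl's $2$-colouring construction saturates it, since at $d = \tfrac12$ the blocks can be coherently $2$-coloured and the parity argument recalled in the introduction (the three pairs $i_1i_2$, $i_1i_3$, $i_1i_4$ cannot be pairwise colour-distinct) blocks every tetrahedron, whereas strictly exceeding $\tfrac12$ destroys any such $2$-colourable obstruction.

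Once such a tetrahedral configuration has been located, the dense counting lemma of the hypergraph regularity method promotes it to an actual copy of $K_4^{(3)}$ in $H$, completing the argument. The only remaining technical work is to arrange the hierarchy of parameters so that the $\ev$-density condition survives the cleaning and averaging steps, which is routine in this framework.
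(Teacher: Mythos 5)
Your outline correctly identifies the lower bound via R\"odl's construction and correctly recapitulates the regularity reduction to a statement about reduced hypergraphs---the machinery the paper develops in Sections~3--5 and packages as Theorem~\ref{thm:33}. However, there is a genuine gap: the ``tetrahedral lemma'' you identify as the combinatorial core is the entire substance of the theorem, and you give no argument for it. By Theorem~\ref{thm:33} the upper bound reduces to showing that every $(\tfrac12+\eps,\ev)$-dense reduced hypergraph on sufficiently many indices contains a reduced image of~$K_4^{(3)}$, and this is where all the difficulty lies. The regularity-lemma cleaning, the transfer of $\ev$-density from~$H$ to a reduced hypergraph, and the final application of the counting lemma are carried out in full generality in Propositions~\ref{prop:1},~\ref{prop:irregular}, and~\ref{prop:3} of the paper; the reduced tetrahedron lemma itself is the real content of~\cite{RRS-c} and is nontrivial---asserting that it ``should be the main obstacle'' is not a proof of it.

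A related precision issue could lead you astray. You phrase the reduced-level hypothesis as the dense triads ``carrying total weight strictly above $\tfrac12$,'' which reads like a $\vvv$-type global density assumption. That alone does \emph{not} suffice: the claim that a reduced hypergraph of $\vvv$-density above $\tfrac12$ must contain a reduced tetrahedron is, via Theorem~\ref{thm:33}, equivalent to Conjecture~\ref{conj:wahres1/2}, which is open. What the $\ev$-density of~$H$ actually yields at the reduced level (see the second case in the proof of Proposition~\ref{prop:3}) is a minimum vertex-degree condition on the constituents: for every triple of indices $i,j,k$ and \emph{every} vertex $P^{ij}\in\cP^{ij}$, the pairs $(P^{ik},P^{jk})$ forming an edge with $P^{ij}$ in $\ccA^{ijk}$ comprise at least a $(\tfrac12+\tfrac{\eps}{7})$-fraction of $\cP^{ik}\times\cP^{jk}$. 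The combinatorial argument must exploit this pointwise degree hypothesis rather than a global count, and even then it is far from automatic---R\"odl's palette shows the threshold $\tfrac12$ is sharp under this stronger hypothesis as well. As written, your proposal conflates the $\ev$-case you need with the harder, open $\vvv$-case.
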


The process of replacing a pair of sets by a set of pairs may be repeated once more. 
For a hypergraph $H=(V, E)$ and two sets of ordered pairs of vertices $P, Q\subseteq V^2$
one defines 
\[
	\cK_{\ee}(P, Q)=\{(a, b, c)\in V^3\colon (a, b)\in P \text{ and } (b, c)\in Q \}
\]
as well as 
\[
		E_{\ee}(P, Q)=\{(a, b, c)\in \cK_{\ee}(P, Q) \colon abc\in E\}\,.
\]
Notice that for all $A\subseteq V$ and $P\subseteq V^2$ we have 
\[ 
	|\cK_{\ee}(A\times V, P)|=|A| |P|
	\quad \text{ and } \quad 
	E_{\ee}(A\times V, P)=E_{\ev}(A, P)\,. 
\]
Next, we declare $H$ to be {\it $(d, \eta, \ee)$-dense}
for two real numbers $d\in [0, 1]$ and $\eta>0$ if
\[
	|E_{\ee}(P, Q)|\ge d |\cK_{\ee}(P, Q)| -\eta |V|^3 
\]
holds for all $P, Q\subseteq V^2$. If this is the case, then $H$ is $(d, \eta, \ev)$-dense 
as well. The generalised Tur\'an densities appropriate for this concept are defined by 
\begin{align*} %\label{eq:12}
	\piee(F) =\sup\bigl\{d & \in[0,1]\colon  \text{for every $\eta>0$ and $n\in \NN$ there exists} \notag\\
	& \text{an $F$-free, $(d,\eta, \ee)$-dense hypergraph $H$ with $|V(H)|\geq n$}\bigr\}\
\end{align*}
for every hypergraph $F$, and as before we may observe that
\[
	\piee(F) \le \piev(F)\,.
\]  

The investigation of these quantities was initiated in~\cite{RRS-d}, where the case that $F$
is a clique received particular attention. This led to the curious situation that while 
the value of $\piee(K^{(3)}_5)$ is still unknown, it has been be shown 
that $\piee(K^{(3)}_{11})=\tfrac 23$ holds (see Theorem~\ref{thm:fortress}). 
We would like to mention that $\ee$-dense 
hypergraphs have recently also been studied by Aigner-Horev and Levy~\cite{ELAD}
in the context of hypergraph Hamiltonicity problems. 

It is natural to expect at this moment some definitions of sets like $\cK_{\eee}(P, Q, R)$
and $E_{\eee}(P, Q, R)$ involving three sets of ordered pairs, but it can be shown that the 
corresponding generalised Tur\'an densities $\pieee(F)$ vanish for all hypergraphs $F$ 
(see~\cite{KRS02}). 

Still, there are some further variations on this theme. We refer to
the concluding remarks in~\cite{RRS-c} for a complete enumeration of all uniform density 
notions in the context of $3$-uniform hypergraphs\footnote{Strictly speaking, that article 
deals with quasirandomness notions instead of density notions, the difference being that 
in~\cite{RRS-c} there are also {\it upper bounds} imposed on the numbers $|E_{\vvv}(P, Q)|$, etc.
It seems, however, that the present version demanding only lower bounds on these numbers
is more natural from the perspective of  hypergraph Tur\'an problems.}. 
A more systematic account applying to $k$-uniform hypergraphs for all $k\ge 2$ has been 
given in~\cite{RRS-e}*{Section~2}. In this survey, however, we shall mainly focus 
on the most concrete cases~$\vvv$, $\ev$, and $\ee$.  
 
\section{Examples}

All known lower bounds on quantities of the form $\pi_\star(F)$ 
with $\star\in\{\vvv, \ev, \ee\}$ are derived from probabilistic constructions 
that can be viewed as appropriate modifications of R\"odl's hypergraph introduced at 
the end of Subsection~\ref{subsec:udh}. Basically, these constructions
combine an ordering of the vertices, a colouring of the pairs of vertices, 
and certain rules telling us which colour patterns on triples
of vertices are to be translated into edges of the envisioned hypergraph. 

As a matter of fact, even the Erd\H{o}s-Hajnal tournament hypergraph can be presented 
in this manner, even though prima facie it depends on an orientation rather than on a 
colouring of the pairs. 
Once its vertices receive an arbitrary ordering, however, there will be ``forward'' and 
``backward'' arcs between the vertices, and this state of affairs can alternatively be 
encoded by using two colours. Moreover, one can decide the presence or absence of an 
edge $abc$ in the hypergraph as soon as one knows the three ``colours'' received by the 
pairs $ab$, $ac$, and $bc$ (as well as the ordering of $\{a, b, c\}$).
 
For all these reasons, we shall now describe an abstract framework for presenting such 
constructions. Given a nonempty finite set $\Phi$ of {\it colours} we call a 
set $\ccP\subseteq \Phi^3$ a {\it palette} (over~$\Phi$). So the elements of 
palettes are ordered triples of colours, called {\it colour patterns}. Such a palette is said 
to be {\it $(d, \vvv)$-dense} for a real number $d\in [0, 1]$ if $|\ccP|\ge d|\Phi|^3$
holds. Given a vertex set $V$ equipped with a linear ordering $<$ and a 
colouring~${\phi\colon V^{(2)}\longrightarrow \Phi}$ we define a hypergraph $H^\ccP_\phi=(V, E)$
by 
\begin{equation}\label{eq:HphiP}
	E=\bigl\{\{x, y, z\}\in V^{(3)}\colon x<y<z \text{ and } 
		\bigl(\phi(x, y), \phi(x, z), \phi(y, z)\bigr)\in\ccP\bigr\}\,.
\end{equation}

In practice, one usually takes $V=[n]$ for a sufficiently large integer $n$ 
and adopts the standard ordering on this set as $<$. 
This causes no loss of generality in the sense that one still considers the same 
isomorphism types of hypergraphs as in the general case. 

Now the important observation is that if the underlying palette $\ccP$ is $(d, \vvv)$-dense
for some real $d\in [0, 1]$, and if the colouring $\phi$ gets chosen uniformly at random (among 
all~$|\Phi|^{\binom{n}{2}}$ possibilities), then for every $\eta>0$ the hypergraph $H^\ccP_\phi$
is asymptotically almost surely~$(d, \eta, \vvv)$-dense. Furthermore, given a hypergraph $F$ 
and a palette $\ccP$ it can be decided in finite time whether there exists a hypergraph 
of the form $H^\ccP_\phi$ containing $F$. Specifically, this happens if and only if there 
exists an ordering $<$ of $V(F)$ as well as a colouring 
$\phi\colon \partial F\longrightarrow \Phi$ of the set of pairs covered by edges of $F$
such that every edge $xyz$ of $F$ 
with $x < y < z$ satisfies
\[
	\bigl(\phi(x, y), \phi(x, z), \phi(y, z)\bigr)\in\ccP\,.
\]
Thus, whenever $F$ fails to admit such a pair $(<, \phi)$, one knows that $\pivvv(F)\ge d$. 

\begin{exmp}\label{exmp:null}
The simplest (nontrivial) palettes that can be imagined just consist of a single 
colour pattern. Owing to potential repetitions of colours in such a pattern, there
arise several distinct possibilities, the most restrictive of which is given 
by three distinct colours. So let us consider the case that $\Phi=\{\red, \blue, \green\}$ and 
$\ccP=\{(\red, \blue, \green)\}$. 

Clearly, $\ccP$ is $(\tfrac 1{27}, \vvv)$-dense. 
Therefore, the previous discussion shows that if a hypergraph~$F$ does not have 
property~\ref{zero:b} in Theorem~\ref{thm:zero} below, then $\pivvv(F)\ge \tfrac 1{27}$.
In other words, if $\pivvv(F)<\tfrac 1{27}$, then $F$ needs to admit such an ordering 
of its vertices together with 
such a colouring of its shadow. The main result of~\cite{RRS-zero} informs us that under this 
condition one actually has $\pivvv(F)=0$. This implies that $\pivvv(F)\not\in(0, \frac 1{27})$
holds for every hypergraph~$F$.

\end{exmp}
     
\begin{thm}\label{thm:zero}
For a $3$-uniform 
hypergraph $F$, the following are equivalent:
\begin{enumerate}[label=\alabel]
	\item \label{zero:a}$\pivvv(F)=0$.
	\item \label{zero:b} There is an enumeration of the vertex set $V(F)=\{v_1, \dots, v_f\}$ 
		and there is a three-colouring $\phi\colon \partial F\to\{\red,\blue,\green\}$ of the pairs of vertices covered by hyperedges of~$F$ 
		such that every hyperedge $\{v_i,v_j,v_k\}\in E(F)$ with $i<j<k$ satisfies 
		\[
			\phi(v_i,v_j)=\red,\quad \phi(v_i,v_k)=\blue, \qand \phi(v_j,v_k)=\green.
		\]
\end{enumerate}
\end{thm}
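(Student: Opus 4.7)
The direction (a) $\Rightarrow$ (b) is essentially a contrapositive of the observation made in Example~\ref{exmp:null}. If (b) fails for $F$, then the palette $\ccP = \{(\red, \blue, \green)\}$ over $\Phi = \{\red, \blue, \green\}$ supports no ordering-respecting embedding of $F$, so every hypergraph of the form $H^\ccP_\phi$ is $F$-free. Since $\ccP$ is $(\tfrac{1}{27}, \vvv)$-dense, a standard Chernoff / first-moment estimate over all triples $A, B, C \subseteq [n]$ shows that a uniformly random colouring $\phi \colon [n]^{(2)} \to \Phi$ yields, with positive probability (indeed asymptotically almost surely), a hypergraph $H^\ccP_\phi$ that is $(\tfrac{1}{27}, \eta, \vvv)$-dense for any fixed $\eta > 0$ and all sufficiently large $n$. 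Hence $\pivvv(F) \geq \tfrac{1}{27} > 0$, contradicting (a).

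The converse (b) $\Rightarrow$ (a) is the substantive direction. Fix $d > 0$; one must produce $\eta > 0$ and $n_0 \in \NN$ such that every $(d, \eta, \vvv)$-dense hypergraph $H$ on $n \geq n_0$ vertices contains a copy of $F$. My plan is to apply the hypergraph regularity method to $H$ and then use the colouring $\phi$ from (b) to drive an embedding count. Apply the $3$-uniform hypergraph regularity lemma to $H$ to produce a vertex partition $V(H) = V_1 \dcup \cdots \dcup V_t$ with $t = t(d,\eta)$ together with an auxiliary partition of the pairs into a regular polyad structure. By the $(d, \eta, \vvv)$-density hypothesis, essentially every triad of clusters supports a 3-uniform hypergraph of relative density close to $d$.

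Next, introduce a uniformly random $3$-colouring $\psi \colon V(H)^{(2)} \to \{\red, \blue, \green\}$ of the pairs of $V(H)$. With high probability this splits the bipartite graph between any two clusters into three quasi-random classes $P_{\red}, P_{\blue}, P_{\green}$ of relative density $\tfrac13$ each. Now assign each vertex $v_i \in V(F)$ to a cluster $V_{a_i}$ of the partition (chosen so that the triads corresponding to edges of $F$ are regular) and, for every pair $\{v_i, v_j\} \in \partial F$, restrict the bipartite space between $V_{a_i}$ and $V_{a_j}$ to the colour class $P_{\phi(v_i, v_j)}$. For every edge $\{v_i, v_j, v_k\} \in E(F)$ with $i < j < k$, the three restricted bipartite graphs together with the underlying triad density give, by the $(d, \eta, \vvv)$-density assumption and the hypergraph counting lemma, an $\Omega(d \cdot n^3 / 27 t^3)$ count of triangles $(x_i, x_j, x_k)$ with $\{x_i, x_j, x_k\} \in E(H)$. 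Iterating the counting lemma across all edges of $F$ then yields $\Omega(n^f)$ ordered, colour-consistent embeddings of $F$ into $H$, far more than enough to produce a single copy.

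The main obstacle is the compatibility of the colour constraints across different edges of $F$. A pair $\{v_i, v_j\} \in \partial F$ may lie in several edges of $F$, and the global choice of colour class $P_{\phi(v_i, v_j)}$ must simultaneously serve every such edge. Without condition (b) the required pair-colours would disagree between edges, forcing $\phi$ to be multi-valued and breaking the scheme. Condition (b) is designed precisely so that a single global assignment $\phi$ is consistent for all edges at once, which is exactly what allows the counting lemma to be applied uniformly over $E(F)$ and to deliver a copy of $F$ in $H$.
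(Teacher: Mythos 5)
Your proof of the direction \ref{zero:a}$\Rightarrow$\ref{zero:b} is correct and coincides with what the paper records in Example~\ref{exmp:null}: if~\ref{zero:b} fails, the rainbow palette $\ccP=\{(\red,\blue,\green)\}$ produces $F$-free hypergraphs $H^{\ccP}_\phi$ that are asymptotically almost surely $(\tfrac1{27},\eta,\vvv)$-dense, whence $\pivvv(F)\ge\tfrac1{27}$. Note, however, that the paper does not itself prove the substantive direction \ref{zero:b}$\Rightarrow$\ref{zero:a}; it cites~\cite{RRS-zero} for this (see the last sentences of Example~\ref{exmp:null}), so you cannot be matching the paper's proof here.

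For \ref{zero:b}$\Rightarrow$\ref{zero:a}, your proposed argument has a genuine gap. The plan is to regularise $H$, then introduce a \emph{uniformly random} three-colouring $\psi$ of the pairs of $V(H)$, restrict the bipartite graph between each pair of clusters to the colour class $P_{\phi(v_i,v_j)}$, and apply the counting lemma to the resulting ``colour triads.'' This cannot work as stated, for two reasons. First, the $\vvv$-density hypothesis only controls $|E_{\vvv}(A,B,C)|$ for \emph{vertex} subsets $A,B,C$; it gives no control over the density of $H$ relative to triads built from arbitrary bipartite graphs (indeed the entire hierarchy $\piee\le\piev\le\pivvv$ exists because $\vvv$-density is strictly weaker than the other notions, and R\"odl's example shows a $(\tfrac12,\eta,\vvv)$-dense hypergraph in which many bipartite-graph triads carry no edges at all). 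Second, the counting/embedding lemma (Theorem~\ref{thm:EL}) requires the underlying bipartite graphs to be $(\delta_2(\ell),1/\ell)$-quasirandom \emph{at the polyad level} and $H$ to be $\delta_3$-regular with respect to the corresponding triads. Your colour classes have density $\tfrac13$ and are unions of pieces of many polyads; $H$ is neither regular with respect to them in the required sense, nor is there any reason for the density pattern of $H$ (the values $d(H|P^{ijk}_{\alpha\beta\gamma})$, many of which may be $0$) to align with a random colouring that is independent of the regularity partition.

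What is actually needed, and what the paper's own machinery points to, is to pass to the reduced hypergraph level via Theorem~\ref{thm:33}: one must show that $\pird_{\vvv}(F)=0$, that is, for every $d>0$ there is an $m$ such that every $(d,\vvv)$-dense reduced hypergraph on $m$ indices contains a \emph{reduced image} of~$F$ in the sense of Definition~\ref{dfn:rm}. It is in this purely combinatorial statement about reduced hypergraphs that condition~\ref{zero:b} must be used: the ordering and the colouring $\phi\colon\partial F\to\{\red,\blue,\green\}$ guide an iterative selection of the vertices $P^{ij}\in\cP^{ij}$ so that every constituent $\ccA^{ijk}$ carrying an edge of $F$ is hit in an edge, in the spirit of the extraction argument sketched in the last section for $\piee(K^{(3)}_4)=0$, but technically harder because only $\vvv$-density (i.e., density of whole constituents, with no minimum degree control) is available. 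The regularity lemma, the treatment of irregular triads (Proposition~\ref{prop:irregular}), and the embedding lemma then convert a reduced image into a genuine copy of $F$ in $H$. Your write-up never produces a reduced image or an equivalent polyad-level selection, which is the crux of the matter and is precisely what condition~\ref{zero:b} is for.
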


\begin{exmp} \label{ex:tourn}
As indicated by the discussion in the second paragraph of this section, the tournament 
hypergraph can be defined by the $(\tfrac 14, \vvv)$-dense palette 
\[
	\ccP=\{(\to, \leftarrow, \to), (\leftarrow, \to, \leftarrow)\}
\]
over $\Phi=\{\to, \leftarrow\}$. The proof of Theorem~\ref{thm:jems} presented in~\cite{RRS-a}
proceeds by showing that for $n^{-1} \ll \eta \ll \eps$ every 
$(\tfrac 14+\eps, \eta, \vvv)$-dense hypergraph on $n$ vertices possesses a vertex whose 
link graph contains a triangle. It thus seems natural to wonder whether similar ideas can 
be used to settle the value of $\pivvv(F)$ for all hypergraphs $F$ having a special vertex
contained in every edge. Given a graph $G$, let us call the hypergraph obtained from $G$
by adding a new vertex $\infty$ having all triples $\infty vw$ with $vw\in E(G)$ as edges 
the {\it cone over $G$}, denoted by~$CG$. So $K^{(3)-}_4 = CK_3$ and the question is what 
one can say about $\pivvv(CG)$ in general. This problem is already very interesting when $G$ 
is a clique. Concerning {\it stars} $S_k = CK_k$ 
the proof in~\cite{RRS-a} shows more generally that
\[
	\pivvv(S_k) \le \left(\frac{k-2}{k-1}\right)^2      
\]
holds for all $k\ge 2$, but it remains unclear at this moment whether this is sharp 
for any~$k\ge 4$. The $(\frac13, \vvv)$-dense palette 
\[
	\ccP= \{(1, 2, 1), (1, 3, 1), (2, 1, 2), (2, 3, 2), (3, 1, 3), (3, 2, 3), 
				(1, 2, 3), (2, 3, 1), (3, 1, 2)\}
\]
over $\Phi=\{1, 2, 3\}$ establishes the 
lower bound $\pivvv(S_4) \ge \frac 13$ and a generalisation of this idea leads
to
\[
	\pivvv(S_k)\ge \frac{k^2-5k+7}{(k-1)^2}
\]
for all $k\ge 3$ (see~\cite{RRS-a}*{Section~5.3.1}).
\end{exmp}

\begin{exmp} \label{ex:vojta}
R\"odl's hypergraph, let us recall, is defined by the $(\frac 12, \vvv)$-dense palette
\[
	\ccP=\{(\red, \green, \red), (\red, \green, \green), 
		(\green, \red, \red), (\green, \red, \green)\}
\]
over $\Phi=\{\red, \green\}$ and establishes $\pivvv(K^{(3)}_4)\ge\frac 12$. More generally, 
given a set $\Phi$ consisting of $r\ge 2$ colours one may use the palette 
\[
	\ccP=\{(\alpha, \beta, \gamma)\in \Phi^3\colon \alpha \ne \beta\}
\]
for showing 
\[
	\pivvv(K^{(3)}_{r+2}) \ge \frac{r-1}{r}\,.
\]

It would be exciting if equality turned out to hold here for all $r\ge 2$. It should be 
pointed out, however, that if this is true it might be much more difficult 
to prove than Conjecture~\ref{conj:wahres1/2}, as for $r=4$ there is a second, apparently 
sporadic, construction that yields the lower bound $\pivvv(K^{(3)}_{6})\ge \frac 34$
as well.
Namely, one takes the palette over $\{\red, \green\}$ containing all six colour 
patterns involving both colours (see~\cite{RRS-a}*{Section 5.1}). This construction
works because of $6\longrightarrow (3)^2_2$. However, we are probably just exploiting 
a numerical coincidence here and it seems unlikely that similar Ramsey theoretic 
constructions are relevant to the problem of determining $\pivvv(K^{(3)}_{r+2})$
(but see also Example~\ref{ex:ee11}).
\end{exmp}

\begin{exmp}\label{ex:cycle}
Finally, we briefly discuss the case where $F=C^{(3)}_5$ is a cycle of length five, 
i.e., $V(C^{(3)}_5)=\ZZ/5\ZZ$ and 
$E(C^{(3)}_5)=\big\{\{i, i+1, i+2\}\colon i\in \ZZ/5\ZZ\bigr\}$.
The lower bound $\pivvv(C^{(3)}_5)\ge \frac 4{27}$ can be shown by using the set of colours 
$\Phi=\{\rm{dark\, red}, \rm{light\, red}, \rm{green}\}$ and the palette consisting of all four 
colour patterns of the form $(\red, \red, \green)$, where ``$\red$'' means either 
``$\rm{dark\, red}$'' or ``$\rm{light\, red}$''. 
As far as we know, no interesting upper bound on $\pivvv(C^{(3)}_5)$
has ever been obtained. 
\end{exmp}

The last example suggests that occasionally it may be more convenient to work with a weighted
version of the concepts introduced so far. Let us say that a {\it weighted set of colours}
is a pair $(\Phi, w)$ consisting of a finite nonempty set of colours $\Phi$ and a weight 
function~$w\colon \Phi \to [0, 1]$ with the property $\sum_{\gamma\in \Phi}w(\gamma)=1$. 
If no weight 
function has been specified, we imagine that $w(\gamma)=|\Phi|^{-1}$ for all $\gamma\in\Phi$
is implicitly understood. Now when we have a palette $\ccP\subseteq \Phi^3$ over such a weighted
set of colours $(\Phi, w)$ we say that~$\ccP$ is {\it $(d, \vvv)$-dense} if 
$\sum_{(\alpha, \beta, \gamma)\in \ccP}w(\alpha)w(\beta)w(\gamma)\ge d$. In an obvious sense, 
this extends the meaning of being~{$(d, \vvv)$-dense} introduced earlier. Now instead of 
artificially talking about dark and light red in Example~\ref{ex:cycle} we could have just said
that we consider the weighted set $\Phi=\{\red, \green\}$ with $w(\red)=\frac 23$ and 
$w(\green)=\frac 13$, as well as the palette $\ccP=\{(\red, \red, \green)\}$. 

As long as the values attained by our weight function $w$ are rational numbers, 
it remains, of course, purely a matter of taste whether one prefers weighted sets of colours 
or whether one rather wants to speak about different shades of colours that are 
somewhat immaterial to the definition of the palette. It is an interesting open question, 
however, whether allowing irrational weights of the colours can ever give rise to an 
optimal lower bound on $\pivvv(F)$ for any hypergraph $F$. 

This roughly exhausts the lower bound constructions for $\pivvv(\cdot)$ that have been used 
so far, and we proceed with a discussion of $\piev(\cdot)$. Returning for simplicity to the 
unweighted setting, we say that a palette $\ccP$ over a set of colours $\Phi$ is 
{\it $(d, \ev)$-dense} for a real number~$d\in [0, 1]$ provided that 
\begin{center}
\begin{tabular}{c|c}
%\hline 
for every & we have  \\ \hline 
\rule{0pt}{3ex}  $\alpha\in\Phi$ &  $\big|\{(\beta, \gamma)\in \Phi^2
		\colon (\alpha, \beta, \gamma)\in \ccP\}\big|\ge d |\Phi|^2$\,, \\
\rule{0pt}{3ex}  $\beta\in\Phi$ &  $\big|\{(\alpha, \gamma)\in \Phi^2
		\colon (\alpha, \beta, \gamma)\in \ccP\}\big|\ge d |\Phi|^2$\,, \\
\rule{0pt}{3ex}  $\gamma\in\Phi$ &  $\big|\{(\alpha, \beta)\in \Phi^2
		\colon (\alpha, \beta, \gamma)\in \ccP\}\big|\ge d |\Phi|^2$\,. \\
\end{tabular}
\end{center}

\vskip.5em
Again easy probabilistic arguments show that whenever a palette $\ccP$ is $(d, \ev)$-dense,
and a colouring $\phi$ gets chosen uniformly at random, then for every $\eta>0$ the 
hypergraph $H^\ccP_\phi$ defined in~\eqref{eq:HphiP} is asymptotically almost surely 
$(d, \eta, \ev)$-dense. Thus lower bounds on $\piev(F)$ can be established almost 
in the same way as for $\pivvv(F)$, the only additional thing that needs to be checked 
being whether the palette one uses satisfies the three conditions in the above table. 

For instance, the palettes we referred to in the Examples~\ref{ex:tourn} and~\ref{ex:vojta}
are easily verified to be $\ev\,$-dense for the expected values of $d$. Hence the 
lower bounds on $\pivvv(\cdot)$ obtained there apply to $\piev(\cdot)$ as well. 
In particular, we learn         
\[
	\piev(S_k)\ge \frac{k^2-5k+7}{(k-1)^2}
\]
for every $k\ge 3$ and 
\[
	\piev(K^{(3)}_{r+2}) \ge \frac{r-1}{r}
\]
for every $r\ge 2$. But with the exception of Theorem~\ref{thm:jctb50} 
(and Theorem~\ref{thm:jems}) it is not known whether equality holds here either. 
The reader might briefly wonder at this point whether~$\pivvv(F)$ and $\piev(F)$ 
agree for all hypergraphs $F$. But in unpublished work with R\"{o}dl and Schacht 
it was shown that $\pivvv(F) >\piev(F)=0$ holds for some hypergraph $F$. 
Moreover, we obtained an explicit description of the class $\{F\colon \piev(F)=0\}$.
 
 The story of $\piee(\cdot)$ starts similarly, but the few results that have 
been obtained so far seem to suggest that this generalised Tur\'an density behaves
quite differently. To begin with, given a real number $d\in [0, 1]$ and a palette $\ccP$
over a set of colours $\Phi$, we say that $\ccP$ is {\it $(d, \ee)$-dense} if  
\begin{center}
\begin{tabular}{c|c}
%\hline 
for all & we have  \\ \hline 
\rule{0pt}{3ex}  $\alpha, \beta \in\Phi$ &  $\big|\{\gamma\in \Phi
		\colon (\alpha, \beta, \gamma)\in \ccP\}\big|\ge d |\Phi|$\,, \\
\rule{0pt}{3ex}  $\alpha, \gamma \in\Phi$ &  $\big|\{\beta\in \Phi
		\colon (\alpha, \beta, \gamma)\in \ccP\}\big|\ge d |\Phi|$\,, \\
\rule{0pt}{3ex}  $\beta, \gamma\in\Phi$ &  $\big|\{\alpha\in \Phi
		\colon (\alpha, \beta, \gamma)\in \ccP\}\big|\ge d |\Phi|$\,. \\
\end{tabular}
\end{center}

\vskip.5em
For clarity we emphasise that the two colours, $\alpha$, $\beta$, etc. mentioned 
in the left column of this table are allowed to be identical. Now again 
standard probabilistic arguments show that if $\ccP$ is $(d, \ee)$-dense, then for 
every $\eta>0$ the hypergraph $H^\ccP_\phi$ is asymptotically almost surely 
$(d, \eta, \ee)$-dense 
and this principle can be used in the standard way for producing lower bounds 
on $\piee(F)$ for many hypergraphs $F$. 

All palettes $\ccP$ used in this connection so far are {\it symmetrical}
in the sense that for every pattern $(\gamma_1, \gamma_2, \gamma_3)\in \ccP$ and every 
permutation $\sigma\in S_3$ one has 
$(\gamma_{\sigma(1)}, \gamma_{\sigma(2)}, \gamma_{\sigma(3)})\in \ccP$.
In other words, this means that permuting the entries of a triple does not affect 
its membership in the palette.  For symmetrical palettes any two of our three conditions 
are equivalent to each other, which reduces the amount of work one needs for checking them 
by a factor of three.\footnote[1]{It is for this reason that in~\cite{RRS-d}*{Section~13.1.3} 
only symmetrical palettes were introduced. Therefore, when writing~\cite{RRS-d}, it 
seemed more convenient to define palettes as collections of {\it multisets} of colours
instead of ordered triples, but it is unlikely that this will cause any confusion.}

When specifying a symmetrical palette, it is convenient to enumerate only a small
proportion of its colour patterns from which the remaining ones can be deduced owing
to the symmetry condition. More precisely, given an arbitrary palette $\ccP\subseteq \Phi^3$
we call the inclusion-wise minimal symmetrical palette containing $\ccP$ the {\it symmetrical 
palette generated by~$\ccP$}. One may observe that the three symmetrical palettes in the 
examples that follow possess some further symmetries induced by permutations of colours.   

\begin{exmp} \label{ex:ee5}
The symmetrical palette over $\{1, 2, 3\}$ generated by 
\[
	\{(1, 1, 2), (2, 2, 3), (3, 3, 1)\}
\]
is $(\frac 13, \ee)$-dense and shows $\piee(K^{(3)}_5)\ge \frac 13$ 
(see \cite{RRS-d}*{Section 13.1.3}).
\end{exmp}

\begin{exmp} \label{ex:ee6}
Similarly, the  symmetrical palette over $\{1, 2\}$ generated by $\{(1, 1, 2), (1, 2, 2)\}$
is $(\frac 12, \ee)$-dense and, due to the well-known Ramsey theoretic fact 
$6\longrightarrow (3)^2_2$, this proves that $\piee(K^{(3)}_6)\ge \frac 12$. 
\end{exmp}
 
\begin{exmp} \label{ex:ee11}
Finally, the  symmetrical palette over $\{1, 2, 3\}$ generated by 
\[
	\{(1, 1, 2), (1, 1, 3), (2, 2, 1), (2, 2, 3), (3, 3, 1), (3, 3, 2)\}
\]
is $(\frac 23, \ee)$-dense and because of a Ramsey theoretic result due to 
Chung and Graham~\cite{CG83} this proves $\piee(K^{(3)}_{11})\ge \frac 23$. 
\end{exmp}

The main result of~\cite{RRS-d} provides an upper bound on the $\ee\,$-Tur\'an-densities 
of cliques that turns out to be sharp in surprisingly many small cases. 

\begin{thm}\label{thm:fortress}
For every integer $r\ge 2$ one has $\piee(K_{2^r})\le \frac{r-2}{r-1}$.
\end{thm}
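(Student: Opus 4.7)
I would prove the theorem by induction on $r$, using a ``doubling'' scheme that mirrors the telescoping identity
\[
\tfrac{r-2}{r-1}=\tfrac{r-3}{r-2}+\tfrac{1}{(r-1)(r-2)}\,.
\]

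\emph{Base case} $r=2$: the statement becomes $\piee(K_4^{(3)})=0$, i.e.\ every $(d,\eta,\ee)$-dense hypergraph with $d>0$ and $\eta$ sufficiently small contains $K_4^{(3)}$. I would prove this by a direct cascade of the $\ee$-density applied to the link structure of pairs: $d>0$ forces $\Omega(n^3)$ edges, hence many pairs with links of linear size, and two further applications of the density condition produce a common fourth vertex completing a tetrahedron.

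\emph{Inductive step}: assume $\piee(K_{2^{r-1}}^{(3)})\le\tfrac{r-3}{r-2}$, and let $H=(V,E)$ be $(d,\eta,\ee)$-dense on $n$ vertices with $d>\tfrac{r-2}{r-1}$ and $\eta$ chosen small in terms of the slack $\gamma=d-\tfrac{r-2}{r-1}>0$. First, via a supersaturation form of the inductive hypothesis, find a ``robust'' copy of $K_{2^{r-1}}^{(3)}$ on a set $A\subseteq V$ such that every pair $\{x,y\}\in\binom{A}{2}$ has link $L_{xy}=\{w\in V\colon\{x,y,w\}\in E\}$ of size at least $(1-\delta)n$, where $\delta$ is a small function of $\gamma$. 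Second, set $W=\bigl(\bigcap_{\{x,y\}\in\binom{A}{2}}L_{xy}\bigr)\setminus A$, which by the union bound has size at least $(1-O_r(\delta))n$. Third, since $H[W]$ inherits $(d',\eta',\ee)$-density for some $d'>\tfrac{r-3}{r-2}$, the inductive hypothesis produces a second copy of $K_{2^{r-1}}^{(3)}$ on a set $B\subseteq W$; a refinement of this selection that enforces $A\subseteq L_{yz}$ for every pair $\{y,z\}\in\binom{B}{2}$ then makes $A\cup B$ span $K_{2^r}^{(3)}$.

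The main obstacle is precisely this last refinement: guaranteeing that the second copy $B$ satisfies the cross-compatibility $A\subseteq L_{yz}$ for every $\{y,z\}\in\binom{B}{2}$, rather than merely sitting inside $H[W]$. This forces an upgrade of the inductive hypothesis to a \emph{relative} form, producing $K_{2^{r-1}}^{(3)}$ inside a hypergraph whose pairs additionally see a prescribed external vertex set $A$. The threshold $\tfrac{r-2}{r-1}$ is exactly what makes the iteration go through quantitatively: it coincides with the classical graph Tur\'an density $\pi(K_r)$, and from the $\ee$-density of $H$ one can extract an auxiliary $2$-graph on $V$ (encoding which pairs have near-complete links into $A$) whose density exceeds $\tfrac{r-2}{r-1}$, so that Tur\'an's theorem supplies the $K_r$ that seeds each doubling step.
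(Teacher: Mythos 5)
Your doubling intuition---$\piee(K_{\ell}^{(3)})\le\alpha \Rightarrow \piee(K_{2\ell}^{(3)})\le \frac{1}{2-\alpha}$, with $\frac{1}{2-\frac{r-3}{r-2}}=\frac{r-2}{r-1}$---does match the spirit of the proof in~\cite{RRS-d} (cf.\ the remark after Example~\ref{ex:ee11}). But the execution has a gap: you argue directly on the $(d,\eta,\ee)$-dense hypergraph $H$, where the hypothesis $|E_{\ee}(P,Q)|\ge d\,|\cK_{\ee}(P,Q)|-\eta|V|^3$ is vacuous whenever $|\cK_{\ee}(P,Q)|\lesssim\eta|V|^3$. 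Consequently the condition says nothing about links of individual pairs, and your central step---a copy $A$ of $K_{2^{r-1}}^{(3)}$ all of whose pairs have links of size at least $(1-\delta)n$---is not only unjustified but impossible in general: in the extremal palette constructions (Examples~\ref{ex:ee6}, \ref{ex:ee11}) every pair's link has size close to $dn$, so no pair has a near-full link when $d$ is bounded away from $1$. Replacing $(1-\delta)n$ by the honest $\approx dn$ then breaks the union bound for $W=\bigcap L_{xy}\setminus A$, since $A$ has $\binom{2^{r-1}}{2}$ pairs and $\bigl(1-\binom{2^{r-1}}{2}(1-d)\bigr)n$ is negative already for small $r$.

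The same problem undermines your base case sketch: each ``application of the density condition'' to linear-sized neighbourhoods incurs an additive loss of $\eta n^3$, which swamps the quadratic main term. This is exactly why the paper passes through Theorem~\ref{thm:33}: after hypergraph regularity one works in a reduced hypergraph $\ccA$, where $(d,\ee)$-denseness becomes a clean minimum pair-degree condition on the constituents with no additive error, and the iterative shrinking argument on an index set $I=X\dcup Y$ with $Y$ much larger than $X$ then closes, as sketched in the final section for the tetrahedron. Your proposed repair of the cross-compatibility problem---an auxiliary graph of density $>\frac{r-2}{r-1}$ plus Tur\'an's theorem yielding a $K_r$---also leaves a type mismatch unaddressed: a graph clique on $r$ vertices is not a hypergraph $K_{2^{r-1}}^{(3)}$ on $2^{r-1}$ vertices satisfying the required cross-constraints with $A$, and you give no mechanism for converting the former into the latter.
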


Together with the Examples~\ref{ex:ee5}--\ref{ex:ee11} this yields 
\begin{eqnarray*}
	\piee(K_4^{(3)}) = &0,&	\nonumber\\
	&\frac{1}{3}&\leq \piee(K_5^{(3)}),	\nonumber\\
	\piee(K_6^{(3)})
	=
	\piee(K_7^{(3)})
	=
	\piee(K_8^{(3)})
	=
	&\frac{1}{2}&
	\leq
	\piee(K_9^{(3)})
	\leq 
	\piee(K_{10}^{(3)}),	\\
	\piee(K_{11}^{(3)})
	=
	\dots
	=
	\piee(K_{16}^{(3)})
	=
	&\frac{2}{3}&\,,
	\nonumber
\end{eqnarray*}
i.e., the exact value of $\piee(K^{(3)}_t)$ for all $t \le 16$ with the exception 
of $t = 5, 9, 10$. It seems likely that if $\piee(K^{(3)}_5)=\tfrac 13$ turned 
out to be true, then the methods of~\cite{RRS-d} would allow to prove 
$\piee(K^{(3)}_{10})\le\tfrac 35$ as well. More generally, there are some good reasons to believe 
that $\piee(K^{(3)}_\ell)=\alpha$ implies $\piee(K^{(3)}_{2\ell})\le \frac 1{2-\alpha}$. 

\section{Reduced hypergraphs}

It is currently open whether all extremal hypergraphs for $\pivvv$, $\piev$, and $\piee$
can be derived from palettes, i.e., whether they are of the form $H^{\ccP}_\phi$. 
There is, however, a slightly more general method to construct
$(d, \eta, \star)$-dense hypergraphs with $\star\in\{\vvv, \ev, \ee\}$, for which such a result 
can be proved. This construction relies on so-called {\it reduced hypergraphs} that are going
to be introduced next. 

The main new idea is that when we have an ordered vertex set $(V, <)$ as well as a colouring 
$\phi$ of the pairs in $V^{(2)}$, then in hypergraphs of the form $H^{\ccP}_\phi=(V, E)$
the presence or absence of a triple $xyz$ with $x<y<z$ in $E$ depends entirely on the 
colours received by the pairs $xy$, $xz$, and $yz$ without taking the relative positions 
of $x$, $y$, and $z$ in the linear ordering $<$ into account. 
But one could imagine, for instance, hypergraphs with 
vertex set~$[2n]$ for some huge $n\in\NN$, where for converting colour patterns observed 
on pairs into edges there is one rule applying to triples with two vertices 
in $[n]$ and a completely different rule for triples with two vertices 
in $[n+1, 2n]$.

Reduced hypergraphs can be thought of as a framework for capturing the combinatorial core of 
all such constructions. Let us consider a finite set $I$ of {\it indices}. Suppose that 
to any pair of distinct indices $i, j\in I$ there has been assigned a finite nonempty set 
of vertices~$\cP^{ij}=\cP^{ji}$, and that for distinct pairs of indices the corresponding 
vertex sets are disjoint. 
Finally, assume that for every triple of indices $ijk\in I^{(3)}$ there has been specified 
a $3$-uniform tripartite hypergraph $\ccA^{ijk}$ with vertex classes $\cP^{ij}$, $\cP^{ik}$, 
and $\cP^{jk}$. In such situations we call the $\binom{|I|}{2}$-partite $3$-uniform hypergraph
$\ccA$ with 
\[
	V(\ccA)=\bigdcup_{ij\in I^{(2)}} \cP^{ij}
	\quad \text{ and } \quad 
	E(\ccA)=\bigdcup_{ijk\in I^{(3)}} E(\ccA^{ijk})
\]
a {\it reduced hypergraph} with {\it index set} $I$, {\it vertex classes} $\cP^{ij}$, 
and {\it constituents} $\ccA^{ijk}$. 

When translating such a reduced hypergraph into a hypergraph $H=(V, E)$ of Tur\'an 
theoretic significance one starts with a huge vertex set $V$ having an equipartition 
$V=\bigdcup_{i\in I}V_i$ and takes a ``colouring'' of pairs of vertices such that 
for $x\in V_i$ and $y\in V_j$ with $i\ne j$ the pair $xy$ receives uniformly at random 
some element of $\cP^{ij}$ as its ``colour'' $\phi(xy)$. Then for any three vertices 
from distinct partition classes $x\in V_i$, $y\in V_j$, and $z\in V_k$ one decides 
whether $xyz\in E$ should be the case depending on the colours $\phi(xy)$, $\phi(xz)$, 
and $\phi(yz)$ by using the constituent $\ccA^{ijk}$ as if it were a palette; 
so explicitly one demands
\[
	xyz\in E 
	\,\,\, \Longleftrightarrow \,\,\, 
	\{\phi(xy), \phi(xz), \phi(yz)\}\in E(\ccA^{ijk})\,.
\]

Next we need to express our density conditions in terms of reduced hypergraphs. 
The definition that follows is easy to remember. Intuitively it just tells us that 
$\vvv$, $\ev$, and $\ee$ correspond to ordinary density, a minimum vertex degree condition, 
and a minimum pair degree condition for the constituents of~$\ccA$, respectively.\footnote[1]{In 
the same way, the case $\eee$ dismissed at the the end of Section~\ref{subsec:13} 
would correspond to a minimum triple degree condition or, in other words, to the 
condition that all constituents be complete tripartite hypergraphs (if $d>0$). 
This is, of course, related to the fact that $\eee$-dense hypergraphs of positive density 
contain everything, i.e., that $\pieee(F)=0$ holds for every hypergraph $F$.}   

\begin{dfn}
	Let $\ccA$ denote a reduced hypergraph with index set $I$, vertex classes $\cP^{ij}$, 
	and constituents $\ccA^{ijk}$, and let $d\in [0, 1]$ be a real number.
	\begin{enumerate}[label=\rmlabel]
		\item\label{it:rdvvv} If $e(\ccA^{ijk})\ge d|\cP^{ij}||\cP^{ik}||\cP^{jk}|$
			holds for any three distinct indices $i, j, k\in I$ we say that $\ccA$
			is {\it $(d, \vvv)$-dense}. 
		\item\label{it:rdev} Moreover, if for any three distinct indices $i, j, k\in I$
			and every vertex $P^{ij}\in \cP^{ij}$ we have 
			\[
				\big|\bigl\{(P^{ik}, P^{jk})\in \cP^{ik}\times \cP^{jk}\colon
				(P^{ij}, P^{ik}, P^{jk})\in E(\ccA^{ijk})\bigr\}\big|
				\ge d |\cP^{ik}| |\cP^{jk}|\,,
			\]
			then $\ccA$ is called {\it $(d, \ev)$-dense}.
		\item\label{it:rdee} Finally, if for any three distinct indices $i, j, k\in I$
			and all vertices $P^{ij}\in \cP^{ij}$, $P^{ik}\in \cP^{ik}$ 
			we have 
			\[
				\big|\bigl\{P^{jk}\in  \cP^{jk}\colon
				(P^{ij}, P^{ik}, P^{jk})\in E(\ccA^{ijk})\bigr\}\big|
				\ge d |\cP^{jk}|\,,
			\]
			then $\ccA$ is called {\it $(d, \ee)$-dense}.
	\end{enumerate}
\end{dfn}
 
Whether the hypergraphs described by a given reduced hypergraph $\ccA$ are capable of 
containing a given hypergraph $F$ can be expressed in terms of the existence of so-called 
reduced maps, that are going to be introduced next.

\begin{dfn}\label{dfn:rm}
	A {\it reduced map} from a hypergraph $F$ to a reduced hypergraph $\ccA$ with 
	index set $I$, vertex classes $\cP^{ij}$, and constituents $\ccA^{ijk}$ is a pair 
	$(\lambda, \phi)$ such that 
	\begin{enumerate}[label=\rmlabel]
		\item\label{it:rm1} $\lambda\colon V(F)\longrightarrow I$  and  
			$\phi\colon \partial F\longrightarrow V(\ccA)$, where $\partial F$ denotes the 
				set of all pairs of vertices covered by an edge of $F$;
		\item\label{it:rm2} if $uv\in\partial F$, then $\lambda(u)\ne \lambda(v)$ and 
			$\phi(uv)\in\cP^{\lambda(u)\lambda(v)}$;
		\item\label{it:rm3} if $uvw\in E(F)$, then 
			$\{\phi(uv), \phi(uw), \phi(vw)\}
				\in E(\ccA^{\lambda(u)\lambda(v)\lambda(w)})$.
	\end{enumerate}
	
	If some such reduced map exists, we say that $\ccA$ {\it contains a reduced image 
	of $F$}, and otherwise $\ccA$ is called {\it $F$-free}. 
\end{dfn}

Now the main result about the reduced hypergraph construction asserts the following.

\begin{thm}\label{thm:33}
	If $F$ is a hypergraph and $\star\in\{\vvv, \ev, \ee\}$, then 
	\begin{align} \label{eq:pird}
		\pi_\star(F)  = \sup\bigl\{d\in [0, 1]\colon & \text{For every $m\in\NN$ } \text{there 
				is a $(d, \star)$-dense, } \notag \\
		& \text{$F$-free, reduced hypergraph with an index set of size $m$}\bigr\}\,. 
	\end{align}
\end{thm}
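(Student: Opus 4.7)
The plan is to prove both inequalities. Write $\pird_\star(F)$ for the right-hand side of~\eqref{eq:pird}.

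For the \emph{lower bound} $\pi_\star(F)\ge \pird_\star(F)$, I would randomly blow up a given reduced hypergraph. Fix a $(d,\star)$-dense, $F$-free reduced hypergraph $\ccA$ with index set $I$ of size $m$. For large $n$, partition $V=[n]$ into $m$ almost-equal classes $(V_i)_{i\in I}$, and independently assign to each crossing pair $xy$ (with $x\in V_i$, $y\in V_j$, $i\ne j$) a uniformly random colour $\phi(xy)\in \cP^{ij}$. Put $xyz\in E(H)$ precisely when $x,y,z$ lie in three distinct classes $V_i,V_j,V_k$ and $\{\phi(xy),\phi(xz),\phi(yz)\}\in E(\ccA^{ijk})$. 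Then $H$ is deterministically $F$-free: any copy of $F$ in $H$ uses only crossing triples, and the induced class function $\lambda$ together with the colour restriction $\phi|_{\partial F}$ would form a reduced map from $F$ to $\ccA$, contradicting its $F$-freeness. The expected $\star$-density of $H$ agrees with that of $\ccA$ up to an $O(1/m)$ loss coming from triples meeting fewer than three classes, and a routine Chernoff-type concentration argument yields a deterministic realisation with the required $(d,\eta,\star)$-denseness once $m$ is chosen large compared to $1/\eta$.

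For the \emph{upper bound} $\pird_\star(F)\ge \pi_\star(F)$, I would apply the hypergraph regularity method. Pick $d<\pi_\star(F)$ and auxiliary constants $\epsilon\ll\eta\ll d$, and for arbitrarily large $n$ fix an $F$-free, $(d+\epsilon,\eta,\star)$-dense hypergraph $H$ on $n$ vertices. Applying the regular approximation lemma produces an equitable vertex partition $V(H)=V_1\dcup\cdots\dcup V_t$ with $t\ge m$ and, for every pair $ij$, a partition of $V_i\times V_j$ into $\epsilon$-regular bipartite graphs $\cP^{ij}$, such that all but an $\epsilon$-fraction of the resulting triads $(P^{ij},P^{ik},P^{jk})$ are regular with respect to $H$ and carry a well-defined relative density. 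Let $\ccA$ be the reduced hypergraph with index set $[t]$, vertex classes $\cP^{ij}$, and constituents $\ccA^{ijk}$ consisting of those triads that are simultaneously regular and of relative $H$-density at least $d+\epsilon/2$. The $\star$-density of $H$ then transfers to $\ccA$ by averaging: for $\vvv$ one tests the $\vvv$-condition with $A=V_i$, $B=V_j$, $C=V_k$, and for $\ev$ or $\ee$ one takes $A$ and $P$, or $P$ and $Q$, to be unions of the bipartite graphs representing a prescribed vertex (or pair) of $\ccA$. Conversely $\ccA$ must be $F$-free: any reduced map from $F$ to $\ccA$ picks out a pattern of $\epsilon$-regular dense triads, and the counting/embedding lemma then produces many honest copies of $F$ inside $H$, contradicting $F$-freeness of $H$.

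The technical heart is the upper bound, and the main obstacle is matching the $\star$-density of $H$ to the correct \emph{minimum-degree} conditions on each constituent $\ccA^{ijk}$ for $\star\in\{\ev,\ee\}$. Unlike the $\vvv$-case, which is a single averaging step, these require choosing test configurations that isolate a prescribed vertex $P^{ij}\in\cP^{ij}$ (or a prescribed pair of vertices) of the reduced hypergraph, and then absorbing the losses from irregular or sparse triads into the $\epsilon/2$ slack. Getting the parameter hierarchy right so that the same reduced hypergraph simultaneously witnesses both the $(d,\star)$-density and the $F$-freeness is where most of the bookkeeping will live.
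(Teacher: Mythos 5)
Your overall architecture agrees with the paper: a random blow-up of an $F$-free, $(d,\star)$-dense reduced hypergraph for the direction $\pird_\star(F)\le\pi_\star(F)$ (the paper's Proposition~\ref{prop:1}), and the hypergraph regularity lemma plus embedding lemma for the direction $\pi_\star(F)\le\pird_\star(F)$ (Proposition~\ref{prop:3}). But there is a genuine gap in your treatment of the upper bound.

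You propose to build a single reduced hypergraph whose constituents collect those triads that are \emph{simultaneously} regular and of relative density at least~$d_3$, and then to transfer the $\star$-density of $H$ to this reduced hypergraph, ``absorbing the losses from irregular or sparse triads into the $\epsilon/2$ slack.'' This absorption fails. The regularity lemma bounds only the \emph{total} number of irregular triads by $\delta_3 t^3\ell^3$, not the number per constituent. Since each constituent contains only $\ell^3$ triads and $\delta_3 t^3\gg 1$ for large~$t$, a priori all triads of some constituents $\ccA^{ijk}$ could be irregular; worse, for $\ev$ (resp.\ $\ee$) all triads extending a fixed $P^{ij}\in\cP^{ij}$ (resp.\ a fixed pair) could be irregular. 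In that case the corresponding minimum-degree condition in that constituent fails outright, and no amount of $\epsilon$-slack in the density of~$H$ can repair it.

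The paper resolves this with a two-step separation that your plan omits. It first forms a reduced hypergraph $\ccA$ using density \emph{alone}, so that the $\star$-density transfers cleanly from~$H$ to~$\ccA$ (this is the computation~\eqref{eq:52}), and then a subhypergraph~$\ccB\subseteq\ccA$ by additionally demanding regularity. The key missing idea in your proposal is Proposition~\ref{prop:irregular}: if one deletes at most $\delta_3 t^3\ell^3$ edges from a dense reduced hypergraph, one can pass to a random sub-index-set $J$ of fixed size $m$ on which no triple $ijk\in J^{(3)}$ is ``useless'' (i.e.\ has had more than a $\xi$-fraction of its edges deleted), and then one needs Lemmas~\ref{lem:41} and~\ref{lem:42} to handle the remaining per-vertex and per-pair degree degradation in the $\ev$ and $\ee$ cases. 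Without this random restriction to a good index set, the density transfer to the regular part is simply false.

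As a smaller point, your lower-bound sketch appeals to a ``routine Chernoff-type concentration argument'' to obtain $(d,\eta,\star)$-denseness of the blown-up hypergraph. Since $|E_{\ee}(P,Q)|$ for the $2^{|V|^2}$ choices of $P,Q\subseteq V^2$ is a sum of dependent indicators (two triples sharing a pair share randomness), a direct Chernoff bound is delicate; the paper instead demands quasirandomness of each bipartite colour class $G(P^{ij})$ (a genuine Chernoff application) and then derives $(d,\eta,\star)$-denseness deterministically via the triangle counting lemma. Your plan here is fine in spirit but would need to be carried out in that cleaner indirect way.
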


Large parts of the proof of this result are implicit in~\cites{RRS-a, RRS-c, RRS-d, RRS-zero}.
Still, we believe it to be useful to gather the argument in its entirety in the remainder of 
this section and the two subsequent sections. To this end, we shall temporarily denote the 
right side of~\eqref{eq:pird} by~$\pird_\star(F)$, where the superscript ``$\mathrm{rd}$'' 
means ``reduced''.

The inequality $\pird_\star(F) \le \pi_\star(F)$, proved in Proposition~\ref{prop:1}
below, simply expresses the fact that the narrative of this section does indeed indicate 
a valid strategy for establishing lower bounds on $\pi_\star(F)$ by means of 
reduced hypergraphs. The proof of the other direction, $\pird_\star(F) \ge \pi_\star(F)$, 
requires more involved reasoning based on the hypergraph regularity method. 
 
\begin{prop} \label{prop:1}
	For every hypergraph $F$ and every symbol $\star\in \{\vvv, \ev, \ee\}$ we have 
	\[
		\pird_\star(F) \le \pi_\star(F)\,.
	\]
\end{prop}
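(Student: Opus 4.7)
The plan is to verify that the random blow-up construction described in the third paragraph of this section actually produces hypergraphs witnessing $\pi_\star(F)\ge d$ for every $d<\pird_\star(F)$. So I fix $d<\pird_\star(F)$, $\eta>0$, and $n\in\NN$, and aim to exhibit an $F$-free, $(d,\eta,\star)$-dense hypergraph on at least $n$ vertices. Choosing an integer $m$ sufficiently large as a function of $\eta$, I invoke the definition of $\pird_\star(F)$ to obtain an $F$-free, $(d,\star)$-dense reduced hypergraph $\ccA$ with index set $I=[m]$, vertex classes $\cP^{ij}$, and constituents $\ccA^{ijk}$.

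For the construction I take a vertex set $V$ of sufficiently large size $N$, equipartition it as $V=V_1\dcup\cdots\dcup V_m$, and for every pair $\{x,y\}$ with $x\in V_i$, $y\in V_j$, $i\ne j$, draw $\phi(xy)\in\cP^{ij}$ independently and uniformly at random. The hypergraph $H=(V,E)$ is then defined exactly as prescribed: a triple $\{x,y,z\}$ with $x\in V_i$, $y\in V_j$, $z\in V_k$ lies in $E$ iff $i,j,k$ are pairwise distinct and $\{\phi(xy),\phi(xz),\phi(yz)\}\in E(\ccA^{ijk})$. The $F$-freeness of $H$ is then deterministic: an embedding $\iota\st V(F)\to V$ of $F$ into $H$ would, together with $\lambda(v)=i$ (for $\iota(v)\in V_i$) and $\phi_F(uv)=\phi(\iota(u)\iota(v))$, form a reduced map $F\to\ccA$ in the sense of Definition~\ref{dfn:rm}, contradicting the $F$-freeness of $\ccA$.

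What remains is to show that with positive probability $H$ is $(d,\eta,\star)$-dense. For $\star=\vvv$, fix subsets $A,B,C\subseteq V$, write $A_i=A\cap V_i$ etc., and decompose $|E_\vvv(A,B,C)|$ as a sum over distinct index triples $(i,j,k)$ of the counts $|E_\vvv(A_i,B_j,C_k)|$. Each summand has expectation at least $d|A_i||B_j||C_k|$ because $(d,\vvv)$-density of $\ccA$ forces $\ccA^{ijk}$ to have edge density at least $d$; the ``diagonal'' terms missing from the sum contribute at most $O(N^3/m)$, which is below $\tfrac12\eta N^3$ once $m$ is large. Azuma's inequality, applied to the $\binom{N}{2}$ independent colour choices (each affecting $O(N)$ triples), bounds the deviation probability for fixed $(A,B,C)$ by $\exp(-\Omega(\eta^2N^2))$, and a union bound over the $2^{3N}$ choices of $(A,B,C)$ closes this case.

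The main obstacle appears in the cases $\star\in\{\ev,\ee\}$, where a direct union bound over arbitrary $P,Q\subseteq V^2$ would cost a factor of $2^{N^2}$ and wipe out the Azuma deviation estimate. One routes around it by first establishing, a.a.s., a bipartite quasirandomness property of $\phi$: for every $i\ne j$, every $P^{ij}\in\cP^{ij}$, and all $S\subseteq V_i$, $T\subseteq V_j$,
\[
	\bigl|\,|\{(x,y)\in S\times T\st \phi(xy)=P^{ij}\}|-|S||T|/|\cP^{ij}|\,\bigr|=o(N^2),
\]
together with an analogous three-term refinement controlling joint counts of $\phi(ab)=P^{ij}$ and $\phi(ac)=P^{ik}$. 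These bipartite statements involve union bounds only over $2^{O(N/m)}$ choices of the subsets and are comfortably beaten by Chernoff estimates. Granted them, one expands $|E_\ev(A,P)|$ and $|E_\ee(P,Q)|$ as sums over admissible colour patterns $(P^{ij},P^{ik},P^{jk})\in E(\ccA^{ijk})$ inside each $(i,j,k)$-block, and the $(d,\ev)$- respectively $(d,\ee)$-density of $\ccA$ produces the desired lower bounds up to an $o(N^3)$ error. Letting $m$ and then $N$ tend to infinity yields the required hypergraphs for every $d<\pird_\star(F)$, and hence $\pird_\star(F)\le\pi_\star(F)$.
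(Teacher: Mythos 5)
Your proposal follows essentially the same route as the paper: build $H$ by a random blow-up of a $(d,\star)$-dense $F$-free reduced hypergraph, observe that any embedding of $F$ into $H$ projects to a reduced map (so $H$ is $F$-free), and control the density by establishing quasirandomness of the colour classes $\{xy\colon\phi(xy)=P^{ij}\}$ rather than attempting a union bound over arbitrary $P, Q\subseteq V^2$. Two small remarks on the density calculation. The paper uses this bipartite quasirandomness, together with the triangle counting lemma, in the $\vvv$ case as well, which sidesteps the Azuma-plus-$2^{3N}$-union-bound computation and treats all three cases uniformly. And the ``three-term refinement'' you invoke is not actually needed: the paper shows that the plain pair-level quasirandomness of the bipartite graph $G(P^{ik})$, applied after fixing one vertex $y\in V_j$ and then summing over $y\in V_j$, already controls the joint counts of $\phi(xy)=P^{ij}$ and $\phi(xz)=P^{ik}$ required in the $\ev$ and $\ee$ cases (see~\eqref{eq:33}--\eqref{eq:35}). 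These minor deviations aside, the proposal is correct and matches the paper's argument.
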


Let us recall the following standard concepts and facts required in the proof.
A bipartite graph $G=(X\dcup Y, E)$ is called {\it $(\delta, d)$-quasirandom}
for two real numbers $\delta>0$ and $d\in [0, 1]$ if for all $A\subseteq X$ and $B\subseteq Y$ 
the estimate $\big|e(A, B)-d|A||B|\big|\le \delta |X||Y|$ holds. Suppose now that for 
two nonempty disjoint sets $X$ and $Y$ we create a random bipartite graph $G$ with vertex set 
$X\dcup Y$ by declaring each pair in $K(X, Y)=\{\{x, y\}\colon x\in X \text{ and } y\in Y\}$ 
uniformly at random to be an edge of $G$ with probability $d$. Then for any fixed pair of sets 
$A\subseteq X$ and $B\subseteq Y$ Chernoff's inequality 
(see e.g.~\cite{AS}*{Theorem A.1.4}) implies
\[
	\PP\bigl(\big|e(A, B)-d|A||B|\big| > \delta |X||Y|\bigr) \le 2\exp(-2\delta^2|X||Y|), 
\]
whence 
\[
	\PP(\text{$G$ fails to be $(\delta, d)$-quasirandom})
	\le 
	2^{|X|+|Y|+1}\exp(-2\delta^2|X||Y|)\,.
\]
In particular, if $|X|=|Y|$ tends to infinity, then $G$ is asymptotically almost surely 
$(\delta, d)$-quasirandom. 

An important result about quasirandomness, utilised below,
is the so-called {\it triangle counting lemma}. It informs us that if a tripartite graph
$P=(X\dcup Y\dcup Z, E)$ has the property that its naturally induced bipartite subgraphs 
on $X\dcup Y$, $X\dcup Z$, and $Y\dcup Z$ are 
$(\delta, d_{XY})$-, $(\delta, d_{XZ})$-, and $(\delta, d_{YZ})$-quasirandom, respectively,
then the size of the set $\cK_3(P)=\{\{x, y, z\}\colon xy, xz, yz\in E\}$ of triangles it contains obeys the estimate
\[
	\big||\cK_3(P)|-d_{XY}d_{XZ}d_{YZ}|X||Y||Z|\big|\le 3\delta|X||Y||Z|\,.
\]

\begin{proof}[Proof of Proposition~\ref{prop:1}]
	Let a real number $d\in [0, 1]$ be given which has the property that for every 
	$m\in \NN$ there exists a $(d, \star)$-dense, $F$-free reduced hypergraph with 
	$m$ indices. We need to show that $d \le \pi_\star(F)$. So consider an 
	arbitrary real $\eta>0$ as well as some $n\in \NN$. Now we need to produce 
	a $(d, \eta, \star)$-dense, $F$-free hypergraph $H=(V, E)$ with $|V|\ge n$. 
	For this purpose, we set 
	\begin{equation}\label{eq:m}
		m=\left\lceil\frac 6\eta\right\rceil
	\end{equation}
	and appeal to our hypothesis on $d$. It yields an $F$-free, $(d, \star)$-dense 
	reduced hypergraph $\ccA$, 
	say with index set $I$, vertex classes $\cP^{ij}$, and constituents $\ccA^{ijk}$,
	where $|I| = m$. Now set 
	\[
		P=\max\bigl\{|\cP^{ij}|\colon ij\in I^{(2)}\bigr\}, 
		\quad 
		\delta=\frac{\eta}{6P^3}
	\]
	and let $h\gg P, m, n, \eta^{-1}$ be sufficiently large. 
	
	We shall construct the desired hypergraph $H$ on a set of vertices 
	$V=\bigdcup_{i\in I}V_i$ with $|V_i|=h$ for every $i\in I$. Owing to the probabilistic
	argument discussed before this proof we may assume that there is a family 
	$\{\phi_{ij}\colon ij\in I^{(2)}\}$ of colourings 
	$\phi_{ij}\colon K(V_i, V_j)\longrightarrow \cP^{ij}$ such that for every 
	pair of indices $ij\in I^{(2)}$ and every $P^{ij}\in \cP^{ij}$ the bipartite graph 
	$G(P^{ij})$ between $V_i$ and $V_j$ whose set of edges is $\phi^{-1}_{ij}(P^{ij})$
	happens to be $(\delta, |\cP^{ij}|^{-1})$-quasirandom.
	Depending on such colourings $\phi_{ij}$
	we complete the definition of $H$ in the expected way by setting 
	\begin{align*}
		E(H)=\bigl\{xyz\in V^{(3)}\colon \text{There are distinct } & i,j,k\in I  
			\text{ with $x\in V_i$, $y\in V_j$, $z\in V_k$,} \\
			& \text{and } \{\phi_{ij}(xy), \phi_{ik}(xz), \phi_{jk}(yz)\}\in E(\ccA^{ijk})
			\bigr\}\,.
	\end{align*}

	Let us remark that all edges of $H$ are {\it crossing} in the sense of intersecting 
	each of the vertex classes $V_i$ with $i\in I$ at most once. The rationale behind our
	choice of $m$ in~\eqref{eq:m} is that it allows us to bound the number of non-crossing
	triples $(x, y, z)\in V^3$ in a useful way. Clearly, this number is $h^3$ times the number 
	of triples $(i, j, k)\in I^3$ for which $i=j$, $i=k$, or $j=k$ holds. As this number 
	is in turn at most $3m^2$, we conclude that the number of non-crossing ordered triples 
	is at most $3m^2h^3 = 3m^{-1}|V|^3$, which by~\eqref{eq:m} is at most $\frac\eta 2|V|^3$.

	Now our choice of $h$ clearly guarantees $|V(H)|= hm\ge n$. 
	Next we would like to check that~$H$ is indeed $F$-free. 
	Otherwise there would exist an embedding $\psi\colon F\longrightarrow H$. For each 
	$u\in V(F)$ let $\lambda(u)\in I$ denote the index for which $\psi(u)\in V_{\lambda(u)}$
	is true. For every pair $uv\in\partial F$ we know that $\lambda(u)\ne \lambda(v)$, 
	because the edges of $H$ are crossing.
	Thus we may define $\phi\colon\partial F\to V(\ccA)$ by 
	\[
		\phi(uv)=\phi_{\lambda(u)\lambda(v)}\bigl(\psi(u)\psi(v)\bigr)
	\]
	for every 
	pair $uv\in \partial F$. Evidently~$\lambda$ and~$\phi$ satisfy the first two clauses 
	of Definition~\ref{dfn:rm}. As~$\psi$ maps edges of~$F$ to edges of~$H$, they 
	satisfy~\ref{it:rm3} as well. Thus $(\lambda, \phi)$ is a reduced map from $F$ to $\ccA$, 
	contrary to the choice of $\ccA$ as being $F$-free.
	
	It remains to check that $H$ is $(d, \eta, \star)$-dense and for this purpose 
	we consider the three possibilities for $\star$ separately. 
	
	\medskip
	
	{\hskip2em \it First Case. $\star=\vvv$}
	
	\smallskip
	
	Given arbitrary $A, B, C\subseteq V$ we need to prove that 
	$|E_{\vvv}(A, B, C)|\ge d |A||B||C|-\eta |V|^3$. 
	Whenever $i, j, k\in I$ are distinct and $\{P^{ij}, P^{ik}, P^{jk}\}\in E(\ccA^{ijk})$,
	the triangle counting lemma entails that the tripartite subgraph of 
	$G(P^{ij})\cup G(P^{ik})\cup G(P^{jk})$ induced by 
	$A\cap V_i$, $B\cap V_j$, and $C\cap V_k$ contains at least 
	\[
		\frac{|A\cap V_i||B\cap V_j||C\cap V_k|}{|\cP^{ij}||\cP^{ik}||\cP^{jk}|}
		-3\delta h^3
	\]
	triangles each of which gives rise to an edge of $H$. 
	Thus for distinct $i, j, k\in I$ we have 
	\[
		E_{\vvv}(A\cap V_i, B\cap V_j, C\cap V_k)
		\ge 
		\frac{|E(\ccA^{ijk})||A\cap V_i||B\cap V_j||C\cap V_k|}{|\cP^{ij}||\cP^{ik}||\cP^{jk}|}
		 		-3P^3\delta h^3\,,
	\]
	which by our assumption that $\ccA$ be $(d, \vvv)$-dense and by our choice of $\delta$
	yields
	\[
		E_{\vvv}(A\cap V_i, B\cap V_j, C\cap V_k)
		\ge 
		d|A\cap V_i||B\cap V_j||C\cap V_k|-\tfrac\eta 2 h^3\,.
	\]

	Summing over all ordered triples $(i, j, k)$ of distinct indices we infer that, up to 
	an additive error of at most $\frac\eta 2|V|^3$, the size of $E_{\vvv}(A, B, C)$ is at 
	least $d$ times the number of crossing triples in $A\times B\times C$. As there at most 
	$\frac\eta 2|V|^3$ non-crossing triples altogether, it follows that we have indeed 
	$|E_{\vvv}(A, B, C)|\ge d |A||B||C|-\eta |V|^3$.
	
	\goodbreak
	\medskip
	
	{\hskip2em \it Second Case. $\star=\ev$}
	
	\smallskip

	Given $A\subseteq V$ and $Q\subseteq V^2$ we need to prove that
	$|E_{\ev}(A, Q)|\ge d|A||Q| -\eta |V|^3$. Getting rid of non-crossing triples
	as in the previous case, it suffices to this end if we show for any three distinct indices 
	$i, j, k\in I$ that 
	\[
		  |E_{\ev}(A\cap V_i, Q\cap (V_j\times V_k)|
		  \ge d|A\cap V_i||Q\cap (V_j\times V_k)| - \tfrac\eta2 h^3\,.
	\]
	For this in turn it is enough to establish that for every $P^{jk}\in \cP^{jk}$
	the sets 
	\[
		\widetilde{K}(P^{jk})=\bigl\{(x, y, z)\in V_i\times V_j\times V_k\colon 
			x\in A, (y, z)\in Q, \text{ and } \phi_{jk}(yz)= P^{jk}\bigr\}
	\]
	and 
	\[
		\widetilde{E}(P^{jk})=\bigl\{(x, y, z)\in\widetilde{K}(P^{jk})\colon 
			xyz\in E(H)\bigr\}	 
	\]
	satisfy 
	\[
		\big|\widetilde{E}(P^{jk})\big| \ge d\, \big|\widetilde{K}(P^{jk})\big| 
			- \tfrac{\eta}{2P}h^3\,.
	\]

	Now we distinguish the triples $(x, y, z)$ contributing to the left side according 
	to the values of $\phi_{ij}(xy)$ and $\phi_{ik}(xz)$. By the assumed $(d, \ev)$-denseness
	of $\ccA$ we know 
	\[
		\big|\bigl\{(P^{ik}, P^{jk})\in \cP^{ik}\times \cP^{jk}\colon
			(P^{ij}, P^{ik}, P^{jk})\in E(\ccA^{ijk})\bigr\}\big|
		\ge 
		d |\cP^{ik}| |\cP^{jk}|\,,
	\]
	and thus it remains to show that for every edge $\{P^{ij}, P^{ik}, P^{jk}\}$ of 
	$\ccA^{ijk}$ we have 
	\begin{equation}\label{eq:33}
		\big|\bigl\{(x, y, z)\in \widetilde{K}(P^{jk})\colon 
		\phi_{ij}(xy)=P^{ij} \text{ and } \phi_{ik}(xz)=P^{ik} \bigr\}\big|
		\ge 
		\frac{\big|\widetilde{K}(P^{jk})\big|}{|\cP^{ij}||\cP^{ik}|}-3\delta h^3\,.
	\end{equation}

	Now appealing for $y\in V_j$ to the $(\delta, |\cP^{ik}|^{-1})$-quasirandomness 
	of $G(P^{ik})$ we learn that the sets 
	\[
		A_y=\bigl\{x\in A\cap V_i\colon \phi_{ij}(xy)=P^{ij}\bigr\}
	\]
	and 
	\[
		C_y=\bigl\{z\in V_k\colon (y, z)\in Q \text{ and } \phi_{jk}(yz)=P^{jk}\bigr\}
	\]
	satisfy
	\[
		\big|\bigl\{(x, z)\in A_y\times C_y\colon \phi_{ik}(xz)=P^{ik}\bigr\}\big|
		\ge 
		\frac{|A_y||C_y|}{|\cP^{ik}|}-\delta h^2\,.
	\]
	Summing over all $y\in Y$ we deduce 
	\begin{align}\label{eq:34}
		\big|\bigl\{(x, y, z)\in \widetilde{K}(P^{jk}) &\colon 
		\phi_{ij}(xy)=P^{ij} \text{ and } \phi_{ik}(xz)=P^{ik} \bigr\}\big| \notag \\
		& \ge 
		|\cP^{ik}|^{-1}
		\big|\bigl\{(x, y, z)\in \widetilde{K}(P^{jk})\colon 
		\phi_{ij}(xy)=P^{ij}\bigr\}\big| -\delta h^3\,.
	\end{align}

	Thus~\eqref{eq:33} will follow if can prove additionally that
	\[
		\big|\bigl\{(x, y, z)\in \widetilde{K}(P^{jk})\colon 
		\phi_{ij}(xy)=P^{ij}\bigr\}\big| 
		\ge 
		|\cP^{ij}|^{-1}\big|\widetilde{K}(P^{jk})\big|-\delta h^3\,.
	\]
	This estimate can be verified, however, in the same way as~\eqref{eq:34},
	the only difference being that this time one works with a sum over all $z\in V_k$
	and exploits the $(\delta, |\cP^{ij}|^{-1})$-quasirandomness of $G(P^{ij})$.	 
	
	\medskip
	
	{\hskip2em \it Third Case. $\star=\ee$}
	
	\smallskip
 	
	Proceeding almost exactly as in the previous case we consider two given sets of pairs
	$Q, R\in V^2$ and aiming at $|E_{\ee}(Q, R)|\ge d|\cK_{\ee}(Q, R)|-\eta |V|^3$ we begin 
	again by eliminating the noncrossing triples from our consideration, this time by reducing 
	our claim to the statement that for any three distinct indices $i, j, k\in I$ the 
	inequality 
	\[
		\big|E_{\ee}\bigl(Q\cap (V_i\times V_j), R\cap (V_j\times V_k)\bigr)\big|
		\ge 
		d\big|\cK_{\ee}\bigl(Q\cap (V_i\times V_j) , R\cap (V_j\times V_k)\bigr)\big|
			-\tfrac\eta 2h^3
	\]
	holds. This will be clear once we know that for all $P^{ij}\in\cP^{ij}$ and
	$P^{jk}\in\cP^{jk}$ the sets 
	\[
		\widetilde{K}(P^{ij}, P^{jk})
		=
		\bigl\{(x, y, z)\in V_i\times V_j\times V_k\cap \cK_{\ee}(Q, R) \colon
			\phi_{ij}(xy)=P^{ij} \text{ and } \phi_{jk}(yz)=P^{jk} \bigr\}
	\]
	and 
	\[
		\widetilde{E}(P^{ij}, P^{jk})
		=
		\bigl\{(x, y, z)\in \widetilde{K}(P^{ij}, P^{jk}) \colon
			xyz\in E(H) \bigr\}
	\]
	satisfy 
	\[
		\big|\widetilde{E}(P^{ij}, P^{jk})\big|
		\ge 
		d \big|\widetilde{K}(P^{ij}, P^{jk})\big|-3\delta Ph^3\,.
	\]
	As $\ccA$ is $(d, \ee)$-dense, we have 
	\[
		\big|\bigl\{P^{ik}\in  \cP^{ik}\colon
			(P^{ij}, P^{ik}, P^{jk})\in E(\ccA^{ijk})\bigr\}\big|
			\ge d |\cP^{ik}|\,,
	\]
	and, hence, it is enough to check that to every edge $\{P^{ij}, P^{ik}, P^{jk}\}$
	of the constituent $\ccA^{ijk}$ there corresponds an inequality 
	\begin{equation}\label{eq:35}
		\big|\bigl\{(x, y, z)\in \widetilde{K}(P^{ij}, P^{jk})\colon
			\phi_{ik}(xz)=P^{ik}\bigr\}\big|
		\ge \frac{\big|\widetilde{K}(P^{ij}, P^{jk})\big|}{|\cP^{ik}|}-\delta h^3\,.
	\end{equation}

	Now indeed for every $y\in V_j$ the $(\delta, |\cP^{ik}|^{-1})$-quasirandomness
	of $G(P^{ik})$ tells us that for the sets 
	\[
		A_y=\bigl\{x\in V_i\colon (x, y)\in Q \text{ and } \phi_{ij}(xy)=P^{ij}\bigr\}
	\]
	and 
	\[
		C_y=\bigl\{z\in V_k\colon (y, z)\in R \text{ and } \phi_{jk}(yz)=P^{jk}\bigr\}
	\]
	one has 
	\[
		\big|\bigl\{(x, z)\in A_y\times C_y\colon \phi_{ik}(xz)=P^{ik}\bigr\}\big|
		\ge
		\frac{|A_y||C_y|}{|\cP^{ik}|} - \delta h^2\,.
	\]
	By summing this over all $y\in Y$ one arrives at~\eqref{eq:35}.
\end{proof}

\section{Irregular triads}

The definition of $\pird_\star(F)$ assures us that for $\eps>0$ every 
$(\pird_\star(F)+\eps, \star)$-dense reduced hypergraph $\ccA$ with sufficiently many 
indices contains a reduced image of $F$. For our intended application of this fact, 
however, we need to know that it remains true if one allows the deletion of a small number
of edges from $\ccA$ (see Proposition~\ref{prop:irregular} below). 

For $\star=\vvv$ this turns out to be somewhat easier to prove 
than in the other two cases. The additional argument we want to
put forth if $\star=\ev$ is the following.   

Suppose that a reduced hypergraph $\ccA$ with index set $I$, 
vertex classes $\cP^{ij}$, and constituents $\ccA^{ijk}$ as well as two real numbers
$d>0$ and $\eta\ge 0$ are given. We shall say that~$\ccA$ is {\it $(d, \eta, \ev)$-dense} 
if for any three distinct indices $i, j, k\in I$ the exceptional set 
$\cX^{ij}_k\subseteq\cP^{ij}$ consisting of all $P^{ij}\in \cP^{ij}$ with 
\[
	\big|\bigl\{(P^{ik}, P^{jk})\in \cP^{ik}\times \cP^{jk}\colon
			(P^{ij}, P^{ik}, P^{jk})\in E(\ccA^{ijk})\bigr\}\big|
	< d |\cP^{ik}| |\cP^{jk}|
\]
satisfies $|\cX^{ij}_k|\le \eta |\cP^{ij}|$. So for reduced hypergraphs 
being $(d, 0, \ev)$-dense means the same as being $(d, \ev)$-dense. 

\begin{lemma}\label{lem:41}
	For every hypergraph $F$ and every $\eps >0$ there exist $m\in \NN$ and $\eta >0$ 
	such that every $(\pird_{\ev}(F)+\eps, \eta, \ev)$-dense reduced hypergraph 
	with $m$ indices contains a reduced image of $F$.
\end{lemma}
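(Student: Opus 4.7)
The plan is to exhibit $m$ and $\eta$ via two reductions. Set $d=\pird_\ev(F)$. First, since $d+\eps/2>d$, the supremum defining $\pird_\ev(F)$ provides an integer $m_0$ for which no $(d+\eps/2,\ev)$-dense, $F$-free reduced hypergraph with index set of size $m_0$ exists. Moreover, restricting any reduced hypergraph to a sub-family of its indices preserves both $(d+\eps/2,\ev)$-denseness (the condition being checked per triple of indices) and $F$-freeness (any reduced map into the restriction is a reduced map into the original), so the same non-existence statement extends to every $m\ge m_0$. I set $m=m_0$ and $\eta=\eps/(4m)$.

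Next, I would clean up an arbitrary $(d+\eps,\eta,\ev)$-dense reduced hypergraph $\ccA$ on $m$ indices, with vertex classes $\cP^{ij}$, constituents $\ccA^{ijk}$, and exceptional sets $\cX^{ij}_k$, by removing its exceptional vertices. For each pair $ij$ put $\cX^{ij}=\bigcup_{k\in I\sm\{i,j\}}\cX^{ij}_k$, so that $|\cX^{ij}|\le(m-2)\eta|\cP^{ij}|$, and let $\ccA'$ be the reduced hypergraph on the same index set $I$ with vertex classes $\cP^{ij}_*=\cP^{ij}\sm\cX^{ij}$ and induced constituents. For the key density check: fix distinct $i,j,k\in I$ and $P^{ij}\in\cP^{ij}_*$; since $P^{ij}\notin\cX^{ij}_k$, it has at least $(d+\eps)|\cP^{ik}||\cP^{jk}|$ edge-partners in $\cP^{ik}\times\cP^{jk}$ in $\ccA^{ijk}$, and discarding those with a coordinate in $\cX^{ik}\cup\cX^{jk}$ removes at most $2(m-2)\eta|\cP^{ik}||\cP^{jk}|$ of them. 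The choice $\eta=\eps/(4m)$ ensures $d+\eps-2(m-2)\eta\ge d+\eps/2$, whence at least $(d+\eps/2)|\cP^{ik}||\cP^{jk}|\ge(d+\eps/2)|\cP^{ik}_*||\cP^{jk}_*|$ good partners survive in $\cP^{ik}_*\times\cP^{jk}_*$, so $\ccA'$ is indeed $(d+\eps/2,\ev)$-dense.

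Since $\ccA'$ has $m\ge m_0$ indices, the first step forces $\ccA'$ to contain a reduced image of $F$; because the vertices and edges of $\ccA'$ are inherited from $\ccA$, this is automatically a reduced image of $F$ in $\ccA$ as well. The main obstacle is largely bookkeeping: one must interpret the supremum defining $\pird_\ev(F)$ correctly, use its monotonicity under restriction of indices to turn the threshold $m_0$ into an ``for all $m\ge m_0$'' statement, and calibrate $\eta$ against $m$ so that cleaning the exceptional vertices costs no more than $\eps/2$ in density. No substantial new combinatorial idea beyond this cleaning argument appears to be required.
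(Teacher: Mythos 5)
Your proof is correct and follows essentially the same route as the paper's: both define $m$ so that every $\bigl(\pird_{\ev}(F)+\tfrac{\eps}{2},\ev\bigr)$-dense reduced hypergraph on $m$ indices contains a reduced image of $F$, set $\eta=\eps/(4m)$, remove all vertices that are exceptional for some third index, and verify that the surviving induced reduced hypergraph is still $\bigl(\pird_{\ev}(F)+\tfrac{\eps}{2},\ev\bigr)$-dense. Your slightly sharper constant $(m-2)\eta$ in place of the paper's $m\eta$ is immaterial, and the only small omission is the sanity check that the pruned vertex classes remain nonempty, which follows immediately from $(m-2)\eta<\tfrac{\eps}{4}<1$.
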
  

\begin{proof}
	Choose $m$ in such a way that every $\bigl(\pird_{\ev}(F)+\frac\eps 2, \ev\bigr)$-dense 
	reduced hypergraph with~$m$ indices contains a reduced image of $F$, set   
	\[
		\eta=\frac{\eps}{4m}\,,
	\]
	and consider an arbitrary $(\pird_{\ev}(F)+\eps, \eta, \ev)$-dense reduced 
	hypergraph $\ccA$ with~$m$ indices. 
	As usual we denote the index set, vertex classes, and constituents of $\ccA$ by 
	$I$, $\cP^{ij}$, and $\ccA^{ijk}$ respectively. Let the exceptional sets $\cX^{ij}_k$
	be defined as above with $\pird_{\ev}(F)+\eps$ here in place of $d$ there. 
	
	Now the plan is to show that if one deletes all exceptional vertices from~$\ccA$
	one gets a $\bigl(\pird_{\ev}(F)+\frac\eps 2, \ev\bigr)$-dense reduced hypergraph, 
	which, therefore, contains a reduced image of $F$. Thus we define 
	\[
		\cQ^{ij}=\cP^{ij} \sm \bigcup_{k\in I\sm\{i, j\}}\cX^{ij}_k
		\quad \text{ for every pair } ij\in I^{(2)}\,,
	\]
	and notice that our assumption on~$\ccA$ implies 
	\begin{equation}\label{eq:Qij-large}
		|\cQ^{ij}|\ge (1-m\eta)\,,
	\end{equation}
	whence, in particular, $\cQ^{ij}\neq\varnothing$. 
	
	For this reason there exists a reduced hypergraph $\ccB$ with index set $I$
	and vertex classes~$\cQ^{ij}$ whose constituents $\ccB^{ijk}$ are the restrictions 
	of $\ccA^{ijk}$ to $\cQ^{ij}\dcup\cQ^{ik}\dcup\cQ^{jk}$.
	
	Consider any three distinct indices $i, j, k\in I$ as well as an 
	arbitrary vertex~${P^{ij}\in \cQ^{ij}}$. 
	From $P^{ij}\not\in\cX^{ij}_k$ we conclude 
	\begin{equation}\label{eq:41}
		\big|\bigl\{(P^{ik}, P^{jk})\in \cP^{ik}\times \cP^{jk}\colon
			(P^{ij}, P^{ik}, P^{jk})\in E(\ccA^{ijk})\bigr\}\big|
		\ge (\pird_{\ev}(F)+\eps) |\cP^{ik}| |\cP^{jk}|\,.%\ge d' |\cQ^{ik}| |\cQ^{jk}|\,.
	\end{equation}

	By~\eqref{eq:Qij-large} we have ${|\cQ^{ik}|\ge (1-m\eta)|\cP^{ik}|}$
	and $|\cQ^{jk}|\ge (1-m\eta)|\cP^{jk}|$, wherefore
	\[
		|(\cP^{ik}\times \cP^{jk})\sm (\cQ^{ik}\times \cQ^{jk})|
		\le 
		2m\eta |\cP^{ik}||\cP^{jk}|
		=
		\tfrac\eps 2 |\cP^{ik}||\cP^{jk}|\,.
	\]
	Combined with~\eqref{eq:41} this yields 
	\begin{align*}
		\big|\bigl\{(P^{ik}, P^{jk})\in \cQ^{ik}\times \cQ^{jk}\colon
			(P^{ij}, P^{ik}, P^{jk})\in E(\ccA^{ijk})\bigr\}\big|
		&\ge 
		 \bigl(\pird_{\ev}(F)+\eps-\tfrac\eps 2\bigr) |\cP^{ik}| |\cP^{jk}| \\
		 &\ge 
		 \bigl(\pird_{\ev}(F)+\tfrac\eps 2\bigr) |\cQ^{ik}| |\cQ^{jk}| 
		 \,,
	\end{align*}
	as desired.
\end{proof}

Similar considerations can be undertaken with respect to $\ee$. As expected, 
a reduced hypergraph $\ccA$ with standard notation is called 
{\it $(d, \eta, \ee)$-dense} for two real numbers $d\in[0, 1]$ and $\eta>0$ 
provided that for any three distinct indices $i, j, k\in I$ the set $\cY^{jk}_i$
of all exceptional pairs $(P^{ij}, P^{ik})\in \cP^{ij}\times \cP^{ik}$ with 			%
\[
	\big|\bigl\{P^{jk}\in  \cP^{jk}\colon
	(P^{ij}, P^{ik}, P^{jk})\in E(\ccA^{ijk})\bigr\}\big|
	< d |\cP^{jk}|\,,
\]
satisfies $|\cY^{jk}_i|\le \eta |\cP^{ij}||\cP^{ik}|$. 

%As in the case of $\ev$ this 
%concept becomes the more restrictive that smaller one makes $\eta$. Next one defines
%%
%\begin{align} \label{eq:pirdd}
%	\pirdd_{\ee}(F)  = \sup\bigl\{d\in [0, 1]\colon \text{For} &\text{ every $m\in\NN$ 
%		and $\eta>0$ there is a $(d, \eta, \ee)$-dense,} \notag \\ 
%		& \text{$F$-free, reduced hypergraph with at least $m$ indices}\bigr\} 
%\end{align}
%%
%for every hypergraph $F$.
			
\begin{lemma}\label{lem:42}
	Given $\eps>0$ and a hypergraph $F$, there are $m\in \NN$ and $\eta>0$ such that 
	every $(\pird_{\ee}(F)+\eps, \eta, \ee)$-dense reduced hypergraph with $m$ 
	indices contains a reduced image of $F$.
\end{lemma}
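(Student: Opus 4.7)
The plan follows the strategy of Lemma~\ref{lem:41}: one uses the definition of $\pird_{\ee}(F)$ to fix $m$ so that every $(\pird_{\ee}(F)+\eps/2,\ee)$-dense reduced hypergraph with $m$ indices contains a reduced image of $F$, and then aims to pass from the given $(\pird_{\ee}(F)+\eps,\eta,\ee)$-dense reduced hypergraph $\ccA$ to a fully $(\pird_{\ee}(F)+\eps/2,\ee)$-dense sub-reduced-hypergraph $\ccB$ on the same index set. The direct analogue of the vertex-deletion argument used there does not suffice here: the exceptional structures $\cY^{jk}_{i}$ are now \emph{pairs} of vertices, and erasing them by removing vertices amounts to covering a bipartite graph whose vertex cover is not controlled by $\eta$ alone---a near-matching with $\eta|\cP^{ij}||\cP^{ik}|$ edges would require the removal of essentially as many vertices as the graph has edges, not a small fraction of the class sizes.

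To bypass this obstacle, the plan is to construct $\ccB$ probabilistically. Pick an integer $K$ with $C\eps^{-2}\log m\le K\le (2\,m^{3/2}\sqrt{\eta})^{-1}$, where $C$ is an absolute constant arising from the Chernoff bounds below; these two constraints are compatible once $\eta\le\eps^{4}/(C'\,m^{3}\log^{2}m)$ for a suitable $C'$. For each pair of indices $ij$, include each $P^{ij}\in\cP^{ij}$ independently in a random subset $\cQ^{ij}\subseteq\cP^{ij}$ with probability $q_{ij}=\min\{1,K/|\cP^{ij}|\}$, so that $\EE|\cQ^{ij}|=\min\{|\cP^{ij}|,K\}$. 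Two observations drive the analysis. First, the expected total number of exceptional pairs surviving in $\bigcup_{ijk}\cY^{jk}_{i}\cap(\cQ^{ij}\times\cQ^{ik})$ is at most $\sum_{ijk}q_{ij}q_{ik}|\cY^{jk}_{i}|\le O(m^{3}K^{2}\eta)\le\tfrac14$, by the upper bound on $K$. Second, for every non-exceptional pair $(P^{ij},P^{ik})$---whose $\cP^{jk}$-degree in $\ccA^{ijk}$ is at least $(\pird_{\ee}(F)+\eps)|\cP^{jk}|$---Chernoff's inequality, applied with $q_{jk}|\cP^{jk}|\ge\min\{|\cP^{jk}|,K\}$, yields that the $\cQ^{jk}$-degree of the pair exceeds $(\pird_{\ee}(F)+\eps/2)|\cQ^{jk}|$ apart from an event of probability at most $\exp(-c\eps^{2}K)$.

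Taking a union bound over the $O(m^{3}K^{2})$ triple-pair configurations, together with standard tail bounds on the sizes $|\cQ^{ij}|$, one obtains that with positive probability the sampled subsets simultaneously (a)~contain no surviving exceptional pair, (b)~give every pair of surviving vertices a $\cQ^{jk}$-degree of at least $(\pird_{\ee}(F)+\eps/2)|\cQ^{jk}|$, and (c)~satisfy $|\cQ^{ij}|\ge\tfrac12\min\{|\cP^{ij}|,K\}$ for every $ij$. Fixing such a realisation, the induced sub-reduced-hypergraph $\ccB$ is $(\pird_{\ee}(F)+\eps/2,\ee)$-dense on $m$ indices and hence contains a reduced image of $F$ by the choice of $m$; this image is also present in $\ccA$. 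The hard part is calibrating the parameter $K$ to meet the two opposing constraints---the upper bound, which makes exceptional pairs vanish in expectation, and the lower bound, which provides Chernoff concentration for the density condition on every surviving pair---and it is this compatibility requirement that forces $\eta$ to be chosen as small as claimed.
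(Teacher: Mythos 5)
Your opening diagnosis is exactly right: the vertex-deletion argument of Lemma~\ref{lem:41} fails for $\ee$ because the exceptional set $\cY^{jk}_i$ is a bipartite graph of pairs, not a small set of vertices, and deleting its endpoints is not controlled by $\eta$. You then repair this probabilistically, which is the correct move and is also what the paper does; but your probabilistic model differs in a way that is worth comparing. You sample Bernoulli subsets $\cQ^{ij}\subseteq\cP^{ij}$ of target size roughly $K$, so you must fight three fires at once: the vertex classes could become empty or badly sized (your condition (c)), the degree condition for $\ccB$ has a \emph{random} denominator $|\cQ^{jk}|$ that needs its own concentration, and the union bound for (b) is nominally over $|\cP^{ij}||\cP^{ik}|$ pairs and only becomes $O(K^2)$ after you observe that the survival probability $q_{ij}q_{ik}$ tames the sum. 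All of this is doable and your calibration of $K$ and $\eta$ comes out to the right orders, but it takes real bookkeeping. The paper instead keeps the target vertex classes $\cQ^{ij}$ as fixed abstract sets of size $\ell$ and draws uniformly random maps $\psi^{ij}\colon\cQ^{ij}\to\cP^{ij}$, pulling $\ccA$ back along $\psi$ to define $\ccB$. This is sampling with replacement rather than without: the class sizes are deterministic, $|\cQ^{jk}|=\ell$ always, the union bound is genuinely over $m^3\ell^2$ configurations, and the only probabilistic estimate needed is $\PP(\ccE)\le\eta+\exp(-\eps^2\ell/2)$ per configuration, after which $\ell$ large and then $\eta$ small finish the proof. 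Your route is correct but buys nothing over the paper's and costs several auxiliary estimates; the random-map construction is the cleaner instantiation of the same idea, and the paper notes it also generalises to establish Lemma~\ref{lem:41} and the $\pi_{\ccA}$ setting of~\cite{RRS-e}.
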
  

\begin{proof}
	Take $m$ so large that every $\bigl(\pird_{\ee}(F)+\tfrac\eps2, \ee\bigr)$-dense
	reduced hypergraph with $m$ indices contains a reduced image of $F$. Take $\ell\in\NN$
	and $\eta>0$ fitting into the hierarchy 
	\[
		\eta \ll \ell^{-1} \ll m^{-1}, \eps\,,
	\]
	and let $\ccA$ be a $(\pird_{\ee}(F)+\eps, \eta, \ee)$-dense reduced hypergraph
	with index set $I$ of size $m$, vertex classes~$\cP^{ij}$,  
	constituents $\ccA^{ijk}$, and exceptional sets $\cY^{jk}_i$ (defined with 
	$\pird_{\ee}(F)+\eps$ in place of $d$). 
	We want to prove that there is a reduced map from $F$ to $\ccA$. 
	
	To this end  we consider a random reduced hypergraph $\ccB$ with index set $I$ whose 
	vertex sets~$\cQ^{ij}$ are any $\binom{|I|}2$ disjoint sets of size $\ell$. 
	The intended randomness is induced by a family $\psi=\{\psi^{ij}\colon ij\in I^{(2)}\}$
	of maps $\psi^{ij}\colon \cQ^{ij}\longrightarrow \cP^{ij}$. Depending on $\psi$ 
	the constituents of $\ccB$ are defined so as to satisfy
	\[
		\{Q^{ij}, Q^{ik}, Q^{jk}\}\in E(\ccB^{ijk})
		\,\,\, \Longleftrightarrow \,\,\, 
 		\{\psi^{ij}(Q^{ij}), \psi^{ik}(Q^{ik}), \psi^{jk}(Q^{jk})\}\in E(\ccA^{ijk})
	\]
	for all $ijk\in I^{(3)}$ and all $Q^{ij}\in \cQ^{ij}$, $Q^{ik}\in\cQ^{ik}$, 
	and $Q^{jk}\in\cQ^{jk}$.	
	
	Let us observe first that if for some choice of $\psi$ it happens that
	$\ccB$ contains a reduced image of $F$, then we are done. This is because if 
	$(\lambda, \phi)$ is a reduced map from $F$ to $\ccB$, then $(\lambda, \psi\circ\phi)$ 
	is a reduced map from $F$ to $\ccA$,
	where by $(\psi\circ\phi)\colon \partial F\longrightarrow V(\ccA)$ we mean the map 
	defined by $(\psi\circ\phi)(uv)=\psi^{\lambda(u)\lambda(v)}\bigl(\phi(uv)\bigr)$ 
	for all $uv\in\partial F$.
	
	In the remainder of the proof we shall show that if $\psi$ gets chosen uniformly at 
	random, then with positive probability $\ccB$ 
	is $\bigl(\pird_{\ee}(F)+\tfrac\eps2, \ee\bigr)$-dense, which will conclude the 
	proof due to our choice of $m$. 
	
	So let us study for fixed distinct indices $i, j, k\in I$ and fixed vertices 
	$Q^{ij}\in\cQ^{ij}$, ${Q^{ik}\in\cQ^{jk}}$ the unpleasant event $\ccE$
	that the pair degree $D$ of $Q^{ij}$ and $Q^{ik}$ in $\ccB^{ijk}$ is smaller 
	than $\bigl(\pird_{\ee}(F)+\tfrac\eps2\bigr)\ell$.
	If also the vertices $P^{ij}=\psi^{ij}(Q^{ij})$ and $P^{ik}=\psi^{ik}(Q^{ik})$
	are given, then this pair degree $D$ depends only on $\psi^{jk}$ and not on the 
	remaining maps comprising $\psi$. Moreover, the distribution of $D$ is the same as
	if one draws $\ell$ random elements from $\cP^{jk}$ and keeps track of how many of them
	belong to the common neighbourhood of $P^{ij}$ and $P^{ik}$ in $\ccA^{ijk}$. Thus 
	if $(P^{ij}, P^{ik})\not\in \cY^{jk}_i$ the expected value of $D$ is at least 
	$(\pird_{\ee}(F)+\eps)\ell$ and Chernoff's inequality 
	(see e.g.~\cite{AS}*{Theorem A.1.4}) yields 
	\[
		\PP\bigl(
			\ccE\,
			\big| 
			\psi^{ij}(Q^{ij})=P^{ij} \text{ and } \psi^{ik}(Q^{ik})=P^{ik} 
			\bigr)
		\le
		\exp(-\eps^2\ell/2)\,. 
	\]
	Owing to $|\cY^{jk}_i|\le \eta |\cP^{ij}||\cP^{ik}|$ we infer
	\begin{align*}
		\PP(\ccE)
		&=
		|\cP^{ij}|^{-1}|\cP^{ik}|^{-1}
		\sum_{(P^{ij}, P^{ik})\in \cP^{ij}\times \cP^{ik}}
		\PP\bigl(
			\ccE\,
			\big| 
			\psi^{ij}(Q^{ij})=P^{ij}, \psi^{ik}(Q^{ik})=P^{ik} 
			\bigr) \\
		&\le \eta+\exp(-\eps^2\ell/2)\,. 
	\end{align*}
	As there are altogether no more than $m^3\ell^2$ possibilities to choose $i$, $j$, $k$, 
	$Q^{ij}$, and $Q^{ik}$, this proves 
	\[
		\PP\bigl(\ccB \text{ fails to be 
			$\bigl(\pird_{\ee}(F)+\tfrac\eps2, \ee\bigr)$-dense}\bigr)
		\le
		m^3\ell^2 \bigl(\eta+\exp(-\eps^2\ell/2)\bigr)\,,
	\]
	and by an appropriate choice of $\ell$ and $\eta$ the right side can be
	pushed below $1$.  
\end{proof}

\begin{rem}
	It should be clear that the same construction could have been used 
	for establishing Lemma~\ref{lem:41}. In fact, it generalises much further and 
	applies to the study of the Tur\'an densities $\pi_{\ccA}(F)$ initiated 
	in~\cite{RRS-e} as well. 
\end{rem}

\begin{prop}\label{prop:irregular} 
	Given a hypergraph $F$, a positive real number $\eps$, and a symbol 
	$\star\in\{\vvv, \ev, \ee\}$
	there exist $m\in \NN$ and $\delta >0$ such that the following holds. 
	If two reduced hypergraphs $\ccA$ and $\ccB$ with the same set of indices $I$
	of size at least $m$ and with the same vertex classes~$\cP^{ij}$ have the properties that 
	$\ccA$ is $(\pird_\star(F)+\eps, \star)$-dense and
	\begin{equation} \label{eq:AB}
		\sum_{ijk\in I^{(3)}} 
			\frac{|E(\ccA^{ijk})\sm E(\ccB^{ijk})|}{|\cP^{ij}||\cP^{ik}||\cP^{jk}|} 
		\le \delta |I|^3\,,
	\end{equation}
	then $\ccB$ contains a reduced image of $F$.  		
\end{prop}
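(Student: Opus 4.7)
The approach is to reduce Proposition~\ref{prop:irregular} to the preceding density-relaxation lemmas. The intuition is that condition~\eqref{eq:AB} only controls the \emph{averaged} deficit $|E(\ccA^{ijk})\sm E(\ccB^{ijk})|/|\cP^{ij}||\cP^{ik}||\cP^{jk}|$, so after discarding a few pathological triples of indices, $\ccB$ should inherit an almost-$\star$-density property from~$\ccA$, to which Lemma~\ref{lem:41} or Lemma~\ref{lem:42} (respectively, the definition of $\pird_\vvv(F)$ in the easier case $\star=\vvv$) applies.

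First I would apply the appropriate preceding result with $\eps$ replaced by $\eps/2$, obtaining some $m_0\in\NN$ and $\eta_0>0$ such that every $(\pird_\star(F)+\eps/2,\eta_0,\star)$-dense reduced hypergraph with $m_0$ indices contains a reduced image of~$F$. I would then set $\eps'=\eps\eta_0/4$ and pick $\delta$ so small that $m_0^3\delta/\eps'<1$, and take $m=m_0$.

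Given $\ccA$ and $\ccB$ as in the statement, call an index-triple $ijk\in I^{(3)}$ \emph{bad} if $|E(\ccA^{ijk})\sm E(\ccB^{ijk})|>\eps'|\cP^{ij}||\cP^{ik}||\cP^{jk}|$. By~\eqref{eq:AB} and Markov's inequality, the number of bad triples is at most $\delta|I|^3/\eps'$. Selecting a uniformly random subset $I'\subseteq I$ of size $m_0$, the expected number of bad triples contained in $I'^{(3)}$ is at most $(\delta/\eps')\binom{m_0}{3}<1$ by the choice of $\delta$. Hence some $I'$ contains no bad triples at all; restrict $\ccA$ and $\ccB$ to $I'$, obtaining reduced hypergraphs $\ccA'$ and $\ccB'$ on $m_0$ indices whose constituents differ by at most $\eps'|\cP^{ij}||\cP^{ik}||\cP^{jk}|$ edges.

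It remains to verify that $\ccB'$ is $(\pird_\star(F)+\eps/2,\eta_0,\star)$-dense. For $\star=\vvv$ this is immediate, since $e(\ccA'^{ijk})\ge(\pird_\vvv(F)+\eps)|\cP^{ij}||\cP^{ik}||\cP^{jk}|$ and the deletion is smaller than $(\eps/2)|\cP^{ij}||\cP^{ik}||\cP^{jk}|$. For $\star=\ev$ every $P^{ij}\in\cP^{ij}$ has vertex-degree at least $(\pird_\ev(F)+\eps)|\cP^{ik}||\cP^{jk}|$ in $\ccA'^{ijk}$, while the total degree loss when passing to $\ccB'^{ijk}$ is at most $\eps'|\cP^{ij}||\cP^{ik}||\cP^{jk}|$; so by Markov, at most a $2\eps'/\eps=\eta_0/2$ fraction of vertices $P^{ij}$ lose more than $(\eps/2)|\cP^{ik}||\cP^{jk}|$ in degree, which yields the desired $(\pird_\ev(F)+\eps/2,\eta_0,\ev)$-density. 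The case $\star=\ee$ is entirely analogous, with the exceptional pairs $(P^{ij},P^{ik})$ of Lemma~\ref{lem:42} playing the role of the exceptional vertices. Applying the appropriate reference lemma to $\ccB'$ then yields the desired reduced image of $F$ in $\ccB'$ and hence in $\ccB$.

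The main technical point is the Markov step for $\star\in\{\ev,\ee\}$, where one has to convert an $L^1$-bound on total deficit into an $L^\infty$-bound for all but an $\eta_0$-fraction of the vertices (resp.\ pairs) in each constituent; all the routine work lies in keeping the hierarchy $\delta\ll\eps'\ll\eta_0\ll\eps$ consistent so that every inequality goes through.
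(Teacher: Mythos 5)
Your proposal follows essentially the same route as the paper: you threshold the per-triple deficit to isolate a set of bad triples, use Markov together with a random $m_0$-element subset of the index set to pass to a sub-reduced-hypergraph spanning no bad triple, and then feed that sub-hypergraph into Lemma~\ref{lem:41} or~\ref{lem:42} (or, for $\star=\vvv$, directly into the definition of $\pird_\vvv$). The per-constituent Markov step converting the $L^1$ deficit bound $\eps'|\cP^{ij}||\cP^{ik}||\cP^{jk}|$ into an $L^\infty$ bound on degrees for all but an $\eta_0$-fraction of vertices (resp.\ pairs) matches what the paper does.

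One small arithmetical slip: when you pass from the bound $\delta|I|^3/\eps'$ on the total number of bad triples to the expected number landing in a random $m_0$-subset, the relevant probability per triple is $\binom{m_0}{3}/\binom{|I|}{3}$, and $|I|^3/\binom{|I|}{3}\approx 6$ (the paper uses the cruder $\binom{|I|}{3}>\tfrac17|I|^3$). So the expectation is only bounded by roughly $\tfrac{6\delta}{\eps'}\binom{m_0}{3}$, not $\tfrac{\delta}{\eps'}\binom{m_0}{3}$; your choice $m_0^3\delta/\eps'<1$ therefore needs to be tightened by a constant factor (e.g., $\delta<\eps'/(7m_0^3)$) so that the expectation drops below $1$. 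This is cosmetic and does not affect the structure of the argument.
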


\begin{proof}
	We work with the hierarchy
	\[
		\delta \ll \xi, m^{-1}\ll \eps\,.
	\]
	Call a triple $ijk\in I^{(3)}$ {\it useless} if 
	\[
		  E(\ccA^{ijk})\sm E(\ccB^{ijk})| > \xi |\cP^{ij}||\cP^{ik}||\cP^{jk}|\,.
	\]
	As a consequence of~\eqref{eq:AB} the number of such useless triples is at most 
	$\delta\xi^{-1}|I|^3$. Since~$|I|\ge m$ is sufficiently large, we have 
	$\binom{|I|}3 > \tfrac 17|I|^3$ and thus a proportion of no more than $7\delta\xi^{-1}$
	among all triples is useless. Therefore, if one draws a set $J\subseteq I$ with $|J|=m$
	uniformly at random, the expected number of useless triples in $J$ is at most 
	$7\delta\xi^{-1}\binom{m}3$, which by an appropriate choice of $\delta$ can be made  
	less than $1$.
	For this reason, there exists a set $J\subseteq I$ with $|J|=m$ spanning no useless 
	triple. We shall now prove that the restriction of $\ccB$ to $J$, denoted by $\ccB'$
	in the sequel, contains a reduced image of $F$. To this end we treat the three 
	cases $\star=\vvv, \,\ev,\, \ee$ separately. 
	
	\goodbreak
	\medskip
	
	{\it \hskip2em First Case. $\star=\vvv$}
	
	\smallskip
	
	Since $\xi\le \tfrac\eps2$, the reduced hypergraph $\ccB'$ is
	$\bigl(\pird_{\vvv}(F)+\tfrac\eps 2, \vvv\bigr)$-dense and thus it 
	does indeed contain a reduced image of $F$.
	
	\medskip
	
	{\it \hskip2em Second Case. $\star=\ev$}
	
	\smallskip
	
	Owing to Lemma~\ref{lem:41} it suffices to check that $\ccB'$ is 
	$\bigl(\pird_{\vvv}(F)+\tfrac\eps 2, 2\xi\eps^{-1}, \ev\bigr)$-dense.
	So let any three distinct indices $i, j, k\in J$ be given and let 
	$\cX^{ij}_k\subseteq\cP^{ij}$ denote the exceptional set of all vertices 
	$P^{ij}\in \cP^{ij}$ with 
	\[
		\big|\bigl\{(P^{ik}, P^{jk})\in \cP^{ik}\times \cP^{jk}\colon
				(P^{ij}, P^{ik}, P^{jk})\in E(\ccB^{ijk})\bigr\}\big|
		<
		\bigl(\pird_{\vvv}(F)+\tfrac\eps 2\bigr) |\cP^{ik}| |\cP^{jk}|\,.
	\]
	Since $\ccA$ is $(\pird_{\vvv}(F)+\eps, \ev)$-dense and $ijk$ is useful, 
	we have 
	\[
		|\cX^{ij}_k|\cdot \tfrac{\eps}2 	|\cP^{ik}| |\cP^{jk}|
		\le
		|E(\ccA^{ijk})\sm E(\ccB^{ijk})| 
		\le
		\xi |\cP^{ij}||\cP^{ik}||\cP^{jk}|\,,		
	\]
	which yields indeed $|\cX^{ij}_k|\le 2\xi\eps^{-1} |\cP^{ij}|$.

	\medskip
	
	{\it \hskip2em Third Case. $\star=\ee$}
	
	\smallskip
	
	Arguing as the previous case one proves that $\ccB'$ is 
	$\bigl(\pird_{\vvv}(F)+\tfrac\eps 2, 2\xi\eps^{-1}, \ee\bigr)$-dense,
	which  yields the desired conclusion in view of Lemma~\ref{lem:42}. 	
\end{proof}

\section{Hypergraph regularity}

The proof of Theorem~\ref{thm:33} can now be completed by means of the hypergraph 
regularity method, which for $3$-uniform hypergraphs is due to Frankl and R\"odl~\cite{FR}.
Our presentation below also takes the later works~\cites{RoSchRL, RoSchCL, Gow06, NPRS09}
into account. 

A central notion in this area is that of a hypergraph $H$ being regular with respect to 
a tripartite graph $P$, which roughly speaking means that the triangles in $P$ behave in an 
important way as if a random subset of them would correspond to edges of $H$.

\begin{dfn}
\label{def:reg}
A $3$-uniform hypergraph $H=(V,E_H)$ is {\it $(\delta_3,d_3)$-regular with respect to
a tripartite graph $P=(X\dcup Y\dcup Z,E_P)$} with $V\supseteq  X\cup Y\cup Z$ if 
for every tripartite subgraph~$Q\subseteq P$ we have 
\[
	\big||E_H\cap\cK_3(Q)|-d_3|\cK_3(Q)|\big|\leq \delta_3|\cK_3(P)|\,.
\]
Moreover, we simply say that {\it $H$ is $\delta_3$-regular with respect to $P$}, 
if it is $(\delta_3,d_3)$-regular for some $d_3\geq 0$.
We also define the {\it relative density} of $H$ with respect to $P$ by
\enlargethispage{2em}
\[
	d(H|P)=\frac{|E_H\cap\cK_3(P)|}{|\cK_3(P)|}\,,
\]
where we use the convention $d(H|P)=0$ if $\cK_3(P)=\emptyset$.
\end{dfn}

Now the hypergraph regularity lemma tells us that large hypergraphs can in the 
following approximate sense be decomposed into regular parts.

\begin{thm}[Regularity Lemma]
	\label{thm:RL}
	For every $\delta_3>0$, every $\delta_2\colon \NN \to (0,1]$, and every 
	$t_0\in\NN$ there exists an integer $T_0$ such that for every $n\geq t_0$
	and every $n$-vertex $3$-uniform hypergraph $H=(V,E_H)$ the following holds.
	
	There are integers $t\in[t_0, T_0]$ and $\ell \le T_0$,
	a vertex partition $V_0\dcup V_1\dcup\dots\dcup V_t=V$, 
	and for all $1\leq i<j\leq t$ there exists 
	a partition 
	\[
		\cP^{ij}=\{P^{ij}_\alpha=(V_i\dcup V_j,E^{ij}_\alpha)\colon\, 1\leq \alpha \leq \l\}
	\]
	of the edge set of the complete bipartite graph $K(V_i,V_j)$ satisfying the following 
	properties.
	\begin{enumerate}[label=\rmlabel]
		\item\label{it:RL1} $|V_0|\leq \delta_3 n$ and $|V_1|=\dots=|V_t|$;
		\item\label{it:RL2} for every $1\leq i<j\leq t$ and $\alpha\in [\l]$ the bipartite graph 
			$P^{ij}_\alpha$ is $(\delta_2(\l),1/\l)$-regular;
		\item\label{it:RL3} and $H$ is $\delta_3$-regular with respect to 
			$P^{ijk}_{\alpha\beta\gamma}$
			for all but at most $\delta_3t^3\l^3$ tripartite graphs 
			\begin{equation}\label{eq:triad}
				P^{ijk}_{\alpha\beta\gamma}=
				P^{ij}_\alpha\dcup P^{ik}_\beta\dcup P^{jk}_\gamma
				=(V_i\dcup V_j\dcup V_k, 
					E^{ij}_\alpha\dcup E^{ik}_{\beta}\dcup E^{jk}_{\gamma})
			\end{equation}
			with $1\leq i<j<k\leq t$ and $\alpha$, $\beta$, $\gamma\in[\l]$.
	\end{enumerate}
\end{thm}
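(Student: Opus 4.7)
The plan is to follow the iterative refinement strategy that underlies all versions of the regularity lemma, now applied to a three-level partition system (vertices, then bipartite graphs between pairs of vertex classes, then triads) and driven by a mean-square-density index.

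First I would set up the partition data to track. Call a \emph{partition system} an equipartition $V = V_0 \dcup V_1 \dcup \dots \dcup V_t$ with $|V_0| \le \delta_3 n$, together with, for each $1\le i<j\le t$, a partition $\cP^{ij}$ of $K(V_i,V_j)$ into bipartite graphs $P^{ij}_\alpha$ that are $(\delta_2(\l), 1/\l)$-regular. Given such a system $\cP$ and the hypergraph $H$, define the index
\[
\mathrm{ind}(\cP) = \frac{1}{t^3 \l^3}\sum_{1\le i<j<k\le t}\ \sum_{\alpha,\beta,\gamma\in[\l]}
d\bigl(H \mid P^{ijk}_{\alpha\beta\gamma}\bigr)^2,
\]
which lies in $[0,1]$.

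The main loop runs as follows. Start with any partition system meeting (i) and (ii); this can be obtained by first applying Szemer\'edi's regularity lemma to produce an equipartition and then applying the ``regularity lemma for $t$-partite graphs'', which simultaneously decomposes all $K(V_i,V_j)$ into $(\delta_2(\l),1/\l)$-regular bipartite pieces. Now check condition (iii). If it fails, then more than $\delta_3 t^3 \l^3$ triads $P^{ijk}_{\alpha\beta\gamma}$ are irregular, and each carries, by Definition~\ref{def:reg}, a witnessing tripartite subgraph $Q \subseteq P^{ijk}_{\alpha\beta\gamma}$ along which the density of $H$ deviates by at least $\delta_3 |\cK_3(P^{ijk}_{\alpha\beta\gamma})|$ from the expected count. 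I would use these witnesses to refine the bipartite partitions $\cP^{ij}$: a defect Cauchy--Schwarz computation adapted to triangle counts then shows that the resulting refined system $\cP'$ satisfies
\[
\mathrm{ind}(\cP') \ge \mathrm{ind}(\cP) + c(\delta_3)
\]
for some $c(\delta_3) > 0$ depending only on $\delta_3$. After this combinatorial refinement I would reapply the bipartite regularity lemma with a much smaller parameter to restore (ii) at a new, larger value of $\l$, and take a common refinement of the vertex partition to restore the equitability in (i), absorbing any leftover vertices into $V_0$.

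Since $\mathrm{ind}$ is bounded above by $1$, the loop terminates after at most $\lceil c(\delta_3)^{-1}\rceil$ iterations. Each iteration blows up $t$ and $\l$ by a tower-type function of the current parameters, so $T_0$ ultimately depends only on $\delta_3$, the function $\delta_2(\cdot)$, and $t_0$. The main obstacle is the interaction between the three levels: the refinement extracted from the irregularity witnesses initially destroys both the regularity of the bipartite pieces $P^{ij}_\alpha$ and the equitability of the vertex partition, and one must reinstate these properties without giving back the index gain. I would handle this by choosing $\delta_2(\cdot)$ to decay fast enough in its argument compared to the quantitative cost of re-regularisation, and by carrying out the defect Cauchy--Schwarz computation carefully so that the gain $c(\delta_3)$ depends only on the ``bad'' proportion $\delta_3$ of triads and not on the current values of $t$ or $\l$.
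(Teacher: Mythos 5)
The paper does not give a proof of this Regularity Lemma: it is stated as a known result, with the proof attributed to Frankl and R\"odl~\cite{FR} and later refined by R\"odl--Schacht, Gowers, and Nagle--Poerschke--R\"odl--Schacht (\cites{RoSchRL, RoSchCL, Gow06, NPRS09}). So there is no in-paper argument to compare yours against; what can be assessed is whether your sketch is a viable route to the cited result.

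Your outline has the right broad shape (a three-level partition system, an $L^2$-index on triads, refinement using witnesses of $\delta_3$-irregularity, and a defect Cauchy--Schwarz gain bounded below by a function of $\delta_3$ alone). But two points need correction or substantial elaboration. First, the sentence about ``choosing $\delta_2(\cdot)$ to decay fast enough'' has the quantifier the wrong way around: $\delta_2$ is universally quantified in the statement, so the proof must work for an \emph{arbitrary} given $\delta_2$. It is precisely the fact that $\delta_2(\l)$ may be required to be extremely small relative to $1/\l$ that makes the hypergraph regularity lemma so much deeper than a straightforward iteration of Szemer\'edi's lemma; you may not weaken this hypothesis. Second, the most delicate steps are passed over: after absorbing the irregularity witnesses, one must (a) re-regularize every one of the $\binom{t}{2}$ bipartite partitions so that they are \emph{all} partitions into the \emph{same} number $\l'$ of $(\delta_2(\l'),1/\l')$-regular pieces, and (b) simultaneously refine and re-equalize the vertex partition, all without losing the index increment. ``Refinement never decreases the index'' is a true and useful observation, but re-regularization via Szemer\'edi's lemma applied pair-by-pair does not immediately yield a common $\l'$ across all pairs, nor does restoring equitability of the vertex classes come for free. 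Managing this (Frankl--R\"odl's ``cleaning'' argument, or the regular approximation lemma of R\"odl--Schacht) is the heart of the proof, and the sketch would need to address it explicitly rather than deferring it to the choice of $\delta_2$.
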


The tripartite graphs occurring in~\eqref{eq:triad} are called {\it triads}.
In order to get a better feeling as to why (in our context) such a decomposition of 
a given hypergraph $H$
is a useful thing to have, it may be helpful to imagine the following special outcome. 
\begin{enumerate}[label=\alabel]
	\item\label{it:aa} $V_0=\varnothing$, i.e., the entire vertex set 
		gets partitioned;
	\item\label{it:bb} every edge of $H$ intersects each partition class $V_i$ at most once;
	\item\label{it:cc} there are no irregular triads, i.e.,~\ref{it:RL3} holds without any 
		exceptions;
	\item\label{it:dd} moreover, all triads are either ``full'' in the sense that all 
		their triangles 
		correspond to edges of $H$, or ``empty'' in the sense that none of their triangles 
		correspond to edges of $H$.   
\end{enumerate}

It is not hard to see that if these four things happen at the same time, then $H$ is 
essentially of the form constructed in the proof of Proposition~\ref{prop:1}. The underlying
reduced hypergraph~$\ccA$ on which such a construction would be based has index set $[t]$, 
vertex classes~$\cP^{ij}$, and the possible edges 
$\{P^{ij}_\alpha, P^{ik}_\beta, P^{jk}_\gamma\}$ in its constituents $\ccA^{ijk}$
would indicate which triads $P^{ijk}_{\alpha\beta\gamma}$ are ``full''. 
 
So in a vague sense what remains to be done for completing the proof of Theorem~\ref{thm:33}
is that we need to address how to deal with the possible failures of~\ref{it:aa}--\ref{it:dd}
when the regularity lemma gets applied. There will be no difficulties with~\ref{it:aa} 
or~\ref{it:bb}, for the concepts we study are sufficiently robust, so that deleting the 
small set $V_0$ for~\ref{it:aa} and ignoring the small proportion of noncrossing edges
for~\ref{it:bb} has essentially no effect. We are prepared for~\ref{it:cc} in the light of 
Proposition~\ref{prop:irregular}. 

Finally, regarding~\ref{it:dd} we will treat triads with respect 
to which the relative density $H$ is not too small as if they were full. 
That is, for some appropriate constant $d_3>0$ we will put an edge 
$\{P^{ij}_\alpha, P^{ik}_\beta, P^{jk}_\gamma\}$ into $\ccA^{ijk}$ if and only if 
$d(H|P^{ijk}_{\alpha\beta\gamma})\ge d_3$. This will allow us to rather easily transfer 
denseness properties from $H$ to $\ccA$, but we will need an argument as to why 
a reduced map from $F$ to $\ccA$ does still give rise to a copy of $F$ in $H$, 
even though the triads we want to use are not known to be full. This is, however,
a standard situation in hypergraph regularity theory, for which the {\it counting 
lemma} has been developed. Below we shall require the following consequence of this result.   
   
\begin{thm}[Embedding Lemma]\label{thm:EL}
	For every $3$-uniform hypergraph $F$ and every $d_3>0$ 
	there exist $\delta_3>0$, and functions 
	$\delta_2\colon \NN\longrightarrow(0,1]$ and  $N\colon \NN\longrightarrow\NN$
	such that the following holds for every $\l\in\NN$.
	
	Suppose that 
	\begin{enumerate} 
		\item[$\bullet$] $\lambda\colon V(F)\longrightarrow I$ is a map 
			from $V(F)$ to some set $I$ with $\lambda(u)\ne \lambda(v)$
			 for all $uv\in\partial F$,
		\item[$\bullet$] that $\{V_i\colon i\in I\}$ is a family of mutually disjoint 
			sets of the same size $N_*\ge N(\ell)$,
		\item[$\bullet$] and that for every $uv\in \partial F$ one has a 
			 $(\delta_2(\l),\frac{1}{\l})$-quasirandom bipartite graph
			 $P_{uv}$ between $V_{\lambda(u)}$ and $V_{\lambda(v)}$.
	\end{enumerate}
	
	Then a hypergraph $H$ with $V(H) \supseteq \bigcup_{i\in I}V_i$ posseses a 
	subhypergraph isomorphic to $F$ provided that for every edge $uvw\in E(F)$ 
	\begin{enumerate}
		\item[$\bullet$] one has 
			$d(H|P_{uv}\cup P_{uw}\cup P_{vw})\geq d_3$
		\item[$\bullet$] and $H$ is $\delta_3$-regular with respect to the tripartite 
			graph $P_{uv}\cup P_{uw}\cup P_{vw}$.
	\end{enumerate}
\end{thm}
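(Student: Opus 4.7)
The plan is to deduce the Embedding Lemma from the standard 3-uniform hypergraph counting lemma of Frankl and R\"odl~\cite{FR} (further refined in~\cite{NPRS09}), which in the partite setting of Theorem~\ref{thm:EL} guarantees, under the stated hypotheses, that the number of embeddings $\psi\colon V(F) \to V(H)$ satisfying $\psi(u) \in V_{\lambda(u)}$ for all $u \in V(F)$, $(\psi(u),\psi(v)) \in E(P_{uv})$ for all $uv \in \partial F$, and $\{\psi(u),\psi(v),\psi(w)\} \in E(H)$ for all $uvw \in E(F)$, is at least
\[
	\tfrac12\,d_3^{|E(F)|}\,\ell^{-|\partial F|}\,N_*^{|V(F)|}\,,
\]
provided that the parameter hierarchy $\delta_3 \ll d_3, |V(F)|^{-1}$, $\delta_2(\ell) \ll \delta_3\ell^{-|\partial F|}$ holds and $N_*$ is sufficiently large. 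Since this count is strictly positive, at least one such embedding exists, producing the desired copy of $F$ in $H$.

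I would establish the counting estimate by induction on $|V(F)|$. The base case $|V(F)| \le 1$ is trivial. For the inductive step I fix a vertex $v^* \in V(F)$, set $F' = F - v^*$, and apply the induction hypothesis to produce at least the claimed number of partial embeddings $\psi'\colon V(F') \to V(H)$. For each such $\psi'$ I then count extensions, i.e., vertices $\psi(v^*) \in V_{\lambda(v^*)}$ satisfying \emph{(a)} $(\psi'(u), \psi(v^*)) \in E(P_{uv^*})$ for every $uv^* \in \partial F$ and \emph{(b)} $\{\psi'(u), \psi'(w), \psi(v^*)\} \in E(H)$ for every hyperedge $uv^*w \in E(F)$. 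Iterated applications of the slicing lemma for $(\delta_2(\ell),1/\ell)$-quasirandom bipartite graphs show that for all but a $o(1)$ fraction of $\psi'$, the candidate set defined by \emph{(a)} has size at least $\tfrac12 \ell^{-\deg_{\partial F}(v^*)} N_*$. Next, the $\delta_3$-regularity of $H$ with respect to each triad $P_{uv^*} \cup P_{uw} \cup P_{wv^*}$ with $uv^*w \in E(F)$, combined with the density bound $d(H|\cdot) \geq d_3$, ensures that condition \emph{(b)} is inherited by a $d_3^{\deg_F(v^*)}$-fraction of the candidate set for all but a further $o(1)$ fraction of $\psi'$. Multiplying these estimates over all partial embeddings yields the claimed lower bound.

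The main obstacle is the quantitative accounting for the $o(1)$ fractions of bad $\psi'$: one must verify that after restricting to the common neighborhoods arising in the recursion---which are a mere $\ell^{-|\partial F|}$ factor of the ambient vertex classes---the bipartite graphs $P_{uv}$ remain quasirandom and $H$ remains regular with respect to the relevant triads, all with at most mildly degraded parameters. This is precisely why the hierarchy $\delta_2(\ell) \ll \delta_3 \ell^{-|\partial F|}$ is imposed: errors then compound only polynomially in $|V(F)|$ through the $|V(F)|$ inductive steps. All of this bookkeeping is routine in the hypergraph regularity literature and can be extracted directly from the counting lemma treatments in~\cites{FR, NPRS09, RoSchCL}.
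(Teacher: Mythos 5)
Your high-level approach matches the paper's: invoke the $3$-uniform counting lemma (the paper cites~\cite{NPRS09}*{Corollary~2.3}) to obtain at least $\tfrac12 d_3^{e(F)}\ell^{-|\partial F|}N_*^{|V(F)|}$ maps $\psi$ with $\psi(u)\in V_{\lambda(u)}$ respecting the graphs $P_{uv}$ and the hypergraph $H$, and then conclude that a copy of $F$ exists. However, there is a gap at the last step. What the counting lemma bounds from below is the number of \emph{homomorphisms}, not embeddings. Since the hypothesis only requires $\lambda(u)\ne\lambda(v)$ for $uv\in\partial F$, the map $\lambda$ need not be injective, and two vertices $u,v\in V(F)$ with $uv\notin\partial F$ and $\lambda(u)=\lambda(v)$ may well be sent by $\psi$ to the same vertex of $V_{\lambda(u)}$. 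A non-injective $\psi$ does not yield a subhypergraph of $H$ isomorphic to $F$. You therefore cannot conclude ``at least one such embedding exists'' merely from positivity of the count. The paper closes this gap explicitly: after noting that the homomorphism count generalises to non-injective $\lambda$, it increases $N(\ell)$ so that the lower bound $\tfrac12 d_3^{e(F)}\ell^{-|\partial F|}N_*^{|V(F)|}$ exceeds the number of non-injective maps $\psi$ with $\psi(u)\in V_{\lambda(u)}$ (which is $O(N_*^{|V(F)|-1})$), forcing an injective homomorphism and hence a genuine copy of $F$.

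Your second and third paragraphs, sketching a vertex-by-vertex inductive proof of the counting lemma itself, go beyond what the paper does (it simply cites the result). Two cautionary remarks there: the ``for all but a further $o(1)$ fraction of $\psi'$'' claim in the extension step is not an immediate consequence of the definition of $\delta_3$-regularity---that notion controls edge counts over tripartite \emph{subgraphs} of the triad, not over the thin slices obtained by fixing $\psi'(u)$ and $\psi'(w)$, so one needs a genuine Cauchy--Schwarz-type argument (or a stronger quasirandomness notion) to get typicality over the partial embeddings. This is precisely what makes the hypergraph counting lemma substantially harder than its graph counterpart, and why its standard proofs do not proceed by the straightforward extension you outline. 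Since you ultimately fall back on citing~\cites{FR,NPRS09,RoSchCL}, this is a presentational rather than logical issue, but the sketch as written understates the difficulty.
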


For completeness we shall briefly discuss how this statement relates to the 
standard reference~\cite{NPRS09}*{Corollary~2.3}. First of all, a more conventional 
setup for the counting lemma would be the case that $V(F)=I=[f]$ holds for some natural 
number $f$ and that $\lambda$ is the identity. Secondly, in this special case the full 
counting lemma allows to estimate the number of homomorphisms $\psi$ 
from $F$ to $H$ with $\psi(u)\in V_{\lambda(u)}$ for every $u\in V(F)$ in a satisfactory way. 
In particular, a suitable choice of $\delta_3$, $\delta_2(\cdot)$,
and $N(\cdot)$ entails that this number is at least 
$\frac12 d_3^{e(F)}\ell^{-|\partial F|}N_*^{|V(F)|}$. 
Thirdly, this assertion generalises immediately to the case of general $F$, $I$, and $\lambda$, 
even if $\lambda$ should fail to be injective. Finally, by increasing $N(\ell)$ if necessary, 
one can achieve that this lower bound on the number of homomorphisms from~$F$ to $H$ exceeds
the number of non-injective maps $\psi$ from $V(F)$ to $V(H)$ 
with $\psi(u)\in V_{\lambda(u)}$ for every $u\in V(F)$. Therefore,~\cite{NPRS09}*{Corollary~2.3}
does indeed imply Theorem~\ref{thm:EL}. 

We may now proceed to the second half of Theorem~\ref{thm:33}.

\begin{prop}\label{prop:3}
	If $F$ is a hypergraph and $\star\in\{\vvv, \ev, \ee\}$, then 
	$\pi_\star(F)\le \pird_\star(F)$.
\end{prop}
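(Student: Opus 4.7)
The plan is to use the hypergraph regularity method to transform a hypothetical $(d, \eta, \star)$-dense, $F$-free hypergraph $H$ into a $(d - O(\eps), \star)$-dense reduced hypergraph $\ccA$ essentially matching $H$; Proposition~\ref{prop:irregular} combined with the embedding lemma (Theorem~\ref{thm:EL}) will then force $d$ below $\pird_\star(F) + O(\eps)$. Fix $\eps>0$ and invoke Proposition~\ref{prop:irregular} with parameter $\eps/2$ to obtain a threshold $m$ and tolerance $\delta>0$. Set $d_3=\eps/4$ and feed $F$ and $d_3$ into Theorem~\ref{thm:EL} to get $\delta_3>0$, $\delta_2(\cdot)$, and $N(\cdot)$; shrink $\delta_3$ so that $\delta_3\le\delta$. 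Apply Theorem~\ref{thm:RL} with these $\delta_3$, $\delta_2$, and $t_0=m$ to obtain $T_0\in\NN$, and then pick $\eta>0$ so small and $n\in\NN$ so large (both in terms of $T_0$, $\eps$, and $N(T_0)$) that the error calculations below close. I will prove: for every $d>\pird_\star(F)+\tfrac{3\eps}{2}$, no $(d,\eta,\star)$-dense, $F$-free hypergraph on $\ge n$ vertices exists. This gives $\pi_\star(F)\le\pird_\star(F)+\tfrac{3\eps}{2}$, and letting $\eps\to0$ yields the claim.

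Fix such a $d$ and suppose, for contradiction, that a $(d,\eta,\star)$-dense, $F$-free hypergraph $H$ on $|V(H)|\ge n$ vertices exists. Applying Theorem~\ref{thm:RL} to $H$ produces a partition $V_0\dcup V_1\dcup\dots\dcup V_t=V(H)$ with $t\in[m,T_0]$ and, for each pair $i<j$ in $[t]$, a family $\cP^{ij}=\{P^{ij}_\alpha\colon 1\le\alpha\le\l\}$ of $(\delta_2(\l),1/\l)$-regular bipartite graphs partitioning $K(V_i,V_j)$. Define two reduced hypergraphs $\ccA$ and $\ccB$ on index set $I=[t]$ with vertex classes $\cP^{ij}$ by declaring $\{P^{ij}_\alpha,P^{ik}_\beta,P^{jk}_\gamma\}\in E(\ccA^{ijk})$ iff $d(H|P^{ijk}_{\alpha\beta\gamma})\ge d_3$, and $\{P^{ij}_\alpha,P^{ik}_\beta,P^{jk}_\gamma\}\in E(\ccB^{ijk})$ iff in addition $H$ is $\delta_3$-regular with respect to the triad $P^{ijk}_{\alpha\beta\gamma}$. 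Clause~\ref{it:RL3} of Theorem~\ref{thm:RL} bounds the number of irregular triads by $\delta_3 t^3\l^3$, so the aggregate deficit in~\eqref{eq:AB} is at most $\delta_3 t^3\le\delta|I|^3$.

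The heart of the argument is to show that $\ccA$ is $(d-\eps,\star)$-dense. For every triple $i,j,k\in I$ the triangle counting lemma applied to the $(\delta_2(\l),1/\l)$-regular bipartite blocks gives $|\cK_3(P^{ijk}_{\alpha\beta\gamma})|=(1\pm o(1))|V_i||V_j||V_k|/\l^3$. When $\star=\vvv$, apply the $(d,\eta,\vvv)$-density of $H$ to $A=V_i$, $B=V_j$, $C=V_k$; combined with the identity $|E_\vvv(V_i,V_j,V_k)|=\sum_{\alpha\beta\gamma}d(H|P^{ijk}_{\alpha\beta\gamma})\,|\cK_3(P^{ijk}_{\alpha\beta\gamma})|$ this yields $\sum_{\alpha\beta\gamma}d(H|P^{ijk}_{\alpha\beta\gamma})\ge(d-\eps/2)\l^3$, and splitting the sum at the threshold $d_3$ gives $e(\ccA^{ijk})\ge(d-\eps)\l^3$. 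When $\star=\ev$, fix $P^{ij}_\alpha$ and test on $A=V_k$ and $P=\{(x,y)\in V_i\times V_j\colon xy\in E(P^{ij}_\alpha)\}$; quasirandomness of $P^{ij}_\alpha$ controls $|P|$, and the analogous splitting bounds the $\ev$-degree of $P^{ij}_\alpha$ in $\ccA^{ijk}$ from below by $(d-\eps)\l^2$. When $\star=\ee$, fix the pair $(P^{ij}_\alpha,P^{ik}_\beta)$ and test on $P=\{(y,x)\colon xy\in E(P^{ij}_\alpha)\}$, $Q=\{(x,z)\colon xz\in E(P^{ik}_\beta)\}$; the triangle counting lemma applied to $P^{ij}_\alpha\cup P^{ik}_\beta\cup K(V_j,V_k)$ estimates $|\cK_{\ee}(P,Q)|=(1\pm o(1))|V_i||V_j||V_k|/\l^2$, and the same splitting gives the pair-degree bound $(d-\eps)\l$.

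With $\ccA$ now $(d-\eps,\star)$-dense and hence $(\pird_\star(F)+\eps/2,\star)$-dense, $|I|=t\ge m$, and the edge-deficit bound~\eqref{eq:AB} verified, Proposition~\ref{prop:irregular} supplies a reduced map $(\lambda,\phi)$ from $F$ into $\ccB$. For every edge $uvw\in E(F)$ the triad $\phi(uv)\cup\phi(uw)\cup\phi(vw)$ is by construction of $\ccB$ both $\delta_3$-regular with respect to $H$ and of relative density at least $d_3$. Thus Theorem~\ref{thm:EL} applied with this $\lambda$ and $P_{uv}:=\phi(uv)$ (using that $|V_{\lambda(u)}|\ge n/(2T_0)\ge N(T_0)\ge N(\l)$) exhibits an embedding of $F$ into $H$, contradicting the $F$-freeness of $H$. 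The principal obstacle is the density-transfer step: the three notions call for different test objects $A$, $P$, $Q$, and one must chase the error terms against a standard hierarchy $\eta\ll\delta_2(\l)\ll\delta_3\ll d_3,m^{-1},\eps$; everything else reduces to assembling previously established inputs.
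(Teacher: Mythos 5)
Your proposal is correct and follows essentially the same route as the paper's own proof: invoke Proposition~\ref{prop:irregular} to fix $m$ and a deletion tolerance, feed a small relative-density threshold $d_3$ into the embedding lemma to get the regularity parameters, run the regularity lemma, build the two reduced hypergraphs $\ccA$ and $\ccB$ by thresholding at $d_3$ and additionally requiring $\delta_3$-regularity, transfer the $(d,\eta,\star)$-density of $H$ to $(d-O(\eps),\star)$-density of $\ccA$ via the triangle/graph counting lemma, and finish by chaining Proposition~\ref{prop:irregular} with Theorem~\ref{thm:EL}. The only differences from the paper are cosmetic: your constants ($d_3=\eps/4$, Proposition~\ref{prop:irregular} at $\eps/2$) versus the paper's $\eps/7$ family, your contrapositive phrasing versus the paper's direct assertion that dense hypergraphs contain $F$, and in the $\ev$/$\ee$ density-transfer steps you fix the vertex in a different one of the three classes (an immaterial relabelling).
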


\begin{proof}
	We may suppose $\pird_\star(F)<1$, since otherwise the result is clear.   
	Let an arbitrary $\eps\in (0, 1-\pird_\star(F)]$ be given. 
	By plugging $F$ and $d_3=\frac17\eps$ into 
	Theorem~\ref{thm:EL} we obtain $\delta_3>0$ and functions 
	$\delta_2\colon \NN\longrightarrow (0, 1]$ as well as 
	$N\colon \NN\longrightarrow \NN$. Without loss of generality, we may suppose that 
	$\delta_3<\tfrac12$ is sufficiently small, 
	that $\delta_2(\ell)\le \frac1{21}\eps\ell^{-3}$ holds for every $\ell\in \NN$ 
	and that~$N$ is increasing.
	By Proposition~\ref{prop:irregular} and our flexibility to decrease $\delta_3$  
	we may assume that there exists $t_0\in \NN$ such that if for arbitrary $t\ge t_0$ 
	and $\ell\ge 1$ 
	one deletes deletes at most $\delta_3t^3\ell^3$ edges from a 
	$\bigl(\pird_\star(F)+\tfrac17\eps, \star\bigr)$-dense
	reduced hypergraph with index set $[t]$ whose vertex classes have size $\ell$, 
	then the resulting reduced hypergraph contains a reduced image of $F$. With this choice 
	of $\delta_3$, $\delta_2(\cdot)$, and $t_0$ we appeal to the regularity lemma, 
	thus getting an integer $T_0$. Finally, we set 
	\[
		n_0=2T_0N(T_0)
		\quad \text{ and } \quad
		\eta=\frac{\eps}{56T_0^5}\,.
	\]

	Now we contend that every $(\pird_\star(F)+\eps, \eta, \star)$-dense hypergraph $H$
	on $n\ge n_0$ vertices has a subhypergraph isomorphic to $F$, which clearly implies 
	the desired result. 
	
	Suppose that the regularity lemma applied to $H$ yields the integers 
	$t\in [t_0, T_0]$ and $\ell\leq T_0$, the vertex partition 
	$V(H)=V_0\dcup V_1\dcup\dots\dcup V_t$ and for $1\leq i<j\leq t$ the
	pair partition
	\[
		\cP^{ij}=\{P^{ij}_\alpha=(V_i\dcup V_j,E^{ij}_\alpha)\colon\, 1\leq \alpha \leq \l\}
	\]
	of $K(V_i,V_j)$ such that~\ref{it:RL1},~\ref{it:RL2}, and~\ref{it:RL3} hold.
	
	This situation gives rise to two reduced hypergraphs $\ccA$ and $\ccB$
	with index set $[t]$ and vertex classes $\cP^{ij}$ for $ij\in [t]^{(2)}$
	defined as follows. A triple $\{P^{ij}_\alpha, P^{ik}_\beta, P^{jk}_\gamma\}$
	is declared to form an edge of the constituent $\ccA^{ijk}$ if the corresponding triad
	$P^{ijk}_{\alpha\beta\gamma}$ satisfies $d(H|P^{ijk}_{\alpha\beta\gamma})\ge d_3$.
	If in addition $H$ is $\delta_3$-regular with respect to this triad, then we put this 
	edge into $\ccB^{ijk}$ as well. We shall verify later that 
	\begin{equation}\label{eq:52}
		\ccA \text{ is $\bigl(\pird_\star(F)+\tfrac17\eps, \star\bigr)$-dense}.
	\end{equation}

	Based on this fact, the argument can be completed as follows. 
	By Theorem~\ref{thm:RL}\ref{it:RL3} we have 
	\[
		|E(\ccA)\sm E(\ccB)|\le \delta_3t^3\ell^3\,, 
	\]
	so due to our choice of $t_0$ according to Proposition~\ref{prop:irregular} there is a 
	reduced map $(\lambda, \phi)$ from~$F$ to $\ccB$. Now the embedding lemma applies to 
	$I=[t]$, $\lambda$, the sets $V_i$ for $i\in I$, and the bipartite graphs called 
	$\phi(uv)\in \cP^{\lambda(u)\lambda(v)}$ here playing the r\^{o}les of  
	$P_{uv}$ there. The lower bound imposed there on the sets $V_i$
	follows from 
	\[
		|V_i|=\frac{|V|-|V_0|}{t}\ge \frac{(1-\delta_3)n}{T_0}\ge \frac{n_0}{2T_0}
		= N(T_0)\ge N(\ell)\,,
	\]
	for every $i\in [t]$.
	Moreover, $H$ satisfies the last two bullets of Theorem~\ref{thm:EL} by 
	Definition~\ref{dfn:rm}\ref{it:rm3} and the construction of $\ccB$. 
	So altogether we obtain indeed $F\subseteq H$ and it remains to establish~\eqref{eq:52}.	
	
	A key observation towards this goal is that for $M=|V_1|=\ldots=|V_t|$
	every triad	spans at most $\bigl(\ell^{-3}+3\delta_2(\ell)\bigr)M^3$
	triangles due to the triangle counting lemma, and because of 
	$\delta_2(\ell)\le \frac1{21}\eps\ell^{-3}$ this is turn at most 
	$(1+\tfrac17\eps)\ell^{-3}M^3$. So by our choice of $d_3$ a triad that does not correspond
	to an edge of $\ccA$ can accomodate at most $\tfrac17\eps(1+\tfrac17\eps)\ell^{-3}M^3$
	edges of $H$.
	
	Furthermore, it will be helpful to be aware that 
	our choice of $\eta$ guarantees
	\[
		\eta n^3 = \frac{\eps}{7T_0^2}\left(\frac n{2T_0}\right)^3 
		\le \frac{\eps M^3}{7\ell^2}\,.
	\]

	From now on we treat the three possibilities 
	for $\star$ separately. 
	
	\medskip
	
	{\it \hskip2em First Case. $\star=\vvv$}
	
	\smallskip
	
	Given any three distinct indices $i, j, k\in [t]$ we need to prove 
	$|E(\ccA^{ijk})|\ge \bigl(\pird_{\vvv}(F)+\frac17\eps\bigr)\ell^3$. 
	Applying the assumption that $H$ is 
	$(\pird_{\vvv}(F)+\eps, \eta, \vvv)$-dense to $V_i$, $V_j$, and $V_k$ we obtain
	\[
		|E_{\vvv}(V_i, V_j, V_k)|\ge \bigl(\pird_{\vvv}(F)+\eps\bigr)M^3-\eta n^3
		\ge 
		\bigl(\pird_{\vvv}(F)+\tfrac 67\eps\bigr)M^3\,.
	\]
	Counting the edges of the left side according to the triad to which 
	they belong we obtain 
	\[
		 \bigl(\pird_{\vvv}(F)+\tfrac 67\eps\bigr)M^3
		 \le 
		 \bigl(|E(\ccA^{ijk})|+\tfrac17\eps\ell^3\bigr)
		 	\bigl(1+\tfrac17\eps\bigr)\ell^{-3}M^3\,.
	\]
	Owing to 
	\[
		\bigl(\pird_{\vvv}(F)+\tfrac 47\eps\bigr)\bigl(1+\tfrac17\eps\bigr)
		\le 
		\pird_{\vvv}(F)+\tfrac 67\eps
	\]
	this yields 
	\[
		\bigl(\pird_{\vvv}(F)+\tfrac37\eps\bigr)\ell^3
		\le 	
		|E(\ccA^{ijk})|\,,
	\]
	which is more than required.
	
	\goodbreak
	\medskip
	
	{\it \hskip2em Second Case. $\star=\ev$}
	
	\smallskip
	
	Consider three distinct indices $i, j, k\in [t]$, a bipartite graph 
	$P^{jk}_\gamma\in \cP^{jk}$, and its neighbourhood 
	\[
		N=\bigl\{(P^{ij}_\alpha, P^{ik}_\beta)\in \cP^{ij}\times \cP^{ik}\colon
			\{P^{ij}_\alpha, P^{ik}_\beta, P^{jk}_\gamma\}\in E(\ccA^{ijk})\bigr\}
	\]
	in the constituent $\ccA^{ijk}$. Observe that 
	\[
		|E(P^{jk}_\gamma)| \ge \bigl(\ell^{-1} -\delta_2(\ell)\bigr)M^2
		\ge \bigl(1-\tfrac 17\eps\bigr)\ell^{-1}M^2\,.
	\]
	Since $H$ is $(\pird_{\ev}(F)+\eps, \eta, \ev)$-dense,
	this yields  
	\begin{align*}
		|E_{\ev}(V_i, P^{jk}_\gamma)|
		&\ge 
		\bigl(\pird_{\ev}(F)+\eps\bigr)M|E(P^{jk}_\gamma)|-\eta n^3 \\
		&\ge 
		\bigl(\pird_{\ev}(F)+\eps\bigr)(1-\tfrac 17\eps)\ell^{-1}M^3
			-\tfrac17\eps\ell^{-1}M^3 \\
		&\ge
		\bigl(\pird_{\ev}(F)+\tfrac47\eps\bigr)\ell^{-1}M^3\,,
	\end{align*}
	where we have identified $P^{jk}_\gamma$ in the natural way with a subset of 
	$V_j\times V_k$.
	As in the previous case this leads to 
	\[
		\bigl(\pird_{\ev}(F)+\tfrac47\eps\bigr)\ell^{-1}M^3
		\le
		\bigl(N+\tfrac17\eps\ell^2\bigr)
		 	\bigl(1+\tfrac17\eps\bigr)\ell^{-3}M^3\,,
	\]		
	which in turn implies 
	\[
		\bigl(\pird_{\ev}(F)+\tfrac17\eps\bigr)\ell^{2}	
		\le
		N\,.
	\]
	Thus $\ccA$ is indeed $\bigl(\pird_{\ev}(F)+\tfrac17\eps, \ev\bigr)$-dense.
	
	\goodbreak
	\medskip
	
	{\it \hskip2em Third Case. $\star=\ee$}
	
	\smallskip
	
	This time let three distinct indices $i, j, k\in [t]$ as well as two bipartite graphs
	$P^{ij}_\alpha\in \cP^{ij}$ and $P^{jk}_\gamma\in\cP^{jk}$ be given, which we identify
	with the corresponding subsets of $V_i\times V_j$ and~$V_j\times V_k$, respectively.
	The graph counting lemma implies 
	\[
		\cK_{\ee}(P^{ij}_\alpha, P^{jk}_\gamma)
		\ge 
		\bigl(\ell^{-2}-2\delta_2(\ell)\bigr)M^3
		\ge
		 \bigl(1-\tfrac 17\eps\bigr)\ell^{-2}M^3
	\]
	and it follows from $H$ being $(\pird_{\ee}(F)+\eps, \eta, \ee)$-dense 
	that 
	\begin{align*}
		\big|E_{\ee}(P^{ij}_\alpha, P^{jk}_\gamma)\big|
		&\ge
		\bigl(\pird_{\ee}(F)+\eps\bigr)\big|\cK_{\ee}(P^{ij}_\alpha, P^{jk}_\gamma)\big|
		-\eta n^3 \\
		&\ge
		\bigl(\pird_{\ee}(F)+\eps\bigr)\bigl(1-\tfrac 17\eps\bigr)\ell^{-2}M^3	
		-\tfrac17\eps\ell^{-2}M^3 \\
		& \ge
		\bigl(\pird_{\ee}(F)+\tfrac47\eps\bigr)\ell^{-2}M^3
	\end{align*}
	Regarding the common neighbourhood
	\[
		J=\bigl\{P^{ik}_\beta\in  \cP^{ik}\colon
				(P^{ij}_\alpha, P^{ik}_\beta, P^{jk}_\gamma)\in E(\ccA^{ijk})\bigr\}
	\]
	this tells us
	\[	
		\bigl(\pird_{\ee}(F)+\tfrac47\eps\bigr)\ell^{-2}M^3
		\le
		\bigl(|J|+\tfrac17\ell\eps\bigr)\bigl(1+\tfrac17\eps\bigr)\ell^{-3}M^3\,,
	\]
	which yields
	\[
		\bigl(\pird_{\ee}(F)+\tfrac17\eps\bigr)\ell \le |J|\,,
	\]
	as desired.
\end{proof}

\section{More on tetrahedra}
              
In order to illustrate how Theorem~\ref{thm:33} can be applied we conclude this article
by sketching a proof of $\piee(K^{(3)}_4)=0$. This result forms the first interesting 
case of Theorem~\ref{thm:fortress} and the reader seeking further information or more details 
is referred to~\cite{RRS-d}.

Given $\eps>0$ we want to show that every $(\eps, \ee)$-dense reduced hypergraph 
with sufficiently many indices contains the reduced image of a tetrahedron. Let $\ccA$
be such a reduced hypergraph with index set $I$, vertex classes $\cP^{ij}$, 
and constituents $\ccA^{ijk}$. Write $I$ as a disjoint union $I=X\dcup Y$, 
where $|X|>\tfrac 1\eps$ and $Y$ is much larger then $X$. 

The first step is to assign to every pair $(x, y)\in X\times Y$ an arbitrary 
vertex $P^{xy}\in \cP^{xy}$. 

Next we look at two distinct vertices $x, x'\in X$. 
For every $y\in Y$ the common neighbourhood of $P^{xy}$ and $P^{x'y}$ in the 
constituent $\ccA^{xx'y}$ contains, by our hypothesis on $\ccA$, at least $\eps|\cP^{xx'}|$
vertices. Thus, by double counting, we may fix a vertex $P^{xx'}\in \cP^{xx'}$ belonging to 
this neighbourhood for at least~$\eps|Y|$ many choices of $y\in Y$. In other words, we may 
shrink~$Y$ by a factor of no more than $\eps$ to a subset $Y'$ such that $P^{xx'}P^{xy}P^{x'y}$ 
is edge of $\ccA$ for every $y\in Y'$. 

This argument can be applied iteratively to all pairs of vertices 
in $X$. That is, we enumerate all pairs in $X^{(2)}$ and when processing a pair 
in the list we select a vertex from the corresponding vertex class and shrink the subset
of $Y$ under current consideration by a further factor of $\eps$. When this procedure 
ends, we have chosen for every pair $xx'\in X^{(2)}$ a vertex~$P^{xx'}\in \cP^{xx'}$. 
Moreover, if $Y^*$ denotes the subset of $Y$ that has survived through all stages, then 
$\{P^{xx'}, P^{xy}, P^{x'y}\}\in E(\ccA^{xx'y})$ holds for all distinct $x, x'\in X$ 
and all $y\in Y^*$.   
  
By starting with a sufficiently large set $Y$ we can ensure that $|Y^*|\ge 2$.
Pick once and for all two distinct indices $y, y'\in Y^*$. 
Reversing the r\^{o}les of $X$ and $Y$ 
we may now select a suitable vertex~$P^{yy'}$ in~$\cP^{yy'}$ and shrink $X$ in the same way 
as above to a set $X^*$ with $|X^*|\ge \eps|X|$ such that 
$P^{xy}P^{xy'}P^{yy'}\in E(\ccA^{xyy'})$ holds for all $x\in X^*$. 
Due to $|X|>\tfrac 1\eps$ there will be at least two survivors $x$ and $x'$ in $X^*$.
 
Now the four indices $x$, $x'$, $y$, and $y'$ form together with the six vertices
$P^{xx'}$, $P^{xy}$, $P^{xy'}$, $P^{x'y}$, $P^{x'y'}$, and $P^{yy'}$ the desired 
reduced image of a tetrahedron in $\ccA$.

It should be clear that the same argument also establishes $\pi_{\ee}(B)=0$
for every bipartite hypergraph $B$. There are, however, many further hypergraphs
whose $\ee$-Tur\'an-density vanishes. For instance, as a consequence of Theorem~\ref{thm:zero}
the Fano plane $\ccF$ satisfies~${\pivvv(\ccF)=0}$ and, hence, also~$\piee(\ccF)=0$.  
We shall return to the rather subtle problem of characterising the set $\{F\colon \piee(F)=0\}$
at another occasion.  
       
\subsection*{Acknowledgement} For numerous reasons going much beyond our 
collaboration~\cites{RRS-a, RRS-c, RRS-d, RRS-e, RRS-zero} my indebtedness 
to Vojt\v{e}ch R\"{o}dl and Mathias Schacht is extremely great.

\begin{bibdiv}
\begin{biblist}
 
\bib{ELAD}{article}{
	author={Aigner-Horev, Elad},
	author={Levy, Gil}, 
	title={Tight Hamilton cycles in cherry quasirandom 3-uniform hypergraphs}, 
	eprint={1712.00186}
}

\bib{AS}{book}{
   author={Alon, Noga},
   author={Spencer, Joel H.},
   title={The probabilistic method},
   series={Wiley-Interscience Series in Discrete Mathematics and
   Optimization},
   edition={3},
   note={With an appendix on the life and work of Paul Erd\H{o}s},
   publisher={John Wiley \& Sons, Inc., Hoboken, NJ},
   date={2008},
   pages={xviii+352},
   isbn={978-0-470-17020-5},
   review={\MR{2437651}},
   doi={10.1002/9780470277331},
}
	 
\bib{BaTa11}{article}{
   author={Baber, Rahil},
   author={Talbot, John},
   title={Hypergraphs do jump},
   journal={Combin. Probab. Comput.},
   volume={20},
   date={2011},
   number={2},
   pages={161--171},
   issn={0963-5483},
   review={\MR{2769186 (2012g:05166)}},
   doi={10.1017/S0963548310000222},
}
 
\bib{BR}{article}{
	author={Bellmann, Louis},
	author={Reiher, Chr.}, 
	title={Tur\'an's Theorem for the Fano plane}, 
	eprint={1804.07673},
	note={Submitted}}
	
\bib{CG83}{article}{
   author={Chung, F. R. K.},
   author={Graham, R. L.},
   title={Edge-colored complete graphs with precisely colored subgraphs},
   journal={Combinatorica},
   volume={3},
   date={1983},
   number={3-4},
   pages={315--324},
   issn={0209-9683},
   review={\MR{729784 (85g:05107)}},
   doi={10.1007/BF02579187},
}

\bib{DeFu00}{article}{
   author={De Caen, Dominique},
   author={F\"uredi, Zolt\'an},
   title={The maximum size of 3-uniform hypergraphs not containing a Fano
   plane},
   journal={J. Combin. Theory Ser. B},
   volume={78},
   date={2000},
   number={2},
   pages={274--276},
   issn={0095-8956},
   review={\MR{1750899}},
   doi={10.1006/jctb.1999.1938},
}

\bib{Er77}{article}{
   author={Erd{\H{o}}s, Paul},
   title={Paul Tur\'an, 1910--1976: his work in graph theory},
   journal={J. Graph Theory},
   volume={1},
   date={1977},
   number={2},
   pages={97--101},
   issn={0364-9024},
   review={\MR{0441657 (56 \#61)}},
}

\bib{Er90}{article}{
   author={Erd{\H{o}}s, Paul},
   title={Problems and results on graphs and hypergraphs: similarities and
   differences},
   conference={
      title={Mathematics of Ramsey theory},
   },
   book={
      series={Algorithms Combin.},
      volume={5},
      publisher={Springer, Berlin},
   },
   date={1990},
   pages={12--28},
   review={\MR{1083590}},
}

\bib{ErHa72}{article}{
   author={Erd{\H{o}}s, P.},
   author={Hajnal, A.},
   title={On Ramsey like theorems. Problems and results},
   conference={
      title={Combinatorics (Proc. Conf. Combinatorial Math., Math. Inst.,
      Oxford, 1972)},
   },
   book={
      publisher={Inst. Math. Appl., Southend-on-Sea},
   },
   date={1972},
   pages={123--140},
   review={\MR{0337636 (49 \#2405)}},
}

\bib{ErSi66}{article}{
   author={Erd{\H{o}}s, P.},
   author={Simonovits, M.},
   title={A limit theorem in graph theory},
   journal={Studia Sci. Math. Hungar},
   volume={1},
   date={1966},
   pages={51--57},
   issn={0081-6906},
   review={\MR{0205876 (34 \#5702)}},
}

\bib{ErSo82}{article}{
   author={Erd{\H{o}}s, P.},
   author={S{\'o}s, Vera T.},
   title={On Ramsey-Tur\'an type theorems for hypergraphs},
   journal={Combinatorica},
   volume={2},
   date={1982},
   number={3},
   pages={289--295},
   issn={0209-9683},
   review={\MR{698654 (85d:05185)}},
   doi={10.1007/BF02579235},
}

\bib{ErSt46}{article}{
   author={Erd{\H{o}}s, P.},
   author={Stone, A. H.},
   title={On the structure of linear graphs},
   journal={Bull. Amer. Math. Soc.},
   volume={52},
   date={1946},
   pages={1087--1091},
   issn={0002-9904},
   review={\MR{0018807 (8,333b)}},
}

\bib{FrFu84}{article}{
   author={Frankl, P.},
   author={F{\"u}redi, Z.},
   title={An exact result for $3$-graphs},
   journal={Discrete Math.},
   volume={50},
   date={1984},
   number={2-3},
   pages={323--328},
   issn={0012-365X},
   review={\MR{753720 (85k:05063)}},
   doi={10.1016/0012-365X(84)90058-X},
}

\bib{FR}{article}{
   author={Frankl, Peter},
   author={R{\"o}dl, Vojt{\v{e}}ch},
   title={Extremal problems on set systems},
   journal={Random Structures Algorithms},
   volume={20},
   date={2002},
   number={2},
   pages={131--164},
   issn={1042-9832},
   review={\MR{1884430 (2002m:05192)}},
   doi={10.1002/rsa.10017.abs},
}

\bib{FuSi05}{article}{
   author={F\"uredi, Zolt\'an},
   author={Simonovits, Mikl\'os},
   title={Triple systems not containing a Fano configuration},
   journal={Combin. Probab. Comput.},
   volume={14},
   date={2005},
   number={4},
   pages={467--484},
   issn={0963-5483},
   review={\MR{2160414}},
   doi={10.1017/S0963548305006784},
}

\bib{GKV}{article}{
   author={Glebov, Roman},
   author={Kr\'a{\v{l}}, Daniel},
   author={Volec, Jan},
   title={A problem of Erd\H os and S\'os on 3-graphs},
   journal={Israel J. Math.},
   volume={211},
   date={2016},
   number={1},
   pages={349--366},
   issn={0021-2172},
   review={\MR{3474967}},
   doi={10.1007/s11856-015-1267-4},
}

\bib{Gow06}{article}{
   author={Gowers, W. T.},
   title={Quasirandomness, counting and regularity for 3-uniform
   hypergraphs},
   journal={Combin. Probab. Comput.},
   volume={15},
   date={2006},
   number={1-2},
   pages={143--184},
   issn={0963-5483},
   review={\MR{2195580 (2008b:05175)}},
   doi={10.1017/S0963548305007236},
}

\bib{KNS64}{article}{
   author={Katona, Gyula},
   author={Nemetz, Tibor},
   author={Simonovits, Mikl\'os},
   title={On a problem of Tur\'an in the theory of graphs},
   language={Hungarian, with Russian and English summaries},
   journal={Mat. Lapok},
   volume={15},
   date={1964},
   pages={228--238},
   issn={0025-519X},
   review={\MR{0172263}},
}

\bib{Ke11}{article}{
   author={Keevash, Peter},
   title={Hypergraph Tur\'an problems},
   conference={
      title={Surveys in combinatorics 2011},
   },
   book={
      series={London Math. Soc. Lecture Note Ser.},
      volume={392},
      publisher={Cambridge Univ. Press, Cambridge},
   },
   date={2011},
   pages={83--139},
   review={\MR{2866732}},
}

\bib{KeSu05}{article}{
   author={Keevash, Peter},
   author={Sudakov, Benny},
   title={The Tur\'an number of the Fano plane},
   journal={Combinatorica},
   volume={25},
   date={2005},
   number={5},
   pages={561--574},
   issn={0209-9683},
   review={\MR{2176425}},
   doi={10.1007/s00493-005-0034-2},
}

\bib{KRS02}{article}{
   author={Kohayakawa, Yoshiharu},
   author={R{\"o}dl, Vojt{\v{e}}ch},
   author={Skokan, Jozef},
   title={Hypergraphs, quasi-randomness, and conditions for regularity},
   journal={J. Combin. Theory Ser. A},
   volume={97},
   date={2002},
   number={2},
   pages={307--352},
   issn={0097-3165},
   review={\MR{1883869 (2003b:05112)}},
   doi={10.1006/jcta.2001.3217},
}

\bib{NPRS09}{article}{
   author={Nagle, Brendan},
   author={Poerschke, Annika},
   author={R{\"o}dl, Vojt{\v{e}}ch},
   author={Schacht, Mathias},
   title={Hypergraph regularity and quasi-randomness},
   conference={
      title={Proceedings of the Twentieth Annual ACM-SIAM Symposium on
      Discrete Algorithms},
   },
   book={
      publisher={SIAM, Philadelphia, PA},
   },
   date={2009},
   pages={227--235},
   review={\MR{2809322}},
}

\bib{Ra07}{article}{
   author={Razborov, Alexander A.},
   title={Flag algebras},
   journal={J. Symbolic Logic},
   volume={72},
   date={2007},
   number={4},
   pages={1239--1282},
   issn={0022-4812},
   review={\MR{2371204 (2008j:03040)}},
   doi={10.2178/jsl/1203350785},
}

\bib{Ra10}{article}{
   author={Razborov, Alexander A.},
   title={On 3-hypergraphs with forbidden 4-vertex configurations},
   journal={SIAM J. Discrete Math.},
   volume={24},
   date={2010},
   number={3},
   pages={946--963},
   issn={0895-4801},
   review={\MR{2680226 (2011k:05171)}},
   doi={10.1137/090747476},
}

\bib{RRS-a}{article}{
   author={Reiher, Chr.},
   author={R{\"o}dl, V.},
   author={Schacht, M.},
   title={On a Tur\'an problem in weakly quasirandom 3-uniform hypergraphs},
   journal={Journal of the European Mathematical Society (JEMS)},
   volume={20},
   date={2018},
   number={5},
   pages={1139--1159},
   issn={1435-9855},
   review={\MR{3790065}},
   doi={10.4171/JEMS/784},
}

\bib{RRS-c}{article}{
   author={Reiher, Chr.},
   author={R{\"o}dl, V.},
   author={Schacht, M.},
   title={Embedding tetrahedra into quasirandom hypergraphs},
   journal={J. Combin. Theory Ser. B},
   volume={121},
   date={2016},
   pages={229--247},
   issn={0095-8956},
   review={\MR{3548293}},
   doi={10.1016/j.jctb.2016.06.008},
}

\bib{RRS-d}{article}{
   author={Reiher, Chr.},
   author={R{\"o}dl, V.},
   author={Schacht, M.},
   title={Some remarks on $\piee$},
   conference={
      title={Connections in discrete mathematics},
   },
   book={
      publisher={Cambridge Univ. Press, Cambridge},
   },
   date={2018},
   pages={214--239},
   review={\MR{3821841}},
}

\bib{RRS-e}{article}{
   author={Reiher, Chr.},
   author={R{\"o}dl, V.},
   author={Schacht, M.},
   title={On a generalisation of Mantel's Theorem to Uniformly Dense
   Hypergraphs},
   journal={Int. Math. Res. Not. IMRN},
   date={2018},
   number={16},
   pages={4899--4941},
   issn={1073-7928},
   review={\MR{3848224}},
   doi={10.1093/imrn/rnx017},
}

\bib{RRS-zero}{article}{
   author={Reiher, Chr.},
   author={R{\"o}dl, V.},
   author={Schacht, M.},
   title={Hypergraphs with vanishing Tur\'an density in uniformly dense
   hypergraphs},
   journal={Journal of the London Mathematical Society (2)},
   volume={97},
   date={2018},
   number={1},
   pages={77--97},
   issn={0024-6107},
   review={\MR{3764068}},
   doi={10.1112/jlms.12095},
}

\bib{Ro86}{article}{
   author={R{\"o}dl, Vojt{\v{e}}ch},
   title={On universality of graphs with uniformly distributed edges},
   journal={Discrete Math.},
   volume={59},
   date={1986},
   number={1-2},
   pages={125--134},
   issn={0012-365X},
   review={\MR{837962 (88b:05098)}},
   doi={10.1016/0012-365X(86)90076-2},
}

\bib{RoSchRL}{article}{
   author={R{\"o}dl, Vojt{\v{e}}ch},
   author={Schacht, Mathias},
   title={Regular partitions of hypergraphs: regularity lemmas},
   journal={Combin. Probab. Comput.},
   volume={16},
   date={2007},
   number={6},
   pages={833--885},
   issn={0963-5483},
   review={\MR{2351688 (2008h:05083)}},
}
		
\bib{RoSchCL}{article}{
   author={R{\"o}dl, Vojt{\v{e}}ch},
   author={Schacht, Mathias},
   title={Regular partitions of hypergraphs: counting lemmas},
   journal={Combin. Probab. Comput.},
   volume={16},
   date={2007},
   number={6},
   pages={887--901},
   issn={0963-5483},
   review={\MR{2351689 (2008j:05238)}},
}

\bib{Si68}{article}{
   author={Simonovits, M.},
   title={A method for solving extremal problems in graph theory, stability
   problems},
   conference={
      title={Theory of Graphs},
      address={Proc. Colloq., Tihany},
      date={1966},
   },
   book={
      publisher={Academic Press, New York},
   },
   date={1968},
   pages={279--319},
   review={\MR{0233735}},
}

\bib{Tu41}{article}{
   author={Tur{\'a}n, Paul},
   title={Eine Extremalaufgabe aus der Graphentheorie},
   language={Hungarian, with German summary},
   journal={Mat. Fiz. Lapok},
   volume={48},
   date={1941},
   pages={436--452},
   review={\MR{0018405 (8,284j)}},
}

\end{biblist}
\end{bibdiv}

\end{document}